\newtheorem{theorem}{Theorem}[subsection]
\theoremstyle{definition}
\newtheorem{definition}[theorem]{Definition}
\newtheorem{lemma}[theorem]{Lemma}
\newtheorem{proposition}[theorem]{Proposition}
\newtheorem{example}[theorem]{Example}
\newtheorem{remark}[theorem]{Remark}
\begin{document}
\author{Sawsan Khaskeia\\Department of Mathematics\\Ariel University, Israel\\sawsan@ariel.ac.il\and Robert Shwartz\\Department of Mathematics\\Ariel University, Israel\\robertsh@ariel.ac.il}
\date{}
\title{Generalization of the basis theorem for the $B$-type Coxeter groups}
\maketitle
    
  \begin{abstract}
The $OGS$ for  non-abelian groups is an interesting
generalization of the basis of finite abelian groups. The definition of $OGS$ states that every element of a group has a unique presentation as a product of some powers of specific generators of the group, in a specific given order. In case of the symmetric groups $S_{n}$ there is a paper of R. Shwartz, which demonstrates a strong connection between the $OGS$ and the standard Coxeter presentation of the symmetric group, which is called the standard $OGS$ of $S_n$. In this paper we generalize the standard $OGS$ of $S_n$  to the finite classical Coxeter group $B_n$. We describe the exchange laws for the generalized standard $OGS$ of $B_n$, and we connect it to the Coxeter length and the descent set of $B_n$. 
\end{abstract}

\section{Introduction}
The fundamental theorem of finitely generated abelian groups states the following:
Let $A$ be a finitely generated abelian group, then there exists generators $a_{1}, a_{2}, \ldots a_{n}$, such that every element $a$ in $A$ has a unique presentation of a form:
$$g=a_{1}^{i_{1}}\cdot a_{2}^{i_{2}}\cdots a_{n}^{i_{n}},$$
where, $i_{1}, i_{2}, \ldots, i_{n}$ are $n$ integers such that for  $1\leq k\leq n$, $0\leq i_{k}<|g_{k}|$, where $a_{k}$ has a finite order of $|a_{k}|$ in $A$, and $i_{k}\in \mathbb{Z}$, where $a_{k}$ has infinite order in $A$.
Where, the meaning of the theorem is that every abelian group $A$ is direct sum of finitely many cyclic subgroup $A_{i}$ (where $1\leq i\leq k$), for some $k\in \mathbb{N}$.

\begin{definition}\label{ogs}
Let $G$ be a non-abelian group. The ordered sequence of $n$ elements $\langle g_{1}, g_{2}, \ldots, g_{n}\rangle$ is called an $Ordered ~~ Generating ~~ System$ of the group $G$ or by shortened notation, $OGS(G)$, if every element $g\in G$ has a unique presentation in the a form
$$g=g_{1}^{i_{1}}\cdot g_{2}^{i_{2}}\cdots g_{n}^{i_{n}},$$
where, $i_{1}, i_{2}, \ldots, i_{n}$ are $n$ integers such that for  $1\leq k\leq n$, $0\leq i_{k}<r_{k}$, where  $r_{k} | |g_{k}|$  in case the order of $g_{k}$ is finite in $G$, or   $i_{k}\in \mathbb{Z}$, in case  $g_{k}$ has infinite order in $G$.
The mentioned canonical form is called $OGS$ canonical form.  For every $q>p$, $1\leq x_{q}<r_{q}$, and $1\leq x_{p}<r_{p}$ the relation
$$g_{q}^{x_{q}}\cdot g_{p}^{x_{p}} = g_{1}^{i_{1}}\cdot g_{2}^{i_{2}}\cdots g_{n}^{i_{n}},$$
is called exchange law.
\end{definition}
In contrast to finitely generated abelian groups, the existence of an $OGS$ is generally not true for every finitely generated non-abelian group. Even in case of two-generated infinite non-abelian groups it is not difficult to find counter examples. For example, the Baumslag-Solitar groups $BS(m,n)$ \cite{BS}, where $m\neq \pm1$ or $n\neq \pm1$, or most of the cases of the one-relator free product of a finite cyclic group generated by $a$, with a finite two-generated group generated by $b, c$ with the relation $a^{2}\cdot b\cdot a\cdot c=1$ \cite{S}, do not have an $OGS$. Even the question of the existence of an $OGS$ for a general finite non-abelian group is still open. Moreover, contrary to the abelian case where the exchange law is just $g_{q}\cdot g_{p}=g_{p}\cdot g_{q}$, in most of the cases of non-abelian groups with the existence of an $OGS$, the exchange laws are very complicated.  Although there are some specific non-abelian groups where the exchange laws are very convenient and have very interesting properties. A very good example of it this the symmetric group $S_{n}$. In 2001, Adin and Roichman \cite{AR} introduced a presentation of an $OGS$ canonical form  for the symmetric group $S_n$, for the hyperoctahedral group $B_n$, and for the wreath product $\mathbb{Z}_{m}\wr S_{n}$. Adin and Roichman proved that for every element of $S_n$ presented in the standard $OGS$ canonical form, the sum of the exponents of the $OGS$ equals the major-index of the permutation. Moreover, by using an $OGS$ canonical form, Adin and Roichman generalized the theorem of MacMahon \cite{Mac} to the $B$-type Coxeter group, and to the wreath product $\mathbb{Z}_{m}\wr S_{n}$. A few years later, this $OGS$ canonical form was generalized for complex reflection groups by Shwartz, Adin and Roichman \cite{SAR}. Recently, Shwartz \cite{S1} significantly extended the results of \cite{AR}, \cite{SAR}, where the $OGS$ of $S_{n}$ is strongly connected to the Coxeter length and to the descent set of the elements. Moreover, in \cite{S1}, there are described the exchange laws for the $OGS$ canonical forms of the symmetric group $S_n$, which have very interesting and surprising properties.
In the paper we try to generalize the results of \cite{S1} to the finite classical Coxeter group $B_{n}$. Similarly to the symmetric group $S_n$, the group $B_n$ can be considered as permutation group  as well. 
Therefore, we recall the notations of permutations,  the $OGS$ of $S_{n}$ and the corresponding exchange laws, from \cite{S1}.\\

\begin{definition}\label{sn}
Let $S_n$ be the symmetric group on $n$ elements, then :
\begin{itemize}
\item The symmetric group $S_n$ is an $n-1$ generated simply-laced finite Coxeter group  of order $n!$,  which has the presentation of: $$\langle s_1, s_2, \ldots, s_{n-1} | s_i^{2}=1, ~~ (s_i\cdot s_{i+1})^{3}=1, ~~(s_i\cdot s_j)^2=1 ~~for ~~|i-j|\geq 2\rangle;$$
\item The group $S_n$ can be considered as the permutation group on $n$ elements. A permutation $\pi\in S_n$ is denoted by 
$$
\pi=[\pi(1); ~\pi(2); \ldots; ~\pi(n)]
 $$
(i.e., $\pi=
[2; ~4; ~1; ~3]$ is a permutation in $S_{4}$ which satisfies $\pi(1)=2$, $\pi(2)=4$, $\pi(3)=1$, and $\pi(4)=3$);
\item Every permutation $\pi\in S_n$ can be presented in a cyclic notation, as a product of disjoint cycles of the form $(i_1, ~i_2, ~\ldots, ~i_m)$, which means $\pi(i_{k})=i_{k+1}$, for $1\leq k\leq m-1$, and $\pi(i_{m})=i_{1}$
    (i.e., The cyclic notation of $\pi=
[3; ~4; ~1; ~5; ~2]$ in $S_5$, is $(1, ~3)(2, ~4, ~5)$);
\item The Coxeter generator $s_i$ can be considered the permutation which exchanges the element $i$ with the element $i+1$, i.e., the transposition $(i, i+1)$;
\item We consider multiplication of permutations in left to right order; i.e., for every $\pi_1, \pi_2\in S_n$, $\pi_1\cdot \pi_2 (i)=\pi_2(j)$, where, $\pi_1(i)=j$ (in contrary to the notation in \cite{AR} where Adin, Roichman, and other people have considered right to left multiplication of permutations);
\item For every permutation $\pi\in S_n$, the Coxeter length $\ell(\pi)$ is the number of inversions in $\pi$, i.e., the number of different pairs $i, j$, s. t. $i<j$ and $\pi(i)>\pi(j)$;
\item For every permutation $\pi\in S_n$, the set of the locations of the descents is defined to be $$des\left(\pi\right)=\{1\leq i\leq n-1 | \pi(i)>\pi(i+1)\},$$ and $$i\in des\left(\pi\right) ~~if ~and ~only ~if ~~\ell(s_i\cdot \pi)<\ell(\pi)$$ (i.e., $i$ is a descent of $\pi$ if and only if multiplying $\pi$ by $s_i$ in the left side shortens the Coxeter length of the element.);
\item For every permutation $\pi\in S_n$, the major-index is defined to be  $$maj\left(\pi\right)=\sum_{\pi(i)>\pi(i+1)}i$$ (i.e., major-index is the sum of the locations of the descents of $\pi$.).
\item By \cite{BB} Chapter 3.4, every element $\pi$ of $S_n$ can be presented uniquely in the following normal reduced form, which we denote by $norm(\pi)$:
$$norm(\pi)=\prod_{u=1}^{n-1}\prod_{r=0}^{y_{u}-1}s_{u-r}.$$
such that $y_u$ is a non-negative integer where, $0\leq y_u\leq u$ for every $1\leq u\leq n-1$.
By our notation of $norm(\pi)$ the Coxeter length of an element $\pi$ as follow: $$\ell(\pi)=\sum_{u=1}^{n-1}y_{u}.$$
For example: Let  $m=8$, $y_2=2$, $y_4=3$, $y_5=1$, $y_8=4$, and $y_1=y_3=y_6=y_7=0$, then
$$norm(\pi)=(s_2\cdot s_1)\cdot (s_4\cdot s_3\cdot s_2)\cdot s_5\cdot (s_8\cdot s_7\cdot s_6\cdot s_5).$$
$$\ell(\pi)=2+3+1+4=10.$$
\end{itemize}
\end{definition}

\begin{theorem}\label{canonical-sn}
Let $S_n$ be the symmetric group on $n$ elements. For every $2\leq m\leq n$, define $t_{m}$ to be the product $\prod_{j=1}^{m-1}s_{j}$. The element $t_{m}$ is the permutation $$t_m=
[m; ~1; ~2; \ldots; ~m-1; ~m+1;\ldots; ~n]$$
 which is the $m$-cycle $(m, ~m-1, ~\ldots, ~1)$ in the cyclic notation of the permutation. Then, the elements $t_{n}, t_{n-1}, \ldots, t_{2}$ generates $S_n$, and every element of $S_n$ has a unique presentation in the following $OGS$ canonical form:

$$t_{2}^{i_{2}}\cdot t_{3}^{i_{3}}\cdots t_{n}^{i_{n}},~~~ where ~~~0\leq i_{k}<k ~~~for ~~~2\leq k\leq n$$
\end{theorem}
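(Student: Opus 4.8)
The plan is to establish the claimed $OGS$ canonical form for $S_n$ by a counting argument combined with an explicit algorithm for producing the presentation. First I would observe that the number of $n$-tuples $(i_2, i_3, \ldots, i_n)$ with $0 \le i_k < k$ is exactly $2 \cdot 3 \cdots n = n!$, which equals $|S_n|$. Hence it suffices to prove existence of such a presentation for every $\pi \in S_n$; uniqueness then follows automatically from the pigeonhole principle, since a surjection between finite sets of equal cardinality is a bijection. This reduces the theorem to showing that the $t_m$ generate $S_n$ and that every permutation can be written in the prescribed form.

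For the existence half, I would argue by induction on $n$ (equivalently, by a normalization procedure that strips off one factor at a time from the right). Given $\pi \in S_n$, consider $\pi^{-1}(n)$, or more convenient for left-to-right multiplication, track where the letter $n$ sits; the key computation is that $t_n^{i}$ acts as the cycle sending $n \mapsto n-i$ (indices mod $n$ in the appropriate range), so there is a unique $0 \le i_n < n$ such that $\pi \cdot t_n^{-i_n}$ — or $t_n^{i_n}$ composed suitably with $\pi$ — fixes $n$ and therefore lies in the parabolic subgroup $S_{n-1} = \langle s_1, \ldots, s_{n-2}\rangle$. One must check carefully, using the left-to-right multiplication convention fixed in Definition \ref{sn}, that multiplying by a power of $t_n$ on the correct side removes exactly the $n$-dependence while leaving a genuine element of $S_{n-1}$; then the induction hypothesis supplies $t_2^{i_2} \cdots t_{n-1}^{i_{n-1}}$ for that element, and reassembling gives the canonical form for $\pi$. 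Since $S_2 = \langle t_2 \rangle$ is the base case, this also proves that $t_2, \ldots, t_n$ generate.

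The main obstacle I anticipate is bookkeeping with the conventions rather than any deep difficulty: one must pin down precisely how $t_m^{i}$ permutes $\{1, \ldots, m\}$ (it is the $i$-th power of the $m$-cycle $(m, m-1, \ldots, 1)$, so $t_m^i(j) = j - i$ when $j > i$ and $t_m^i(j) = j - i + m$ when $j \le i$), verify that the factor $t_n^{i_n}$ can be chosen to land the whole product in $S_{n-1}$, and confirm that the leftover factors $t_2^{i_2} \cdots t_{n-1}^{i_{n-1}}$ — which a priori are built from $t_m$ for $m \le n-1$ — indeed fix $n$ and coincide with the $OGS$ of the projected permutation in $S_{n-1}$. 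Because the left-to-right convention here is the reverse of \cite{AR}, some care is needed to make sure the exponent ranges come out as $0 \le i_k < k$ and not, say, reversed; once that is checked the argument is complete.

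Alternatively, instead of the abstract counting step one can make uniqueness explicit: extract $i_n$ as a function of $\pi$ (namely from the position of $n$, or from $\pi(1)$ depending on the convention), show it is forced, peel it off, and recurse, so that the map from tuples to $S_n$ is shown directly to be injective; combined with the cardinality count this again yields a bijection. I would present whichever of the two is cleaner in the chosen convention, and I expect the explicit peeling argument to be the one that generalizes smoothly to $B_n$ later in the paper.
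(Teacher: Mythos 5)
Your argument is correct and is essentially the paper's own approach: the paper states Theorem \ref{canonical-sn} as a recalled result from \cite{S1}, and the proof it gives for the $B_n$ analogue (Theorem \ref{ogs-bn}) is exactly your peeling/induction argument — right cosets of the parabolic subgroup fixing $n$, with the powers $t_n^{i_n}$ (resp. $\tau_n^{i_n}$) serving as coset representatives distinguished by the image of $n$, and uniqueness following from the coset count. Your auxiliary counting observation ($2\cdot3\cdots n=n!$) is a harmless alternative way to get uniqueness from existence, and your formula for the action of $t_m^{i}$ is consistent with the paper's conventions.
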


\begin{proposition}\label{exchange}
The following holds:
\\

In order to transform  the element $t_{q}^{i_{q}}\cdot t_{p}^{i_{p}}$  ($p<q$) onto the $OGS$ canonical form\\ $t_{2}^{i_{2}}\cdot t_{3}^{i_{3}}\cdots t_{n}^{i_{n}}$, i.e., according to the  standard $OGS$, one needs to use the following exchange laws:

 \[ t_{q}^{i_{q}}\cdot t_{p}^{i_{p}}=\begin{cases}
t_{i_{q}+i_{p}}^{i_q}\cdot t_{p+i_{q}}^{i_{p}}\cdot t_{q}^{i_{q}}  & q-i_{q}\geq p \\
\\
t_{i_{q}}^{p+i_{q}-q}\cdot t_{i_{q}+i_{p}}^{q-p}\cdot t_{q}^{i_{q}+i_{p}} & i_{p}\leq q-i_{q}\leq p \\
\\
t_{p+i_{q}-q}^{i_{q}+i_{p}-q}\cdot t_{i_{q}}^{p-i_{p}}\cdot t_{q}^{i_{q}+i_{p}-p}  & q-i_{q}\leq i_{p}.
\end{cases}
\]

\end{proposition}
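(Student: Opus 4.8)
The plan is to work entirely inside the permutation representation of $S_n$, where $t_m$ is the $m$-cycle $(m,\,m-1,\,\ldots,\,1)$ acting on $\{1,\ldots,n\}$ and fixing $\{m+1,\ldots,n\}$. First I would record the explicit one-line form of a power $t_m^{a}$: for $0\le a<m$, $t_m^{a}$ sends $j\mapsto j-a \pmod m$ on $\{1,\ldots,m\}$ (with residues taken in $\{1,\ldots,m\}$) and fixes everything larger than $m$. Concretely $t_m^a = [\,m-a+1;\ m-a+2;\ \ldots;\ m;\ 1;\ 2;\ \ldots;\ m-a;\ m+1;\ \ldots;\ n\,]$. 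With this in hand, composing $t_q^{i_q}\cdot t_p^{i_p}$ (recall: left-to-right multiplication, so first apply $t_q^{i_q}$, then $t_p^{i_p}$) becomes an explicit piecewise computation of where each $j\in\{1,\ldots,n\}$ goes, and the three cases in the statement are exactly the three ways the interval $\{1,\ldots,p\}$ (the support of $t_p^{i_p}$) sits relative to the image blocks of $t_q^{i_q}$.

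The core of the argument is a direct verification that both sides of each claimed identity define the same permutation. For a fixed case, say $q-i_q\ge p$, I would: (i) compute $\pi:=t_q^{i_q}\cdot t_p^{i_p}$ as a one-line word, noting that $t_q^{i_q}$ maps $\{1,\ldots,p\}$ into $\{q-i_q+1,\ldots,q\}$ when $i_q$ is large or into $\{1,\ldots,q-i_q\}\cup\ldots$ depending on the subcase, so that $t_p^{i_p}$ acts only on the part of the image lying in $\{1,\ldots,p\}$; (ii) compute the right-hand product $t_{i_q+i_p}^{i_q}\cdot t_{p+i_q}^{i_p}\cdot t_q^{i_q}$ the same way; (iii) check equality of the two one-line forms entrywise. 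Since each $t_m^a$ is "piecewise translation by a constant modulo a block length," the comparison reduces to a finite check on each of a bounded number of sub-intervals of $\{1,\ldots,n\}$, together with the arithmetic identities on the subscripts and exponents (e.g. that $i_q+i_p<i_q+i_p$-block-size, so the exponents shown are genuine reduced $OGS$ exponents in the range $0\le i_k<k$). I would also verify the boundary consistency: at $q-i_q=p$ the first and second formulas must agree, and at $q-i_q=i_p$ the second and third must agree, which is a useful sanity check and also handles the overlap in the (non-strict) inequalities defining the cases.

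The main obstacle I anticipate is purely bookkeeping: tracking the residues modulo the various block sizes ($q$, $p$, $p+i_q$, $i_q+i_p$, $i_q$, etc.) without sign or off-by-one errors, and making sure the case hypotheses guarantee each new subscript genuinely lies in the valid range $2\le m\le n$ and each new exponent in $[0,m)$ so that the right-hand side really is (a product of factors that can be assembled into) the $OGS$ canonical form via Proposition~\ref{exchange} applied recursively — strictly speaking the right-hand sides here are not yet in canonical form when the three subscripts are not in increasing order, so a final remark is needed that one then iterates the exchange law on the out-of-order adjacent pair, and that this process terminates (e.g. because the largest subscript $q$ with nonzero exponent strictly decreases, or by an induction on $q-p$). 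I would close by noting that uniqueness of the resulting canonical form is already guaranteed by Theorem~\ref{canonical-sn}, so it suffices to exhibit \emph{one} rewriting, which is what the displayed formulas do.
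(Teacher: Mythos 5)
Your plan --- direct entrywise verification in the permutation representation, using the one-line form of $t_m^a$ and the paper's left-to-right composition convention, then checking that the subscripts and exponents on the right-hand side are legitimate --- is sound, and it is essentially the technique this paper itself employs when it proves the analogous exchange laws for $B_n$ (tracking the images of $1, r_q, r_q+1,\dots$ under both sides); note that for Proposition~\ref{exchange} itself the paper gives no proof, recalling the statement from \cite{S1}. One correction to your closing concern: no iteration of the exchange law and no termination argument are needed, because in each of the three cases the right-hand side is already (weakly) ordered and in range --- in the first case $i_q+i_p<p+i_q\le q$ since $i_p<p$ and $q-i_q\ge p$, in the second $i_q<i_q+i_p\le q$ since $i_p\ge 1$ and $q-i_q\ge i_p$, and in the third $p+i_q-q<i_q<q$ since $p<q$ --- with the exponents lying in the required intervals; the only degeneracies are the boundary equalities $q-i_q=p$ and $q-i_q=i_p$, where a factor becomes trivial or two factors share a subscript and merge, which is exactly the content of Remark~\ref{exchange-2}. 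Your appeal to the uniqueness of the canonical form (Theorem~\ref{canonical-sn}) to justify that exhibiting one valid rewriting suffices is fine.
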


\begin{remark}\label{exchange-2}
The standard $OGS$ canonical form of $t_{q}^{i_{q}}\cdot t_{p}^{i_{p}}$ is a product of non-zero powers of two different canonical generators if and only if $q-i_{q}=p$ or $q-i_{q}=i_{p}$, as follow:
\begin{itemize}
    \item If $q-i_q=p$ then by considering  $q-i_q\geq p$: $$t_{i_{q}+i_{p}}^{i_q}\cdot t_{p+i_{q}}^{i_{p}}\cdot t_{q}^{i_{q}}=t_{i_{q}+i_{p}}^{i_q}\cdot t_{q}^{i_p}\cdot t_{q}^{i_q}$$ and by considering  $q-i_q\leq p$:  $$t_{i_{q}}^{p+i_{q}-q}\cdot t_{i_{q}+i_{p}}^{q-p}\cdot t_{q}^{i_{q}+i_{p}}=t_{i_q}^{0}\cdot t_{i_q+i_p}^{i_q}\cdot t_{q}^{i_{q}+i_{p}};$$
    \item If $q-i_q=i_p$ then by considering  $q-i_q\geq i_p$: $$t_{i_q}^{p+i_q-q}\cdot t_{i_{q}+i_{p}}^{q-p}\cdot t_{q}^{i_{q}+i_{p}}=t_{i_q}^{p-i_p}\cdot t_q^{q-p}\cdot t_q^q=t_{i_q}^{p-i_p}\cdot t_q^{q-p}$$ and by considering  $q-i_q\leq i_p$: $$t_{p+i_{q}-q}^{i_{q}+i_{p}-q}\cdot t_{i_{q}}^{p-i_{p}}\cdot t_{q}^{i_{q}+i_{p}-p}=t_{p+i_q-q}^0\cdot t_{i_{q}}^{p-i_{p}}\cdot t_{q}^{q-p}.$$
\end{itemize}
Hence we have
 \[ t_{q}^{i_{q}}\cdot t_{p}^{i_{p}}=\begin{cases}
  t_{i_{q}+i_{p}}^{i_q}\cdot t_{q}^{i_{q}+i_{p}} & q-i_{q} = p \\
\\
  t_{i_{q}}^{p-i_{p}}\cdot t_{q}^{q-p} & q-i_{q} = i_{p}.
\end{cases}
\]
\end{remark}

Moreover, the most significant achievement of the paper \cite{S1} is the definition of the standard $OGS$ elementary factorization. By using the standard $OGS$ elementary factorization, it is possible to give a very interesting formula for the Coxeter length and a complete classification of the descent set of any arbitrary element of $S_n$. In the paper we try to generalize the standard $OGS$ elementary factorization to  the $B$-type Coxeter groups, in order to find similar properties (Coxeter length and the descent set) for the elements of the group.
Hence, we recall the definition of the standard $OGS$ elementary element and factorization for the symmetric group $S_n$ as it is defined in \cite{S1}, and theorems concerning the Coxeter length and the descent set of elements of $S_n$ as it is mentioned and proved in \cite{S1}  .

\begin{definition}\label{elementary}
Let $\pi\in S_n$, where $\pi=\prod_{j=1}^{m}t_{k_{j}}^{i_{k_{j}}}$ is presented in the standard $OGS$ canonical form, with $i_{k_{j}}>0$ for every $1\leq j\leq m$. Then, $\pi$ is called standard $OGS$ elementary element of $S_n$, if
$$\sum_{j=1}^{m}i_{k_{j}}\leq k_{1}.$$
\end{definition}

\begin{theorem}\label{theorem-elementary}\cite{S1}
Let $\pi=\prod_{j=1}^{m}t_{k_{j}}^{i_{k_{j}}}$ be a standard $OGS$ elementary element of $S_n$, presented in the standard $OGS$ canonical form, with $i_{k_{j}}>0$ for every $1\leq j\leq m$. Then, the following are satisfied:
 \begin{itemize}
\item $$\ell(\pi)=\sum_{j=1}^{m}k_{j}\cdot i_{k_{j}}-(i_{k_{1}}+i_{k_{2}}+\cdots +i_{k_{m}})^{2}=\sum_{j=1}^{m}k_{j}\cdot i_{k_{j}}-\left(maj\left(\pi\right)\right)^{2};$$
\item Every subword of $\pi$ is a standard $OGS$ elementary element too. In particular, for every two subwords $\pi_{1}$ and $\pi_{2}$ of $\pi$, such that $\pi=\pi_{1}\cdot \pi_{2}$, it is satisfied: $$\ell(\pi)=\ell(\pi_{1}\cdot \pi_{2})<\ell(\pi_{1})+\ell(\pi_{2});$$
\item $$\ell(s_r\cdot \pi)=\begin{cases} \ell(\pi)-1 & r=\sum_{j=1}^{m}i_{k_{j}} \\
\ell(\pi)+1 & r\neq \sum_{j=1}^{m}i_{k_{j}} \end{cases}.$$
i.e., $des\left(\pi\right)$ contains just one element, which means $des\left(\pi\right)=\{maj\left(\pi\right)\}$.
\end{itemize}
\end{theorem}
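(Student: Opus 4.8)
The plan is to compute $\pi$ explicitly as a permutation and then read all three claims off its one‑line notation. As a first step I record that, as a permutation, $t_k^i$ acts by $p\mapsto k-i+p$ for $1\le p\le i$, by $p\mapsto p-i$ for $i+1\le p\le k$, and fixes every $p>k$; equivalently $t_k^i$ is the cyclic shift by $i$ on $\{1,\dots,k\}$, extended by the identity outside. Now write $\pi=\prod_{j=1}^{m}t_{k_j}^{i_{k_j}}$ in canonical form with $k_1<\dots<k_m$, $i_{k_j}>0$ and $d:=\sum_{j}i_{k_j}\le k_1$ (the case $d=0$, i.e.\ $\pi=e$, being trivial), and set $I_0=0$ and $I_\ell=\sum_{j\le\ell}i_{k_j}$, so $I_m=d$. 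The heart of the argument is the claim that
\[
\pi(p)=k_{j}-d+p\qquad\text{whenever }I_{j-1}<p\le I_{j}\quad(1\le p\le d),
\]
while $\pi$ restricted to the positions $d+1,\dots,n$ is the increasing rearrangement of $\{1,\dots,n\}\setminus\{\pi(1),\dots,\pi(d)\}$.

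I would prove the claim by induction on $m$, the base case $m=1$ being exactly the shift description of $t_{k_1}^{i_{k_1}}$. For the inductive step write $\pi=\pi'\cdot t_{k_m}^{i_{k_m}}$ with $\pi'=\prod_{j<m}t_{k_j}^{i_{k_j}}$; since $d':=d-i_{k_m}<d\le k_1$, the element $\pi'$ is again elementary, the inductive hypothesis applies to it, and moreover $\pi'$ fixes every position $>k_{m-1}$. Using $\pi(p)=t_{k_m}^{i_{k_m}}\bigl(\pi'(p)\bigr)$ one then splits into three ranges of $p$. For $p\le d'$, the value $\pi'(p)=k_{j(p)}-d'+p$ satisfies $i_{k_m}<\pi'(p)<k_m$ (the lower bound is where $k_1\ge d$ enters), so $t_{k_m}^{i_{k_m}}$ merely subtracts $i_{k_m}$ and returns $k_{j(p)}-d+p$. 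For $d'<p\le d$ one shows $\pi'(p)=p-d'$, i.e.\ that the first $i_{k_m}$ entries of the second block of $\pi'$ are precisely $1,2,\dots,i_{k_m}$ — this is the step where $d\le k_1$ is genuinely used, since the first block of $\pi'$ lies inside $[\,k_1-d'+1,\ k_{m-1}\,]$ and $i_{k_m}=d-d'\le k_1-d'$ forces $1,\dots,i_{k_m}$ to be missing from it — and then $t_{k_m}^{i_{k_m}}(p-d')=k_m-d+p$. For $p>d$ the values $\pi'(p)$ are all $>i_{k_m}$ and increasing in $p$, while $t_{k_m}^{i_{k_m}}$ is order preserving on $\{i_{k_m}+1,\dots,n\}$, so the tail stays increasing. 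I expect this bookkeeping of the second block of $\pi'$ — certifying exactly which small values remain available — to be the one genuinely delicate point.

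Granting the claim, the rest is formal. The values $k_{j(p)}-d+p$ strictly increase in $p$ on $\{1,\dots,d\}$; moreover $\pi(d)=k_m$ while $\pi(d+1)=\min\bigl(\{1,\dots,n\}\setminus\{\pi(1),\dots,\pi(d)\}\bigr)<k_m$ (using $d\le k_1\le k_m$, the degenerate equality $k_m=d$ being impossible as it would force $m=1$ and $i_{k_1}=k_1$); and the tail is increasing. Hence $des(\pi)=\{d\}$, so $maj(\pi)=d=\sum_j i_{k_j}$, and the third bullet follows from the characterisation $r\in des(\pi)$ if and only if $\ell(s_r\pi)<\ell(\pi)$, together with the standard fact that $\ell(s_r\pi)-\ell(\pi)\in\{-1,+1\}$.

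For the length, a permutation with a single descent, at position $d$, has exactly $\bigl(\sum_{p=1}^{d}\pi(p)\bigr)-\binom{d+1}{2}$ inversions, all of them occurring between the two increasing runs; and since $k_j$ occurs $i_{k_j}$ times among $k_{j(1)},\dots,k_{j(d)}$, one has $\sum_{p=1}^{d}\pi(p)=\sum_{p=1}^{d}\bigl(k_{j(p)}-d+p\bigr)=\sum_{j}k_j i_{k_j}-d^{2}+\binom{d+1}{2}$, whence $\ell(\pi)=\sum_j k_j i_{k_j}-d^{2}=\sum_j k_j i_{k_j}-\bigl(maj(\pi)\bigr)^{2}$. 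Finally, for the subword bullet: any sub‑product $\prod_{j\in J}t_{k_j}^{i'_{k_j}}$ of $\pi$ with $0\le i'_{k_j}\le i_{k_j}$ has exponent sum at most $d\le k_1\le\min_{j\in J}k_j$ and is therefore elementary; in particular, if $\pi=\pi_1\cdot\pi_2$ with $\pi_1$ an initial and $\pi_2$ a final chunk of the canonical word (a middle power $t_{k}^{i_{k}}$ being allowed to split as $t_k^{a}\cdot t_k^{i_k-a}$), both are elementary, $\pi=\pi_1\cdot\pi_2$ is already the concatenated canonical form, and applying the length formula to all three gives $\ell(\pi)=\ell(\pi_1)+\ell(\pi_2)-2d_1d_2<\ell(\pi_1)+\ell(\pi_2)$, where $d_1,d_2>0$ are the exponent sums of $\pi_1$ and $\pi_2$.
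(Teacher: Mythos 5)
Your argument is correct, and it is worth noting that the paper itself gives no proof of this statement: Theorem \ref{theorem-elementary} is quoted from \cite{S1}, and the route taken there (visible in how the present paper later invokes \cite{S1} Theorem 28) goes through the explicit normal reduced form $norm(\pi)=\prod_{u=\rho_1}^{k_1-1}\prod_{r=0}^{\rho_1-1}s_{u-r}\cdot\prod_{u=k_1}^{k_2-1}\prod_{r=0}^{\rho_2-1}s_{u-r}\cdots$ of an elementary element, from which the length is read off as the number of letters and the descents from the shape of the reduced word. You instead compute the one-line notation directly, proving by induction that $\pi(p)=k_{j}-d+p$ on the block $I_{j-1}<p\le I_j$ with an increasing tail, and then count inversions between the two increasing runs. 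Your key steps check out: the bound $k_1\ge d$ is exactly what guarantees both that $t_{k_m}^{i_{k_m}}$ acts by subtraction on the first block of $\pi'$ and that the values $1,\dots,i_{k_m}$ sit at positions $d'+1,\dots,d$ of $\pi'$; the inversion count $\sum_{p\le d}\pi(p)-\binom{d+1}{2}$ for a permutation with single descent at $d$ is right; the degenerate case $k_m=d$ is correctly excluded by $i_{k_1}<k_1$; and the subword argument (prefix/suffix of the canonical word, giving $\ell(\pi)=\ell(\pi_1)+\ell(\pi_2)-2d_1d_2$) is sound, with the implicit and reasonable convention that both subwords are nontrivial, since otherwise the inequality is an equality. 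The trade-off between the two approaches: yours is self-contained and elementary, needing only the definition of inversions and the standard fact that $\ell(s_r\pi)=\ell(\pi)\pm 1$, whereas the normal-form approach of \cite{S1} additionally produces an explicit reduced word for $\pi$, which is the structural ingredient the present paper reuses (e.g.\ in the proof of Theorem \ref{standard-bn-sn}), so it buys more downstream but at the cost of heavier bookkeeping.
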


\begin{definition}\label{canonical-factorization-def}
Let $\pi\in S_n$. Let $z(\pi)$ be the minimal number, such that $\pi$ can be presented as a product of standard $OGS$ elementary elements, with the following conditions:
\begin{itemize}
\item $$\pi=\prod_{v=1}^{z(\pi)}\pi^{(v)}, ~~~~ where ~~~~\pi^{(v)}=\prod_{j=1}^{m^{(v)}}t_{h^{(v)}_{j}}^{\imath_{j}^{(v)}},$$
 by the presentation in the standard $OGS$ canonical form  for every $1\leq v\leq z(\pi)$ and  $1\leq j\leq m^{(v)}$ such that:
 \begin{itemize}
\item $\imath_{j}^{(v)}>0;$ \\
\item $\sum_{j=1}^{m^{(1)}}\imath_{j}^{(1)}\leq h^{(1)}_{1}$ i.e., $maj\left(\pi^{(1)}\right)\leq h^{(1)}_{1}$; \\
\item $h^{(v-1)}_{m^{(v-1)}}\leq\sum_{j=1}^{m^{(v)}}\imath_{j}^{(v)}\leq h^{(v)}_{1}$ for $2\leq v\leq z$ \\ \\
i.e., $h^{(v-1)}_{m^{(v-1)}}\leq maj\left(\pi^{(v)}\right)\leq h^{(v)}_{1} ~~ for ~~ 2\leq v\leq z$.
\end{itemize}
\end{itemize}
Then, the mentioned presentation is called \textbf{Standard $OGS$ elementary factorization} of $\pi$. Since
the factors $\pi^{(v)}$ are standard $OGS$ elementary elements, they are called standard $OGS$ elementary factors of $\pi$.
\end{definition}

\begin{theorem}\label{theorem-factorization}\cite{S1}
Let $\pi=\prod_{j=1}^{m}t_{k_{j}}^{i_{k_{j}}}$ be an element of $S_n$ presented in the standard $OGS$ canonical form, with $i_{k_{j}}>0$ for every $1\leq j\leq m$. Consider the standard $OGS$ elementary factorization of $\pi$ with all the notations used in Definition \ref{canonical-factorization-def}.
Then, the following properties hold:
\begin{itemize}
\item The standard $OGS$ elementary factorization of $\pi$ is unique, i.e., the parameters  $z(\pi)$, $m^{(v)}$ for $1\leq v\leq z(\pi)$, $h^{(v)}_{j}$, and $\imath_{j}^{(v)}$ for $1\leq j\leq m^{(v)}$, are uniquely determined by the standard $OGS$ canonical form of $\pi$, such that:
    \begin{itemize}
    \item For every $h^{(v)}_{j}$ there exists exactly one $k_{j'}$ (where, $1\leq j'\leq m$), such that $h^{(v)}_{j}=k_{j'}$;
    \item If $h^{(v)}_{j}=k_{j'}$, for some $1\leq v\leq z(\pi)$, ~$1<j<m^{(v)}$, and $1\leq j'\leq m$, then $\imath_{j}^{(v)}=i_{k_{j'}}$;
    \item If $h^{(v_{1})}_{j_{1}}=h^{(v_{2})}_{j_{2}}$, where $1\leq v_{1}<v_{2}\leq z(\pi)$, ~$1\leq j_{1}\leq m^{(v_{1})}$, and  \\ $1\leq j_{2}\leq m^{(v_{2})}$, then necessarily $v_{1}=v_{2}-1$, ~$j_{1}=m^{(v_{1})}$, ~$j_{2}=1$, and $$h^{(v_{2}-1)}_{m^{(v_{2}-1)}}=h^{(v_{2})}_{1}=maj\left(\pi_{(v_{2})}\right)=k_{j'},$$
        for some $j'$, such that $\imath_{m^{(v_{2}-1)}}^{(v_{2}-1)}+\imath_{1}^{(v_{2})}=i_{k_{j'}}$;
    \end{itemize}

\item $$\ell(s_r\cdot \pi) = \begin{cases} \ell(\pi)-1 & r=\sum_{j=1}^{m^{(v)}}\imath_{j}^{(v)} ~~for ~~ 1\leq v\leq z(\pi) \\
\ell(\pi)+1 & otherwise \end{cases}.$$
i.e., $$des\left(\pi\right)=\bigcup_{v=1}^{z(\pi)}des\left(\pi^{(v)}\right)=\{maj\left(\pi^{(v)}\right)~|~1\leq v\leq z(\pi)\};$$
\item
\begin{align*}
\ell(\pi) &= \sum_{v=1}^{z(\pi)}\ell(\pi^{(v)}) = \sum_{v=1}^{z(\pi)}\sum_{j=1}^{m^{(v)}}h^{(v)}_{j}\cdot \imath_{j}^{(v)}-\sum_{v=1}^{z(\pi)}\left(maj\left(\pi^{(v)}\right)\right)^{2} \\ &= \sum_{x=1}^{m}k_{x}\cdot i_{k_{x}}-\sum_{v=1}^{z(\pi)}\left(maj\left(\pi^{(v)}\right)\right)^{2} \\ &= \sum_{x=1}^{m}k_{x}\cdot i_{k_{x}}-\sum_{v=1}^{z(\pi)}{{\left(c^{(v)}\right)}}^{2}, ~~ where ~~ c^{(v)}\in des\left(\pi\right).
\end{align*}
\end{itemize}
\end{theorem}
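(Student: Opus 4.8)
The plan is to introduce an inversion–set criterion for when Coxeter lengths add under multiplication, and then to run a single induction on $z(\pi)$ that proves the length formula and the descent identity at once; the uniqueness of the factorization and the three relations of the first bullet I would settle first, by a direct analysis of the greedy construction. For $\sigma\in S_n$ write $\overline{\mathrm{Inv}}(\sigma)=\{(a,b):a<b,\ \sigma^{-1}(a)>\sigma^{-1}(b)\}$ for the set of value–pairs reversed in the one–line notation of $\sigma$, and $\mathrm{Inv}(\sigma)=\{(a,b):a<b,\ \sigma(a)>\sigma(b)\}$. Counting, for each pair $a<b$, its contribution to the inversions of the product $\sigma\rho$ according to whether $(a,b)\in\overline{\mathrm{Inv}}(\sigma)$, and using the left–to–right convention $(\sigma\rho)(i)=\rho(\sigma(i))$ fixed in Definition \ref{sn}, one obtains $\ell(\sigma\rho)=\bigl|\overline{\mathrm{Inv}}(\sigma)\,\triangle\,\mathrm{Inv}(\rho)\bigr|$, hence
\[
\ell(\sigma\rho)=\ell(\sigma)+\ell(\rho)\quad\Longleftrightarrow\quad \overline{\mathrm{Inv}}(\sigma)\cap\mathrm{Inv}(\rho)=\varnothing .
\]
This is the only length identity the argument needs.

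For the factorization itself, write $\pi=\prod_{j=1}^m t_{k_j}^{i_{k_j}}$ with $k_1<\cdots<k_m$ and all $i_{k_j}>0$. The factors are produced by a deterministic left–to–right scan of this word: one opens a factor, absorbs whole generators while that stays compatible with the elementarity bound and with the chaining inequalities of Definition \ref{canonical-factorization-def}, and, when a break is forced, the portion of the current generator left in the factor is pinned down by those inequalities — this is exactly the two–generator outcome recorded in Remark \ref{exchange-2}. One then checks that the resulting word satisfies all conditions of Definition \ref{canonical-factorization-def}, that it is forced (shrinking or enlarging a factor violates elementarity, or $\mathrm{maj}(\pi^{(v)})\le h^{(v)}_1$, or $h^{(v-1)}_{m^{(v-1)}}\le\mathrm{maj}(\pi^{(v)})$), and hence that it has the least number of factors; this gives existence and uniqueness. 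The three relations of the first bullet then read off: a factor is a run of consecutive generators $t_{k_j}^{i_{k_j}}$, consecutive factors overlap in at most one boundary generator, and there the two exponents sum back to the original $i_{k_{j'}}$. Two consequences I record for later use: the numbers $\mathrm{maj}(\pi^{(v)})=\sum_j\imath_j^{(v)}$ are strictly increasing in $v$ (since $\imath_j^{(v)}<h_j^{(v)}$ in the $OGS$ canonical form), so the set on the right of the descent claim has exactly $z(\pi)$ elements; and $\sum_v\sum_j h_j^{(v)}\imath_j^{(v)}=\sum_x k_x i_{k_x}$, because the two pieces of a split boundary generator recombine.

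Now the induction on $z(\pi)$. The base case $z(\pi)=1$ is Theorem \ref{theorem-elementary}. For the step write $\pi=\pi^{(1)}\cdot\rho$ with $\rho=\pi^{(2)}\cdots\pi^{(z)}$; the conditions of Definition \ref{canonical-factorization-def} restrict to the tail, so $(\pi^{(2)},\dots,\pi^{(z)})$ is the standard $OGS$ elementary factorization of $\rho$ and the inductive hypothesis applies to it. Put $N=h^{(1)}_{m^{(1)}}$, the largest index occurring in $\pi^{(1)}$. Since $\pi^{(1)}$ fixes $\{N+1,\dots,n\}$ pointwise and permutes $\{1,\dots,N\}$, every value–pair it reverses lies in $\binom{\{1,\dots,N\}}{2}$, so $\overline{\mathrm{Inv}}(\pi^{(1)})\subseteq\binom{\{1,\dots,N\}}{2}$. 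On the other hand the inductive hypothesis gives $\mathrm{des}(\rho)=\{\mathrm{maj}(\pi^{(v)}):2\le v\le z\}$, and the chaining bound $h^{(1)}_{m^{(1)}}\le\mathrm{maj}(\pi^{(2)})$ forces every descent of $\rho$ to be $\ge N$; hence $\rho$ is increasing on $\{1,\dots,N\}$ and $\mathrm{Inv}(\rho)\cap\binom{\{1,\dots,N\}}{2}=\varnothing$. These two facts give $\overline{\mathrm{Inv}}(\pi^{(1)})\cap\mathrm{Inv}(\rho)=\varnothing$, so $\ell(\pi)=\ell(\pi^{(1)})+\ell(\rho)$ by the criterion, and $\ell(\rho)=\sum_{v=2}^{z}\ell(\pi^{(v)})$ by the inductive hypothesis, whence $\ell(\pi)=\sum_{v=1}^{z}\ell(\pi^{(v)})$; substituting Theorem \ref{theorem-elementary} into each factor and using $\sum_v\sum_j h_j^{(v)}\imath_j^{(v)}=\sum_x k_x i_{k_x}$ yields the closed length formula.

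It remains to identify $\mathrm{des}(\pi)$ in the step, which I would do by comparing $\pi(r)=\rho(\pi^{(1)}(r))$ with $\pi(r+1)=\rho(\pi^{(1)}(r+1))$, using that $\rho$ is increasing on $\{1,\dots,N\}$. If $r\le N-1$ then $\pi^{(1)}(r),\pi^{(1)}(r+1)\in\{1,\dots,N\}$, so monotonicity of $\rho$ reduces the question to $r\in\mathrm{des}(\pi^{(1)})=\{\mathrm{maj}(\pi^{(1)})\}$ (Theorem \ref{theorem-elementary}); if $r>N$ then $\pi^{(1)}$ fixes $r$ and $r+1$, and the question reduces to $r\in\mathrm{des}(\rho)$. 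The single delicate value is $r=N$: here $\pi^{(1)}(N+1)=N+1$ while $\pi^{(1)}(N)=N-e$ with $e=\imath^{(1)}_{m^{(1)}}$ (the last factor of $\pi^{(1)}$ is $t_N^{e}$ and all earlier factors of $\pi^{(1)}$ fix $N$), so one must decide the sign of $\rho(N-e)-\rho(N+1)$. When $\mathrm{maj}(\pi^{(2)})>N$, $\rho$ is still increasing through $N+1$, so $\rho(N-e)<\rho(N+1)$ and $N\notin\mathrm{des}(\pi)$, consistent with $N$ not being any $\mathrm{maj}(\pi^{(v)})$; when $\mathrm{maj}(\pi^{(2)})=N$ — the case where $\pi^{(1)}$ and $\pi^{(2)}$ share the generator $t_N$ — one unwinds the leading factors of $\rho$ using the exchange laws of Proposition \ref{exchange} and the explicit action of the cyclic generators $t_m^{i}$ on the points near $N$, to confirm $\rho(N-e)>\rho(N+1)$, so that $N=\mathrm{maj}(\pi^{(2)})\in\mathrm{des}(\pi)$. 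I expect this last verification in the overlapping–boundary case — together with making the forced splitting rule of the factorization fully explicit — to be the main obstacle; everything else is the inversion–set criterion and bookkeeping.
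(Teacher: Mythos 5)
This theorem is quoted from \cite{S1} and not proved in the present paper, so there is no in-paper argument to compare with; judging your proposal on its own terms, the analytic core is sound but the combinatorial core is not yet a proof. Your inversion-set criterion is correct for the left-to-right convention of Definition \ref{sn}, and the induction on $z(\pi)$ built on it does work: I checked that the one delicate case you flag ($r=N$ with $maj(\pi^{(2)})=N$) goes through, using the explicit one-line form of an elementary factor together with the fact that the product of the later factors is increasing on the relevant prefix of positions, in both the disjoint and the shared-generator subcases. So the second and third bullets are within reach of your plan, \emph{granted} the structural facts you invoke (the boundary exponents of consecutive factors sum to the original $i_{k_{j'}}<k_{j'}$, the $maj(\pi^{(v)})$ strictly increase, the tail $(\pi^{(2)},\dots,\pi^{(z)})$ is \emph{the} factorization of $\rho$).

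Those granted facts are exactly where the genuine gap lies. Your uniqueness argument claims that any presentation satisfying the bulleted conditions of Definition \ref{canonical-factorization-def} is forced to be the greedy one, "hence" minimal; that forcing claim is false if minimality is not used. For example, $\pi=t_5^{2}\cdot t_6^{2}$ is itself elementary (so $z(\pi)=1$), yet $\pi=\bigl(t_5^{4}\bigr)\cdot\bigl(t_5^{3}\cdot t_6^{2}\bigr)$ satisfies every listed condition: both factors are elementary, and $h^{(1)}_{m^{(1)}}=5\le maj(\pi^{(2)})=5\le h^{(2)}_{1}=5$; here the shared exponents sum to $7=i_{k_{j'}}+5$, not to $i_{k_{j'}}$. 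So admissible factorizations are not unique, the boundary-sum property can fail for them, and your "read-off" of the three sub-bullets collapses unless the argument genuinely exploits minimality of $z(\pi)$ (e.g.\ by showing that a boundary sum $\ge k$ allows one to merge factors and shorten the presentation, or by bounding $z$ from below by $|des(\pi)|$). The same omission infects the induction step: you assert that the conditions "restrict to the tail, so $(\pi^{(2)},\dots,\pi^{(z)})$ is the standard $OGS$ elementary factorization of $\rho$", but admissibility of the tail does not by itself give minimality for $\rho$, and without that the inductive hypothesis does not apply. Supplying a correct uniqueness/minimality argument (and then deducing that the tail is minimal for $\rho$) is the missing piece; the rest of your outline can then be carried out as written.
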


The paper is organized as follow: In section \ref{gen-OGS_B-n}, we recall some important definitions and basic properties concerning the group $B_{n}$, Then we generalize the definition of the standard $OGS$ and we show the arising exchange laws. In section \ref{parabolic-subgroup}, we focus on the parabolic subgroup of $B_{n}$ which does not contain $s_{0}$ in it's reduced Coxeter presentation. It is easy to show that the subgroup is isomorphic to the symmetric group $S_n$, where we denote the subgroup  by $\dot{S}_n$. We show the presentation of $\dot{S}_n$ by the generalized standard $OGS$, and then we show the properties of the standard $OGS$ elementary elements and the the standard $OGS$ elementary factorization of the elements (as it described in \cite{S1}) in $\dot{S}_n$. In section \ref{gen-stan-factor}, we generalize the definition of the standard $OGS$ elementary factorization for the group $B_{n}$. In section \ref{cox-length}, we introduce a formula for the length function of $B_{n}$ by using the generalized standard $OGS$ elementary factorization which is defined in the previous section. In section \ref{descent-bn}, we characterize the descent set of the elements in the Coxeter group $B_{n}$, by using the generalized standard $OGS$ presentation for the elements of $B_{n}$.

\section{Generalization of the standard OGS for the Coxeter group $B_{n}$}\label{gen-OGS_B-n}

First, we recall some important definitions and basic properties concerning the group $B_{n}$, for example see \cite{BB}

\begin{definition}\label{def-bn}

Let $B_{n}$ be the  Coxeter group with $n$ generators, with the following presentation:

$$\begin{array}{r}
\left\langle s_{0}, s_{1}, \ldots, s_{n-1}\right| s_{i}^{2}=1, \left(s_{0} \cdot s_{1}\right)^{4}=1, 
\left(s_{i} \cdot s_{i+1}\right)^{3}=1 \text { for } 1 \leq i \leq n-1, 
\left.\left(s_{i} \cdot s_{j}\right)^{2}=1 \text { for }|i-j| \geq 2\right\rangle
\end{array}$$
\end{definition}

\textbf{Basic properties of $B_n$}
\\
\begin{itemize}
\item The group $B_{n}$ can be presented as a permutation group of the set $[\pm n]$, Where :$$[\pm n]=\{i \in \mathbb{Z} | 1 \leqslant i \leqslant n \quad \text{or}  \quad -n \leqslant i \leqslant -1  \}$$ With the following property:\\
$$
 \pi(-i)=-\pi(i) \quad \text { for every } \quad i \in[\pm n] . 
$$
\item  $B_{n}$, can be considered as a signed permutation group, where 
$\pi$ is uniquely determined by  $\pi(i)$ for $1\leq i\leq n$.
\item A signed permutation $\pi\in B_n$ is denoted by 
$$
[\pi(1); ~\pi(2); \ldots; ~\pi(n)]
 $$
(e.g., $\pi=
[2; ~-4; ~1; ~3]$ is a permutation in $B_{4}$ which satisfies $\pi(1)=2$, $\pi(2)=-4$, $\pi(3)=1$,  $\pi(4)=3$, and $\pi(-1)=-2$, $\pi(-2)=4$, $\pi(-3)=-1$,  $\pi(-4)=-3$)
\item 
$[|\pi(1)|; ~|\pi(2)|; \ldots; ~|\pi(n)|] $
is a permutation of $S_n$ \\
\item Similarly to $S_n$, for $1\leq i\leq n-1$, the Coxeter generators $s_i$ can be considered the permutation which exchanges the element $i$ with the element $i+1$ and additionally exchanges $-i$ with $-(i+1)$, and  $s_0$ is the permutation which exchanges the element $1$ with the element $-1$.
\item 
$B_n\simeq\mathbb{Z}_2^{n}\rtimes S_n$.

\item $|B_n|=2^n\cdot n!$.
\end{itemize}

\begin{definition}

We define des($\pi$) to be the left descent set of $\pi$.
$$s_{i} \in des(\pi) \quad if \quad \ell(s_{i} \cdot \pi) < \ell(\pi).$$

Similarly to $S_{n}$, the following properties holds in $B_{n}$:
$$
des(\pi)=\{0 \leqslant i \leqslant n-1 \mid \pi(i) > \pi(i+1)\}, \quad (\pi(0) \text{ defined to be } (0))$$ \end{definition}

Now, we recall the definition of the normal reduced form for $B_{n}$, as it is defined in \cite{BB} Chapter 3.4.

\begin{definition}\label{Normal form of $B_{n}$}

$$norm (\pi) = \prod_{i=0}^{n-1} \prod_{j=0}^{y_{i}-1} s_{|i-j|}$$ 

where, $y_i$ is a non-negative integer such that $0\leq y_i\leq 2i+1$, and the Coxeter length of $\pi$ as follow

$$\ell(\pi)=\sum_{i=0}^{n-1}y_i.$$

\end{definition}

\begin{example}
Consider the normal form of the following $\pi\in B_5$
$$norm(\pi)=s_0\cdot (s_1\cdot s_0\cdot s_1)\cdot (s_3\cdot s_2\cdot s_1\cdot s_0)\cdot (s_4\cdot s_2\cdot s_2\cdot s_1\cdot s_0\cdot s_1\cdot s_2)\cdot (s_5\cdot s_4\cdot s_3)$$

Then, $y_0=1, \ y_1=3, \ y_2=0, \ y_3=4, \ y_4=7, \ y_5=3$, and then
$$\ell(\pi)=1+3+0+4+7+3=18.$$
\end{example}

\subsection{The generalized standard OGS of $B_{n}$}
In this subsection we generalize the definition of the standard OGS (defined in \cite{S1}) for the group $B_n$, and we show the arising exchange laws.

Now, similar to the definition of $t_k$ in $S_n$ \cite{S1}, we define  $\tau_k$ in $B_n$ for $k=1,2, \ldots, n$ as follow:
\begin{definition}\label{tau}
For $k= 1,2,\ldots,n$ let $\tau_{k}$ be :
$$\tau_{k}= \prod_{j=0}^{k-1} s_{j} $$
\end{definition}

\begin{remark}
For every $1\leqslant k \leqslant n $, $\tau_{k}$ satisfies the following properties:
$$\begin{array}{l}
\tau_{k}(1)=-k \\
\tau_{k}(j)=j-1, \text { for } 2 \leq j \leq k \\
\tau_{k}(j)=j, \text { for } k+1 \leq j \leq n
\end{array}$$
\end{remark}
\begin{remark}
$$\tau_{k} = s_{0} \cdot t_{k} .$$

\end{remark}

\begin{remark}\label{tau-power}
 For $1\leq k\leq n$, let $\tau_{k}$ be an element of $B_n$ as it is defined in Definition \ref{tau}, then the following holds:
 \begin{itemize}
 \item For $i_{k}=-k$
 $$
    \tau_{k}^{i_{k}}(j) =\left\{\begin{array}{ll}
    -j & \text { for } 1 \leq j \leq k
    \\ \\
    j & \text { for } k+1 \leq j \leq n
    \end{array}\right.
    $$
\item For $0<i_k<k$
  $$
    \tau_{k}^{i_{k}}(j) =\left\{\begin{array}{ll}
    -(j-i_k+k) & \text { for } 1 \leq j \leq i_k
    \\ \\
    j-i_k & \text { for } i_k+1 \leq j \leq k
    \\ \\
    j &  \text { for } k+1 \leq j \leq n
    \end{array}\right.
    $$
 \item For $-k<i_k<0$
  $$
    \tau_{k}^{i_{k}}(j) =\left\{\begin{array}{ll}
    j-i_{k} & \text { for } 1 \leq j \leq i_k +k
    \\ \\
    -(j-i_k-k)& \text { for } i_k+k+1 \leq j \leq k
    \\ \\
    j &  \text { for } k+1 \leq j \leq n
    \end{array}\right.
    $$ 
 \end{itemize}
\end{remark}

Now, we define the generalized standard $OGS$ for the group $B_n$ as follow

\begin{theorem}\label{ogs-bn}
For $k=1, 2, \ldots, n$ let $\tau_{k}$ be the elements of $B_n$ as defined in Definition \ref{tau} then the following holds:\\

Every element $g\in B_n$ has a unique presentation in the following form:

$$\tau_{1}^{i_1}\cdot \tau_{2}^{i_2}\cdots \tau_{n}^{i_n}$$

such that,
$-k\leq i_{k}<k$.

\end{theorem}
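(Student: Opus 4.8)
The plan is to establish existence and uniqueness of the presentation $g = \tau_1^{i_1}\cdot\tau_2^{i_2}\cdots\tau_n^{i_n}$ with $-k \le i_k < k$ by a counting-plus-injectivity argument. First I would count the number of admissible exponent tuples: the index $i_k$ ranges over the $2k$ values $\{-k,-k+1,\ldots,k-1\}$, so the number of tuples is $\prod_{k=1}^{n} 2k = 2^n\cdot n! = |B_n|$. Hence it suffices to prove that the map sending a tuple $(i_1,\ldots,i_n)$ to the product $\tau_1^{i_1}\cdots\tau_n^{i_n}$ is injective (equivalently surjective); either one gives a bijection onto $B_n$.

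For injectivity I would argue by descending induction on the ``support from the top,'' reading off the exponents one at a time using the signed-permutation action. Suppose $\tau_1^{i_1}\cdots\tau_n^{i_n} = \tau_1^{i'_1}\cdots\tau_n^{i'_n}$. Using Remark \ref{tau-power}, note that for $m < n$ the product $\tau_1^{i_1}\cdots\tau_m^{i_m}$ fixes $n$ (indeed fixes every $j$ with $m < j \le n$), while $\tau_n^{i_n}$ moves $n$ in a way that records $i_n$: from the three cases of Remark \ref{tau-power} one checks that the pair $\big(|\,g(n)\,|,\ \sgn(g(n))\big)$, or more precisely the value $g(n)$ together with how many of $1,\ldots,n$ get sent to negative values, determines $i_n$ uniquely among $\{-n,\ldots,n-1\}$. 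Concretely, $\tau_n^{i_n}$ sends $n \mapsto n - i_n$ when $0 \le i_n < n$ and $n \mapsto -(n - i_n - n) = -(-i_n) $-type expressions when $i_n < 0$ or $i_n = -n$; in every case the image of $n$ (a nonzero integer in $[\pm n]$) pins down $i_n$. Once $i_n = i'_n$ is forced, multiply both sides on the right by $\tau_n^{-i_n}$ to reduce to an identity in the subgroup generated by $\tau_1,\ldots,\tau_{n-1}$, which lives inside the copy of $B_{n-1}$ fixing $\pm n$, and induct. The base case $n = 1$ is $\tau_1^{i_1} = \tau_1^{i'_1}$ with $i_1, i'_1 \in \{-1,0\}$, and $\tau_1 = s_0$ has order $2$, so this is immediate.

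That the elements $\tau_1,\ldots,\tau_n$ do generate $B_n$ is a byproduct: once injectivity on a set of size $|B_n|$ is shown, the map is onto, so every $g \in B_n$ is such a product and in particular lies in $\langle \tau_1,\ldots,\tau_n\rangle$. (Alternatively one can note $s_0 = \tau_1$ and $s_j = t_j^{-1}\cdot t_{j+1} = (s_0\tau_j)^{-1}(s_0\tau_{j+1})$, recovering all Coxeter generators, but the counting argument makes this unnecessary.)

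The main obstacle I anticipate is the bookkeeping in the ``read off $i_n$'' step: one must verify cleanly, across the three regimes of Remark \ref{tau-power} (namely $i_n = -n$, $-n < i_n < 0$, and $0 \le i_n < n$), that the single integer $g(n) \in [\pm n]$ — or if necessary $g(n)$ together with the total number of sign changes among $\{1,\ldots,n\}$ — is a strictly monotone, hence injective, function of $i_n$ on $\{-n,\ldots,n-1\}$. This is the only place where the signed structure (as opposed to the $S_n$ case of Theorem \ref{canonical-sn}) genuinely enters, and it is where the condition $-k \le i_k < k$, giving exactly $2k$ rather than $k$ choices, must be used to match the order $2^n n!$. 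Everything else is a routine induction.
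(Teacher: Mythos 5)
Your proposal is correct and follows essentially the same route as the paper: induction on $n$ using the fact that $\tau_1^{i_1}\cdots\tau_{n-1}^{i_{n-1}}$ lies in the parabolic copy of $B_{n-1}$ fixing $n$, while $\tau_n^{i_n}(n)$ takes the $2n$ distinct values $n-i_n$ (for $0\le i_n<n$) and $i_n$ (for $-n\le i_n<0$), so the image of $n$ pins down $i_n$ and one cancels $\tau_n^{i_n}$ and inducts. The only cosmetic difference is that you package existence as injectivity plus the count $\prod_{k=1}^{n}2k=2^n\cdot n!=|B_n|$, whereas the paper states the same fact as a decomposition of $B_n$ into the $2n$ distinct right cosets $\dot{B}_{n-1}\tau_n^{i_n}$.
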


\begin{proof}
The proof is by induction on $n$. for $n=1$, it is easy to see that $B_1$ is generated by $\tau_1$ since $B_1$ is a cyclic group of order $2$. Now, assume by induction that theorem holds for every $k$ such that $k\leq n-1$. Denote by $\dot{B}_{n-1}$ the parabolic subgroup of $B_n$ generated by $s_0, s_1, \ldots s_{n-2}$. Easy to see that $\dot{B}_{n-1}$ is isomorphic to $B_{n-1}$ (which satisfy the theorem by the induction hypothesis). Notice, that in the permutation presentation of $B_n$, every element $x\in \dot{B}_{n-1}$ satisfies $x(n)=n$. Now, consider the right cosets of $\dot{B}_{n-1}$ in $B_n$. There are $2n$ different right cosets, where every two elements $x$ and $y$ in the same right coset of $\dot{B}_{n-1}$ in $B_n$ satisfy $x(n)=y(n)$. 
Now, notice that the powers of $\tau_{n}$ satisfy the following properties:
\begin{itemize}
    \item For $0\leq i_n<n$,  $\tau_{n}^{i_n}(n)=n-i_n$;
    \item For $-n\leq i_n<0$, $\tau_{n}^{i_n}(n)=i_n$.
\end{itemize}
Hence, for $-n\leq i_n\leq n-1$, the elements $\tau_{n}^{i_n}$ gives the $2n$ different images of $n$ in the permutation presentation. Hence, for $-n\leq i_n\leq n-1$, we have the following $2n$ different right cosets of $\dot{B}_{n-1}$ in $B_n$ :
$$\dot{B}_{n-1}\tau_{n}^{i_n}.$$
Then the result of the theorem holds for $k=n$.
\end{proof}

\begin{example}
Consider the element $\pi=
[-2; ~-1; ~-4; ~-3]$ of $B_4$. Now, we construct the $OGS$ presentation of the element, as it is described in Theorem \ref{ogs-bn}.
First, notice that $\pi(4)=-3$. Hence $\tau_4^{i_4}(i_4)=-3$, which implies that  $i_4=-3$. Thus we conclude:
$$\pi\in \dot{B}_3 ~\tau_4^{-3}.$$
Now, consider $\pi(3)=-4$. Notice, that $\tau_4^{-3}(-1)=-4$. Hence, we consider  $i_3$, such that $\tau_3^{i_3}(3)=-1$, which implies $i_3=-1$. 
Thus we conclude: 
$$\pi\in \dot{B}_2 ~\tau_3^{-1}\cdot \tau_4^{-3}.$$
Continuing by the same process, consider $\pi(2)=-1$. 
Notice, that \\ $[\tau_3^{-1}\cdot \tau_4^{-3}](1)=-1$. Hence, we consider $i_2$, such that $\tau_2^{i_2}(2)=1$, which implies $i_2=2-1=1$. 
Thus we conclude: 
$$\pi = \dot{B}_1 ~\tau_2\cdot\tau_3^{-1}\cdot \tau_4^{-3}.$$
Finally, considering $\pi(1)=-2$.
Notice, that \\ $[\tau_2\cdot \tau_3^{-1}\cdot \tau_4^{-3}](-1)=-2$. Hence, we consider $i_1$, such that $\tau_1^{i_1}(1)=-1$, which implies $i_1=-1$. 
Thus we conclude: 
$$\pi = \tau_1^{-1}\cdot\tau_2\cdot\tau_3^{-1}\cdot \tau_4^{-3}.$$
\end{example}

\begin{remark}
We call the presentation of elements of $B_n$ which has been  shown in Theorem \ref{ogs-bn}  the generalized standard $OGS$ presentation of $B_n$.
\end{remark}

Now, we show some important properties of the generalized standard $OGS$ of $B_{n}$.\\
We start with the exchange laws;
\begin{proposition}
In order to transform the element $\tau_{q}^{r_{q}} \cdot \tau_{p}^{r_{p}}$ $(p<q)$ into the generalized standard $OGS$ presentation of the form $\tau_{1}^{i_{1}}\cdot \tau_{2}^{i_{2}} \cdots \tau_{n}^{i_{n}}$ , one need to use the following
exchange laws:

\begin{itemize}
    \item The case $0<r_{p}<p$:
    $$
    \tau_{q}^{r_{q}} \cdot \tau_{p}^{r_{p}}=\left\{\begin{array}{ll}
    \tau_{r_{q}}^{-r_{q}} \cdot \tau_{r_{q}+r_{p}}^{r_{q}} \cdot \tau_{p+r_{q}}^{r_{p}} \cdot \tau_{q}^{r_{q}} & q-r_{q} \geq p
    \\ \\
    \tau_{r_{q}}^{p-q} \cdot \tau_{r_{q}+r_{p}}^{q-p} \cdot \tau_{q}^{r_{q}+r_{p}} & r_{p} \leq q-r_{q} \leq p
    \\ \\
    \tau_{p+r_{q}-q}^{r_{q}+r_{p}-q} \cdot \tau_{r_{q}}^{p-r_{p}-r_{q}} \cdot \tau_{q}^{r_{q}+r_{p}-p-q} & q-r_{q} \leq r_{p}
    \end{array}\right.
    $$
    \item The case $r_{p}=-p$:
    $$
    \tau_{q}^{r_{q}} \cdot \tau_{p}^{r_{p}}=\left\{\begin{array}{ll}
    \tau_{r_{q}}^{-r_{q}} \cdot \tau_{p+r_{q}}^{-p-r_{q}} \cdot \tau_{q}^{r_{q}} & q-r_{q} \geq p 
    \\ \\
    \tau_{p+r_{q}-q}^{-p-r_{q}+q} \cdot \tau_{r_{q}}^{-r_{q}} \cdot \tau_{q}^{r_{q}-q} & q-r_{q}<p
    \end{array}\right.
    $$
    \item The case $-p < r_{p} < 0$:
    $$
    \tau_{q}^{r_{q}} \cdot \tau_{p}^{r_{p}}=\left\{\begin{array}{ll}
    \tau_{r_{q}+r_{p}+p}^{r_{q}} \cdot \tau_{p+r_{q}}^{r_{p}-r_{q}} \cdot \tau_{q}^{r_{q}} & q-r_{q} \geq p
    \\ \\
    \tau_{p+r_{q}-q}^{-p-r_{q}+q} \cdot \tau_{r_{q}}^{p+r_{q}-q} \cdot \tau_{r_{q}+r_{p}+p}^{q-p} \cdot \tau_{q}^{p+r_{p}-q+r_{q}} & r_{p}+p \leq q-r_{q} \leq p 
    \\ \\
    \tau_{p+r_{q}-q}^{r_{p}} \cdot \tau_{r_{q}}^{-r_{p}} \cdot \tau_{q}^{r_{q}+r_{p}} & q-r_{q} \leq r_{p}+p
    \end{array}\right.
    $$
\end{itemize}

\end{proposition}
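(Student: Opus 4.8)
The plan is to establish each of the three displayed exchange formulas by a direct computation on the permutation level, using the explicit action of $\tau_k^{i_k}$ on $\{1,\dots,n\}$ recorded in Remark \ref{tau-power}. Since every element of $B_n$ is uniquely determined by the images $\pi(1),\dots,\pi(n)$, and since the $OGS$ presentation is unique by Theorem \ref{ogs-bn}, it suffices to verify that the product $\tau_q^{r_q}\cdot\tau_p^{r_p}$ (with multiplication read left to right, as in the conventions recalled from \cite{S1}) and the claimed right-hand side produce the same signed permutation. I would first record, once and for all, the signed permutation $\sigma := \tau_q^{r_q}\cdot\tau_p^{r_p}$ as a piecewise formula in $j$: apply $\tau_q^{r_q}$ first, split the range $1\le j\le n$ into the blocks dictated by the relevant case for $r_q$ (either $0<r_q<q$ or $r_q=-q$ or $-q<r_q<0$ — note the first case below also tacitly assumes $0<r_q$, with the degenerate subcases $r_q=0$ or $r_q=-q$ being trivial), then feed the output into $\tau_p^{r_p}$, remembering that $\tau_p^{r_p}$ fixes everything of absolute value $>p$ and acts with a sign flip on a sub-block of $\{1,\dots,p\}$.

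**Next**, for each of the three cases of $r_p$ and each of the sub-cases determined by the sign of $q-r_q-p$, $q-r_q-r_p$, etc., I would compute the right-hand side the same way — it is always a product of at most four $\tau$'s with strictly increasing indices, so its action is obtained by composing the three-block formulas of Remark \ref{tau-power} in order — and check the two piecewise functions agree on every block. The bookkeeping is organized by the ``break points'' of the composite permutation: the values of $j$ where the formula changes branch. The key structural observation making this tractable is that $\tau_k$ is essentially the long cycle $(k,k-1,\dots,1)$ decorated with a sign change at position $1$ (indeed $\tau_k = s_0\cdot t_k$ by the Remark), so $\tau_q^{r_q}\cdot\tau_p^{r_p}$ differs from the $S_n$-product $t_q^{r_q}\cdot t_p^{r_p}$ only by controlled sign changes; one can therefore lean on Proposition \ref{exchange} for the ``unsigned skeleton'' and only track how the signs propagate. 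Concretely, I would argue that the absolute-value permutation $|\sigma|$ coincides with the corresponding $S_n$-exchange output from Proposition \ref{exchange}, reducing the work to checking the sign pattern, i.e. the set $\{j : \sigma(j)<0\}$, matches on both sides.

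**The subtlety** — and the main obstacle — is the case analysis itself: the sign of $\tau_p^{r_p}$ interacts with where the image $\tau_q^{r_q}(j)$ lands relative to the block $\{1,\dots,p\}$ and its sign-flip sub-block, and this is exactly why the case $-p<r_p<0$ splits into three sub-cases governed by the quantities $q-r_q$ versus $p$ and versus $r_p+p$, while $r_p=-p$ needs only two. Getting the boundary indices exactly right (e.g. distinguishing $q-r_q\ge p$ from $q-r_q\le p$ when $q-r_q=p$, where two adjacent branches must agree) and confirming that each claimed $\tau$-index lies in the admissible range $1\le k\le n$ and each exponent in $-k\le i_k<k$ requires care but no new idea. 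I would therefore present one case in full detail — say $0<r_p<p$ with $q-r_q\ge p$ — to exhibit the method, then indicate that the remaining cases follow by the identical block-composition argument, with the sign-tracking reduction to Proposition \ref{exchange} as the labor-saving device.
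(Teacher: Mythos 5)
Your proposal is correct and, for the bulk of the statement, is essentially the paper's own argument: the paper also fixes the permutation presentation of $\tau_q^{r_q}\cdot\tau_p^{r_p}$ (via $\tau_k=s_0\cdot t_k$ and the block formulas of Remark \ref{tau-power}), invokes Proposition \ref{exchange} for the unsigned skeleton, and then matches the sign pattern, observing for instance in the subcase $0<r_p<p$, $q-r_q\geq p$ that the two sides differ exactly by a sign flip on $1\leq j\leq r_q$, which is absorbed by the prefactor $\tau_{r_q}^{-r_q}$; the case $r_p=-p$ is likewise settled by direct comparison of signed permutations. The one place where your route genuinely diverges is the case $-p<r_p<0$: you propose to continue with the same block-by-block verification, whereas the paper instead writes $\tau_p^{r_p}=\tau_p^{p+r_p}\cdot\tau_p^{-p}$ and derives the three subcases purely algebraically from the already-proven laws for $0<r_p<p$ and $r_p=-p$, at the cost of a long chain of $\tau$-manipulations. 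Your uniform verification is arguably cleaner and avoids that algebra (and, since the canonical form is unique by Theorem \ref{ogs-bn}, checking equality of signed permutations is fully legitimate), but it does require redoing the break-point bookkeeping for negative $r_p$, where the sign-flip sub-block of $\tau_p^{r_p}$ sits at the top of $\{1,\dots,p\}$ rather than the bottom; the paper's reduction trades that bookkeeping for reuse of the earlier cases. Your caveat that the formulas tacitly assume $0<r_q<q$, with $r_q\in\{0,-q\}$ degenerate, is consistent with the paper, which also treats only that range.
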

\begin{proof}
By Remark $2.1.3.$ we have $$\tau_{i} = s_{0} \cdot t_{i} .$$ Then,
$$\tau_{q}^{r_{q}} \cdot \tau_{p}^{r_{p}}= (s_{0} \cdot t_{q})^{r_{q}} \cdot (s_{0} \cdot t_{p})^{r_{p}} $$

Similar to the exchange laws for the standard $OGS$ of $S_n$ as it  is presented in Proposition \ref{exchange}, we consider several cases, depending on the values of $p, q, r_p, r_q$. 
We have the following three main cases:

\begin{enumerate}
    \item $0<r_p<p$; 
    \item $r_p=-p $; 
    \item $-p<r_p<0$.
\end{enumerate}
Let start with the first case.
\begin{enumerate}
    \item \text{The case} \quad $0 < r_{p} < p$ 
    \\
  
    Similar to three cases of exchange laws which arises from the standard $OGS$ presentation of $S_n$, (as it is presented in Proposition \ref{exchange}), we have the following three subcases of exchange laws, depending on how the value of $q-r_q$ compares to the values of $p$ and $r_p$:
\begin{itemize}
    \item $q-r_q\geq p$;
    \item $r_p\leq q-r_q\leq p$;
    \item $q-r_q\leq r_p$.
\end{itemize}
Now, we start with the first subcase:

$\bullet$ $q-r_{q}\geq p$ :\\
$$\tau_{q}^{r_{q}} \cdot \tau_{p}^{r_{p}}= (s_{0} \cdot t_{q})^{r_{q}} \cdot (s_{0} \cdot t_{p})^{r_{p}} $$

Consider the permutation presentation of   $(s_{0} \cdot t_{q})^{r_{q}} \cdot (s_{0} \cdot t_{p})^{r_{p}}$ :\\
$$1 \rightarrow  -(q-r_{q}+1) \rightarrow  -(q-r_{q}+1).$$
$$r_{q} \rightarrow -q \rightarrow -q.$$
$$r_{q}+1 \rightarrow 1  \rightarrow -(p-r_{p}+1).$$
$$r_{q}+r_{p} \rightarrow r_{p} \rightarrow -(p).$$
$$r_{q}+r_{p}+1 \rightarrow r_{p}+1 \rightarrow 1.$$
$$q \rightarrow q-r_{q} \rightarrow q-r_{q}\quad \text{or}\quad p-r_{p}.$$

By the exchange laws of $S_{n}$, as it is presented  in Proposition \ref{exchange}: $$t_{q}^{r_{q}} \cdot t_{p}^{r_{p}} = t_{r_{q}+r_{p}}^{r_{q}} \cdot t_{p+r_{q}}^{r_{p}} \cdot t_{q}^{r_{q}}$$
Now, we consider the permutation presentation of $$ (s_{0} \cdot t_{r_{q}+r_{p}})^{r_{q}} \cdot (s_{0} \cdot t_{p+r_{q}})^{r_{p}} \cdot (s_{0} \cdot t_{q})^{r_{q}}$$

$$1 \rightarrow  -(r_{p}+1) \rightarrow -1 \rightarrow  q-r_{q}+1.$$
$$r_{q} \rightarrow -(r_{p}+r_{q}) \rightarrow -r_{q} \rightarrow q-r_{q}+r_{q} = q.$$
$$r_{q}+1 \rightarrow 1 \rightarrow -1 \rightarrow -(p+r_{q}-r_{p}+1) \rightarrow -(p-r_{p}+1). $$
$$r_{q}+r_{p} \rightarrow r_{p} \rightarrow -(p+r_{q}) \rightarrow -p.$$
$$r_{q}+r_{p}+1 \rightarrow r_{q}+r_{p}+1 \rightarrow r_{q}+1 \rightarrow 1. $$
$$q \rightarrow q-r_{q} \rightarrow q-r_{q}.$$

Then the permutation presentations of  $\tau_{q}^{r_q}\cdot \tau_{p}^{r_p}$ and  $\tau_{r_{q}+r_p}^{r_p} \cdot \tau_{p+r_{q}}^{r_p} \cdot \tau_{q}^{r_{q}}$
satisfy the following properties:
\begin{itemize}
    \item For $1\leq j\leq r_q$:  $\tau_{q}^{r_q}\cdot \tau_{p}^{r_p}(j)=-\tau_{r_{q}+r_{p}}^{r_q} \cdot \tau_{p+r_{q}}^{r_{p}} \cdot \tau_{q}^{r_{q}}(j)$;
    \item For $r_q+1\leq j\leq n$: $\tau_{q}^{r_q}\cdot \tau_{p}^{r_p}(j)=\tau_{r_{q}+r_{p}}^{r_q} \cdot \tau_{p+r_{q}}^{r_{p}} \cdot \tau_{q}^{r_{q}}(j)$.
\end{itemize}
Then, we get the following result:
$$\tau_{q}^{r_{q}} \cdot \tau_{p}^{r_{p}}=  \tau_{r_{q}}^{-r_{q}} \cdot \tau_{r_{q}+r_{p}}^{r_{q}} \cdot \tau_{p+r_{q}}^{r_{p}} \cdot \tau_{q}^{r_{q}},$$
in case $q-r_q\geq p$ and  $0<r_p<p$.
\\

Now, we turn to the second subcase of the case $0<r_p<p$.
\\

$\bullet$ $r_{p} \leqslant q-r_{q} \leqslant p$
$$\tau_{q}^{r_{q}} \cdot \tau_{p}^{r_{p}}= (s_{0} \cdot t_{q})^{r_{q}} \cdot (s_{0} \cdot t_{p})^{r_{p}}, $$

By the same way, we can see that the permutation presentations of  $\tau_{q}^{r_q}\cdot \tau_{p}^{r_p}$ and  $\tau_{r_{q}}^{p-q+r_q} \cdot \tau_{r_{q}+r_{p}}^{q-p} \cdot \tau_{q}^{r_{q}+r_{p}}$
satisfy the following  properties:
\begin{itemize}
    \item For $1\leq j\leq r_q$:  $\tau_{q}^{r_q}\cdot \tau_{p}^{r_p}(j)=-\tau_{r_{q}}^{p-q+r_q} \cdot \tau_{r_{q}+r_{p}}^{q-p} \cdot \tau_{q}^{r_{q}+r_{p}}(j)$;
    \item For $r_q+1\leq j\leq n$: $\tau_{q}^{r_q}\cdot \tau_{p}^{r_p}(j)=\tau_{r_{q}}^{p-q+r_q} \cdot \tau_{r_{q}+r_{p}}^{q-p} \cdot \tau_{q}^{r_{q}+r_{p}}(j)$.
\end{itemize}
Then, we get the following result:
$$\tau_{q}^{r_{q}} \cdot \tau_{p}^{r_{p}}= \tau_{r_{q}}^{p-q} \cdot \tau_{r_{q}+r_{p}}^{q-p} \cdot \tau_{q}^{r_{q}+r_{p}},$$
in case $r_p\leqslant q-r_q\leqslant p$ and  $0<r_p<p$.
\\

Now, we turn to the third subcase of the case $0<r_p<p$.
\\

$\bullet$ $q-r_{q} \leqslant r_{p}$
$$\tau_{q}^{r_{q}}\cdot \tau_{p}^{r_{p}}= (s_{0} \cdot t_{q})^{r_{q}}\cdot (s_{0} \cdot t_{p})^{r_{p}} .$$

By the same way, we can see that the permutation presentations of  $\tau_{q}^{r_q}\cdot \tau_{p}^{r_p}$ and  $\tau_{p+r_{q}-q}^{r_q+r_p-q} \cdot \tau_{r_q}^{p-r_p} \cdot \tau_{q}^{r_{q}+r_{p}-q}$
satisfy the following properties:
\begin{itemize}
    \item For $1\leq j\leq r_q$:  $\tau_{q}^{r_q}\cdot \tau_{p}^{r_p}(j)=\tau_{p+r_{q}-q}^{r_q+r_p-q} \cdot \tau_{r_q}^{p-r_p} \cdot \tau_{q}^{r_{q}+r_{p}-q}(j)$;
    \item For $r_q+1\leq j\leq n$: $\tau_{q}^{r_q}\cdot \tau_{p}^{r_p}(j)=-\tau_{p+r_{q}-q}^{r_q+r_p-q} \cdot \tau_{r_q}^{p-r_p} \cdot \tau_{q}^{r_{q}+r_{p}-q}(j)$.
   
\end{itemize}
Then, we get the result:
$$\tau_{q}^{r_{q}} \cdot \tau_{p}^{r_{p}} = \tau_{p+r_{q}-q}^{r_{q}+r_{p}-q} \cdot \tau_{r_{q}}^{p-r_{p}-r_{q}} \cdot \tau_{q}^{r_{q}+r_{p}-p-q},$$
in case $q-r_q\leqslant r_p$ and  $0<r_p<p$.
\\

    \item  \text{The case} \quad
$r_{p}=-p$.\\

The case is divided into the following  two subcases, depending on the value of $q-r_q$ compares to the values of $p$:
\begin{itemize}
    \item $q-r_q\geq p$;
    \item $q-r_q < p$.
\end{itemize}
Now, we start with the first subcase:

$\bullet$ $q-r_{q}\geq p.$
\\

Consider the permutation of $\tau_{q}^{r_{q}} \cdot \tau_{p}^{-p}$:

$$
 \tau_{q}^{r_q}=
[-(q-r_{q}+1); ~-(q-r_{q}); \ldots; ~-q; ~1; ~2;\ldots; ~q-r_q]
$$

$$
 \tau_{p}^{-p}=
[-1; ~-2; \ldots; ~-p; ~p+1; ~p+2; \ldots; ~q].
$$
Therefore the permutation presentation of $\tau_{q}^{r_q}\cdot \tau_{p}^{-p}$:
$$1 \rightarrow -(q-r_{q}+1) \rightarrow -(q-r_{q}+1).$$
$$r_{q} \rightarrow -q \rightarrow -q.$$
$$r_{q}+1 \rightarrow 1 \rightarrow -1.$$
$$r_{q}+p \rightarrow p \rightarrow -p.$$
Now, consider the permutation presentation of $\tau_{r_q}^{-r_q}\cdot \tau_{r_{q}+p}^{-(r_q+p)}$:
$$
\tau_{r_q}^{-r_q}\cdot \tau_{r_q+p}^{-(r_q+p)}=[1; ~2; \ldots; ~r_q; ~-(r_q+1); \ldots -(r_q+p); ~r_q+p+1; \ldots; ~q].      
$$
Hence we get $$\tau_{q}^{r_{q}} \cdot \tau_{p}^{-p}=\tau_{r_q}^{-r_q}\cdot \tau_{r_{q}+p}^{-(r_q+p)}\cdot \tau_{q}^{r_{q}},$$
in case $q-r_q\geq p$ and  $r_p=-p$.
\\

 $\bullet$ Now we turn to the subcase $q-r_{q}<p.$\\

Since $q-r_q<p$ is the permutation presentation of $\tau_{q}^{r_{q}} \cdot \tau_{p}^{-p}$ as follow:

$$1 \rightarrow -(q-r_{q}+1) \rightarrow q-r_{q}+1.$$
$$p+r_{q}-q \rightarrow -p \rightarrow p.$$
$$p+r_{q}-q+1 \rightarrow -(p+1) \rightarrow -(p+1).$$
$$r_{q} \rightarrow -q \rightarrow -q.$$
$$r_{q}+1 \rightarrow 1 \rightarrow -1.$$
$$q \rightarrow q-r_{q} \rightarrow -(q-r_{q}).$$

Now, considering the permutation presentation of $\tau_{r_{q}+p-q}^{-(r_q+p-q)}\cdot \tau_{r_q}^{-r_q}\cdot \tau_{q}^{-q}$:
$$
 \tau_{r_q+p-q}^{-(r_q+p-q)}\cdot \tau_{r_q}^{-r_q}\cdot \tau_{q}^{-q}=[-1; ~-2; \ldots; ~-(r_q+p-q); ~r_q+p-q+1; \ldots ~r_q; ~-(r_q+1); \ldots; ~-q].      
$$
Hence we get $$\tau_{q}^{r_{q}} \cdot \tau_{p}^{-p}=\tau_{r_{q}+p}^{-(r_q+p)}\cdot \tau_{r_q}^{-r_q}\cdot  \tau_{q}^{r_{q}-q},$$
in case $q-r_q<p$ and  $r_p=-p$.
\\

    \item  \text{The case} \quad $-p < r_{p} < 0$\\
    
    In this case, we have the following three subcases, depending on how the value of $q-r_q$ compares to the values of $p$ and $p+r_p$:
\begin{itemize}
    \item $q-r_q\geq p$;
    \item $r_p+p\leq q-r_q\leq p$;
    \item $q-r_q\leq r_p+p$.
\end{itemize}
Now, we start with the first subcase:

$\bullet$ $q-r_{q} \geq p.$
$$\tau_{q}^{r_{q}} \cdot \tau_{p}^{r_{p}} = \tau_{q}^{r_{q}} \cdot \tau_{p}^{p+r_{p}} \cdot \tau_{p}^{-p}$$

Consider the exchange law for $\tau_{q}^{r_{q}} \cdot \tau_{p}^{p+r_{p}}$  where $0<p+r_{p}<p$ and $q-r_{q} \geq p$.

$$\tau_{q}^{r_{q}} \cdot \tau_{p}^{p+r_{p}}= \tau_{r_{q}}^{-r_{q}} \cdot \tau_{r_{q}+r_{p}+p}^{r_{q}} \cdot \tau_{p+r_{q}}^{p+r_{p}} \cdot \tau_{q}^{r_{q}}.$$
Then we get:
$$ \tau_{q}^{r_{q}} \cdot \tau_{p}^{p+r_{p}} \cdot \tau_{p}^{-p}= \tau_{r_{q}}^{-r_{q}} \cdot \tau_{r_{q}+r_{p}+p}^{r_{q}} \cdot \tau_{p+r_{q}}^{p+r_{p}} \cdot \tau_{q}^{r_{q}} \cdot \tau_{p}^{-p}$$
Then, by using the exchange law for $\tau_{q}^{r_{q}} \cdot \tau_{p}^{-p}$ where $q-r_{q} \geq p$ we get:
$$\tau_{q}^{r_{q}} \cdot \tau_{p}^{r_{p}} =  \tau_{r_{q}}^{-r_{q}} \cdot \tau_{r_{q}+r_{p}+p}^{r_{q}} \cdot \tau_{p+r_{q}}^{p+r_{p}} \cdot \tau_{r_{q}}^{-r_{q}} \cdot \tau_{p+r_{q}}^{-p-r_{q}} \cdot \tau_{q}^{r_{q}}.$$

$$ =  \tau_{r_{q}}^{-r_{q}} \cdot \tau_{r_{q}+r_{p}+p}^{r_{q}} \cdot \tau_{p+r_{p}}^{-(p+r_{p})} \cdot \tau_{p+r_{q}+r_{p}}^{-(p+r_{q}+r_{p})} \cdot \tau_{p+r_{q}}^{p+r_{p}} \cdot \tau_{p+r_{q}}^{-(p+r_{q})} \cdot \tau_{q}^{r_{q}}.$$

$$ =  \tau_{r_{q}}^{-r_{q}} \cdot \tau_{r_{q}}^{-r_{q}} \cdot \tau_{p+r_{p}+r_{q}}^{-(p+r_{p}+r_{q})} \cdot \tau_{p+r_{p}+r_{q}}^{r_{q}} \cdot \tau_{p+r_{q}+r_{p}}^{-(p+r_{q}+r_{p})} \cdot \tau_{p+r_{q}}^{p+r_{p}} \cdot \tau_{p+r_{q}}^{-(p+r_{q})} \cdot \tau_{q}^{r_{q}}.$$

Then, we get the result:
$$\tau_{q}^{r_{q}} \cdot \tau_{p}^{r_{p}} = \tau_{r_{q}+r_{p}+p}^{r_{q}} \cdot \tau_{p+r_{q}}^{r_{p}-r_{q}} \cdot \tau_{q}^{r_{q}},$$
in case $q-r_q\geq p$ and  $-p<r_p<0$.
\\

Now, we turn to the second subcase of the case $-p<r_p<0$.
\\

$\bullet$ $r_{p}+p \leqslant q-r_{q} \leqslant p.$

$$\tau_{q}^{r_{q}}= \tau_{q}^{r_{q}}\cdot \tau_{p}^{p+r_{p}} \cdot \tau_{p}^{-p}.$$

$$\tau_{q}^{r_{q}}\cdot \tau_{p}^{p+r_{p}} \cdot \tau_{p}^{-p} = \tau_{r_{q}}^{p-q} \cdot \tau_{r_{q}+p+r_{p}}^{q-p}\cdot \tau_{q}^{r_{q}+p+r_{p}}\cdot \tau_{p}^{-p}.$$

Consider the exchange law for $\tau_{q}^{r_{q}+p+r_{p}}\cdot \tau_{p}^{-p}.$  where $q-r_{q}<p$ .

$$=\tau_{r_{q}}^{p-q} \cdot \tau_{r_{q}+p+r_{p}}^{q-p}\cdot \tau_{p+r_{q}+p+r_{p}-q}^{-p-r_{q}-(p+r_{p}+q)} \cdot \tau_{r_{q}+p+r_{p}}^{-(r_{q}+p+r_{p})} \cdot \tau_{q}^{r_{q}+p+r_{p}-q}$$

Consider the exchange law for $\tau_{r_{q}+p+r_{p}}^{q-p}\cdot \tau_{p+r_{q}+p+r_{p}-q}^{-p-r_{q}-(p+r_{p}+q)}$  where $q-r_{q}=p$ .

$$=\tau_{r_{q}}^{p-q} \cdot \tau_{q-p}^{-(q-p)}\cdot \tau_{r_{q}+p+r_{p}-q}^{-(r_{q}+p+r_{p})} \cdot \tau_{r_{q}+p+r_{p}}^{q-p} \cdot \tau_{r_{q}+p+r_{p}}^{-(r_{q}+p+r_{p})} \cdot \tau_{q}^{r_{q}+p+r_{p}-q}.$$

$$\tau_{r_{q}}^{p-q} \cdot \tau_{q-p}^{-(q-p)}= \tau_{r_{q}}^{r_{q}-(q-p)}\cdot \tau_{r_{q}}^{-r_{q}} \cdot \tau_{q-p}^{-(q-p)}$$

Consider the exchange law for $\tau_{r_{q}}^{r_{q}-(q-p)}\cdot \tau_{q-p}^{-(q-p)} $, where $q-r_q=p$.

$$= \tau_{r_{q}-(q-p)}^{-(r_{q}-(q-p))} \cdot \tau_{r_{q}}^{-r_{q}} \cdot \tau_{r_{q}}^{r_{q}-(q-p)} \cdot \tau_{r_{q}}^{-r_{q}} \cdot \tau_{r_{q}+p+r_{p}}^{q-p}\cdot \tau_{q}^{r_{q}+p+r_{p}-q}.$$
Then, we get the result:
$$\tau_{q}^{r_{q}}\cdot \tau_{p}^{r_{p}} = \tau_{r_{q}-q+p}^{-r_{q}+q-p}\cdot \tau_{r_{q}}^{r_{q}+p-q}\cdot \tau_{r_{q}+p+r_{p}}^{q-p}\cdot \tau_{q}^{r_{q}+p+r_{p}-q},$$
in case $r_p+p\leqslant q-r_q\leqslant p$ and  $-p<r_p<0$.
\\

Now, we turn to the third subcase of the case $0<r_p<p$.
\\

$\bullet$ $q-r_{q} \leqslant r_{p}+p.$

$$\tau_{q}^{r_{q}}\cdot \tau_{p}^{r_{p}} = \tau_{q}^{r_{q}}\cdot \tau_{p}^{p+r_{p}}\cdot \tau_{p}^{-p}$$

By the exchange law of the case $0 \leqslant r_{p} \leqslant p$ where $q-r_{q}\leqslant r_{p}$ we get:

$$\tau_{q}^{r_{q}}\cdot \tau_{p}^{p+r_{p}}\cdot \tau_{p}^{-p}= \tau_{p+r_{q}-q}^{r_{q}+p+r_{p}-q} \cdot \tau_{r_{q}}^{p-(p+r_{p})-r_{q}} \cdot \tau_{q}^{r_{q}+p+r_{p}-p-q} \cdot \tau_{p}^{-p}.$$

$$=\tau_{p+r_{q}-q}^{r_{q}+p+r_{p}-q} \cdot \tau_{r_{q}}^{-r_{p}-r_{q}} \cdot \tau_{q}^{r_{q}+r_{p}-q}\cdot \tau_{p}^{-p}. $$

$$\tau_{q}^{r_{q}+r_{p}-q} \cdot \tau_{p}^{-p}= \tau_{q}^{r_{q}+r_{p}} \cdot \tau_{q}^{-q} \cdot \tau_{p}^{-p}.
$$

Consider the exchange law for $\tau_{q}^{r_{q}+r_{p}} \cdot \tau_{p}^{-p} $ where, $q-r_q <p$

$$=\tau_{p+r_{q}-q}^{r_{q}+p+r_{p}-q} \cdot \tau_{r_{q}}^{-r_{p}-r_{q}} \cdot \tau_{p+r_{q}+r_{p}-q}^{-(p+r_{q}+r_{p}-q)}\cdot \tau_{r_{q}+r_{p}}^{-(r_{q}+r_{p})} \cdot \tau_{q}^{r_{q}+r_{p}} \cdot \tau_{q}^{-q}\cdot \tau_{q}^{-q}$$

$$\tau_{r_{q}}^{-r_{p}-r_{q}} \cdot \tau_{p+r_{q}+r_{p}-q}^{-(p+r_{q}+r_{p}-q)}= \tau_{r_{q}}^{-r_{p}} \cdot \tau_{r_{q}}^{-r_{q}}\cdot \tau_{p+r_{q}+r_{p}-q}^{-(p+r_{q}+r_{p}-q)}.  $$

Consider the exchange law for $\tau_{r_{q}}^{-r_{p}} \cdot \tau_{p+r_{q}+r_{p}-q}^{-(p+r_{q}+r_{p}-q)} $ where $q-r_q >p$

$$=\tau_{p+r_{q}-q}^{r_{q}+p+r_{p}-q} \cdot \tau_{-r_{p}}^{r_{p}} \cdot \tau_{p+r_{q}+r_{p}-q-r_{p}}^{-(p+r_{q}-q)} \cdot \tau_{r_{q}}^{-r_{p}} \cdot \tau_{r_{q}}^{-r_{q}} \cdot \tau_{r_{q}+r_{p}}^{-(r_{q}+r_{p})} \cdot \tau_{q}^{r_{q}+r_{p}}.$$

$$=\tau_{p+r_{q}-q}^{r_{q}+p+r_{p}-q} \cdot \tau_{-r_{p}}^{r_{p}} \cdot \tau_{p+r_{q}-q}^{-(p+r_{q}-q)} \cdot \tau_{r_{q}}^{-r_{p}} \cdot \tau_{r_{q}}^{-r_{q}} \cdot \tau_{r_{q}+r_{p}}^{-(r_{q}+r_{p})} \cdot \tau_{q}^{r_{q}+r_{p}}.$$

Consider the exchange law $\tau_{r_{q}}^{-r_{p}} \cdot \tau_{r_{q}+r_{p}}^{-(r_{q}+r_{p})}$ where $q-r_q=p$

$$=\tau_{p+r_{q}-q}^{r_{q}+p+r_{p}-q} \cdot \tau_{-r_{p}}^{r_{p}} \cdot \tau_{p+r_{q}-q}^{-(p+r_{q}-q)} \cdot \tau_{-r_{p}}^{r_{p}} \cdot \tau_{r_{q}+r_{p}-r_{p}}^{-r_{q}} \cdot \tau_{r_{q}}^{-r_{p}} \cdot \tau_{r_{q}}^{-r_{q}}\cdot \tau_{q}^{r_{q}+r_{p}}.$$

Consider the exchange law for $\tau_{p+r_{q}-q}^{r_q+p+r_p-q} \cdot \tau_{-r_{p}}^{r_{p}}$ where $q-r_q=p$

$$=\tau_{r_{q}+p+r_{p}-q}^{-(r_{q}+p+r_{p}-q)} \cdot \tau_{-r_{p}+r_{q}+p+r_{p}-q}^{-(r_{q}+p-q)} \cdot \tau_{p+r_{q}-q}^{r_{q}+p+r_{p}-q} \cdot \tau_{p+r_{q}-q}^{-(p+r_{q}-q)} \cdot \tau_{-r_{p}}^{r_{p}} \cdot  \tau_{r_{q}}^{-r_{q}} \cdot \tau_{r_{q}}^{-r_{p}}\cdot \tau_{r_{q}}^{-r_{q}}\cdot  \tau_{q}^{r_{q}+r_{p}}.$$

$$=\tau_{r_{q}+p+r_{p}-q}^{-(r_{q}+p+r_{p}-q)}  \cdot \tau_{p+r_{q}-q}^{r_{q}+p+r_{p}-q} \cdot \tau_{-r_{p}}^{r_{p}} \cdot   \tau_{r_{q}}^{-r_{p}}\cdot  \tau_{q}^{r_{q}+r_{p}}.$$

Consider the exchange law for $\tau_{p+r_{q}-q}^{r_{q}+p+r_{p}-q} \cdot \tau_{-r_{p}}^{r_{p}}$ where $q-r_q=p.$

$$=\tau_{r_{q}+p+r_{p}-q}^{-(r_{q}+p+r_{p}-q)} \cdot \tau_{r_{q}+p+r_{p}-q}^{-(r_{q}+p+r_{p}-q)} \cdot \tau_{-r_{p}+r_{q}+r_{p}+p-q}^{-(r_{q}+p-q)}\cdot \tau_{p+r_{q}-q}^{r_{q}+p+r_{p}-q} \cdot \tau_{r_{q}}^{-r_{p}} \cdot \tau_{q}^{r_{q}+r_{p}}.$$

$$=\tau_{r_{q}+p-q}^{-(r_{q}+p-q)} \cdot \tau_{p+r_{q}-q}^{r_{q}+p+r_{p}-q} \cdot \tau_{r_{q}}^{-r_{p}} \cdot \tau_{q}^{r_{q}+r_{p}}.$$
Then, we get the result:
$$\tau_{q}^{r_q} \cdot \tau_{p}^{r_p}=\tau_{r_{q}+p-q}^{r_{p}} \cdot \tau_{r_{q}}^{-r_{p}} \cdot \tau_{q}^{r_{q}+r_{p}},$$
in case $q-r_q\leqslant r_p+p$ and  $-p<r_p<0$.
\end{enumerate}

\end{proof}
 
 \section{The presentation of the parabolic subgroup $\dot{S}_{n}$ of $B_{n}$ by the generalized standard OGS}\label{parabolic-subgroup}

This section deals with the properties of the parabolic subgroup of $B_{n}$ which does not contain $s_{0}$ in its reduced Coxeter presentation.\\

\begin{definition} \label{SnBn}
Denote by $\dot{S}_n$ the parabolic subgroup of $B_n$ which is generated by $s_1, s_2, \ldots, s_{n-1}$ (i.e, The elements of $B_n$ which can be written without any occurrence of $s_0$).
\end{definition}

\begin{remark}
By the Coxeter diagram of $B_n$ one can easily  see that the subgroup $\dot{S}_n$  which is defined in Definition \ref{SnBn} is isomorphic to the symmetric group $S_n$.
\end{remark}

Now, we show the generalized standard $OGS$ decomposition of the elements of the subgroup $\dot{S}_{n}$ of $B_n$ by using the following two lemmas\\

\begin{lemma}\label{tau-1+1}
 
Consider the elements of $B_{n}$ are presented by the generalized standard OGS presentation, as it is described in Theorem \ref{ogs-bn}, then the following holds:\\

$\bullet$ $\tau_{k}^{-1} \cdot \tau_{k+r}$ is the element $\prod_{j=k}^{k+r-1} s_{j}$ of $\dot{S}_{n}$ for every  $1\leqslant i \leqslant k$.\\
\end{lemma}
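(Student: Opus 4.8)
The plan is to verify the claimed identity by computing the one-line (signed) permutation presentation of both sides and checking they agree. By Remark~2.1.3 we have $\tau_k = s_0\cdot t_k$, and in $S_n$ the element $t_k^{-1}\cdot t_{k+r}$ is known (from the exchange laws for $S_n$, or by a direct check) to equal the ascending word $\prod_{j=k}^{k+r-1}s_j$, which as a permutation is $[1;\,2;\,\ldots;\,k-1;\,k+1;\,k+2;\,\ldots;\,k+r;\,k;\,k+r+1;\,\ldots;\,n]$. So the content of the lemma is really that the two extra copies of $s_0$ coming from $\tau_k^{-1}$ and $\tau_{k+r}$ cancel each other, leaving a signless element of $\dot S_n$.

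First I would write down $\tau_k^{-1}$ explicitly as a signed permutation: from Remark~\ref{tau-power} with $i_k=-1$ (the case $-k<i_k<0$), $\tau_k^{-1}(j)=j+1$ for $1\le j\le k-1$, $\tau_k^{-1}(k)=-1$, and $\tau_k^{-1}(j)=j$ for $k+1\le j\le n$; equivalently $\tau_k^{-1}=[2;\,3;\,\ldots;\,k;\,-1;\,k+1;\,\ldots;\,n]$. Next, from the Remark following Definition~\ref{tau}, $\tau_{k+r}=[-(k+r);\,1;\,2;\,\ldots;\,k+r-1;\,k+r+1;\,\ldots;\,n]$. Then I would compose these in left-to-right order (per the multiplication convention fixed in Definition~\ref{sn}): track the image of each $j$ under $\tau_k^{-1}$ and then apply $\tau_{k+r}$. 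For $1\le j\le k-1$: $j\mapsto j+1\mapsto j$ (since $2\le j+1\le k\le k+r-1$, so $\tau_{k+r}$ sends it to $(j+1)-1=j$). For $j=k$: $k\mapsto -1\mapsto k+r$ (since $\tau_{k+r}(1)=-(k+r)$ gives $\tau_{k+r}(-1)=k+r$). For $k+1\le j\le k+r$: $j\mapsto j\mapsto j-1$. For $k+r+1\le j\le n$: $j\mapsto j\mapsto j$. This yields exactly $[1;\,2;\,\ldots;\,k-1;\,k+r;\,k;\,k+1;\,\ldots;\,k+r-1;\,k+r+1;\,\ldots;\,n]$, which I then identify with $\prod_{j=k}^{k+r-1}s_j$ by checking this word has the same one-line form (it cyclically shifts $k,k+1,\ldots,k+r$ with $k$ moved to the end of that block), confirming no negative entries appear. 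Since the product has no sign and lies in the parabolic subgroup generated by $s_1,\dots,s_{n-1}$, it is an element of $\dot S_n$, and by the normal form in Definition~\ref{sn} its reduced expression is the claimed $\prod_{j=k}^{k+r-1}s_j$ (a single run, hence reduced of length $r$).

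The main obstacle is purely bookkeeping: making sure the composition is done in the correct order and that the boundary indices ($j=k$, $j=k+r$) are handled correctly, in particular that the $s_0$ in $\tau_k^{-1}$ (which produces the sign on the image of $k$) is precisely undone by the $s_0$ hidden in $\tau_{k+r}=s_0\cdot t_{k+r}$ acting on the $-1$. There is also a minor notational point — the statement as written ends with ``for every $1\le i\le k$'', where $i$ does not appear; I would either drop that clause or read it as ranging over the implicit index, and the proof does not depend on it. No genuinely hard step is expected; the result is a direct computation once the two permutations are written out.
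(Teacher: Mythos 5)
Your argument is correct, but it proceeds differently from the paper. The paper never touches the permutation presentation: it first treats $r=1$ at the level of Coxeter words, cancelling the two copies of $s_0$ in $\tau_k^{-1}\cdot\tau_{k+1}=(s_{k-1}\cdots s_1 s_0)\cdot(s_0 s_1\cdots s_{k-1}s_k)=s_k$, and then obtains the general case by the telescoping factorization $\tau_k^{-1}\cdot\tau_{k+r}=(\tau_k^{-1}\tau_{k+1})(\tau_{k+1}^{-1}\tau_{k+2})\cdots(\tau_{k+r-1}^{-1}\tau_{k+r})=s_k s_{k+1}\cdots s_{k+r-1}$. You instead compute the signed one-line forms of $\tau_k^{-1}$ and $\tau_{k+r}$ from Remark \ref{tau-power} and compose them under the left-to-right convention, then match the result against the one-line form of $\prod_{j=k}^{k+r-1}s_j$; your composition is carried out correctly, including the key step $\tau_{k+r}(-1)=k+r$ which realizes the cancellation of the two hidden $s_0$'s. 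The paper's word-level telescoping is shorter and immediately exhibits the product as a Coxeter word in $s_k,\dots,s_{k+r-1}$ (which is what later proofs, e.g.\ of Lemma \ref{tau-i+i} and Theorem \ref{main-ogs-sn}, actually reuse), while your computation is more self-contained and gives the explicit permutation. One small bookkeeping slip: the one-line form you quote in your opening paragraph for $t_k^{-1}\cdot t_{k+r}$, namely $[1;\ldots;k-1;k+1;\ldots;k+r;k;\ldots]$, and the parenthetical description ``$k$ moved to the end of the block'' correspond to the right-to-left convention, i.e.\ to the inverse of the element; under the paper's left-to-right convention the correct form is the one your own composition produces, $[1;\ldots;k-1;k+r;k;k+1;\ldots;k+r-1;\ldots]$, and that is indeed $s_k s_{k+1}\cdots s_{k+r-1}$, so the final identification stands once this inconsistency is cleaned up.
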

\begin{proof}
We start with the case of $r=1$: $$ \tau_{k} = s_{0}\cdot s_{1} \cdots s_{k-1}$$ 
Then,
$$\tau_{k}^{-1}\cdot \tau_{k+1} = s_{k-1}\cdots s_{1}\cdot s_{0} \cdot s_{0} \cdot s_{1} \cdots s_{k-1} \cdot s_{k}$$ 
$$\tau_{k}^{-1}\cdot \tau_{k+1} = s_{k}$$

Now, we turn to the case: $\tau_{k}^{-1}\cdot \tau_{k+r}$.\\

The normal form of the element  $\tau_{k}^{-1}\cdot \tau_{k+r}$ as follow:
$$\tau_{k}^{-1}\cdot \tau_{k+r} = (\tau_{k}^{-1}\cdot \tau_{k+1}) \cdot (\tau_{k+1}^{-1}\cdot \tau_{k+2}) \cdots (\tau_{k+r-1}^{-1}\cdot \tau_{k+r})$$
Then we conclude :
$$ \tau_{k}^{-1}\cdot \tau_{k+r} = s_{k} \cdot s_{k+1} \cdots s_{k+r-1}$$

\end{proof}
\begin{lemma}\label{tau-i+i}
 Consider the elements of $B_{n}$ are presented by the generalized the standard OGS presentation, as it is described in Theorem \ref{ogs-bn}, then the following holds:\\
 
$\bullet$ $\tau_{k_{1}}^{-i} \cdot \tau_{k_{2}}^{i}$ is the element $\prod_{j_{1}=k_{1}}^{k_{2}-1} \prod_{j_{2}=0}^{j_{1}+i-1} s_{j_{1}-j_{2}}$ of $\dot{S}_{n}$ for every $1 \leqslant i \leqslant k$.\\
\end{lemma}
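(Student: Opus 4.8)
The plan is to reduce the general statement to the $i=1$ case already established in Lemma \ref{tau-1+1}, using the same telescoping idea together with a conjugation argument. First I would observe that the claimed word $\prod_{j_{1}=k_{1}}^{k_{2}-1}\prod_{j_{2}=0}^{j_{1}+i-1}s_{j_{1}-j_{2}}$ factors naturally as a product of $k_2-k_1$ blocks, the $j_1$-th block being $w_{j_1} := s_{j_1}\cdot s_{j_1-1}\cdots s_{j_1+i-k_1}\cdots$, i.e.\ the descending run $\prod_{j_2=0}^{j_1+i-1}s_{j_1-j_2}$, which one recognizes (compare the $norm$ notation of $S_n$ in Definition \ref{sn}) as a single ``staircase'' factor. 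The natural telescoping identity to exploit is
$$\tau_{k_{1}}^{-i}\cdot\tau_{k_{2}}^{i}=\prod_{j_1=k_1}^{k_2-1}\left(\tau_{j_1}^{-i}\cdot\tau_{j_1+1}^{i}\right),$$
which holds because consecutive factors $\tau_{j_1+1}^{i}\cdot\tau_{j_1+1}^{-i}=1$ cancel. So it suffices to prove the single-step identity $\tau_{j}^{-i}\cdot\tau_{j+1}^{i}=\prod_{j_2=0}^{j+i-1}s_{j-j_2}$ for every $j$ with $k_1\le j\le k_2-1$ and $1\le i\le j$, and then the lemma follows by concatenating.

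For the single-step identity I would argue by induction on $i$. The base case $i=1$ is exactly Lemma \ref{tau-1+1} (with $r=1$ there), giving $\tau_j^{-1}\cdot\tau_{j+1}=s_j$. For the inductive step, write $\tau_j^{-i}\cdot\tau_{j+1}^{i}=\tau_j^{-1}\cdot\left(\tau_j^{-(i-1)}\cdot\tau_{j+1}^{i-1}\right)\cdot\tau_{j+1}$. Here I would rather verify the identity at the level of the permutation presentation of $B_n$, which is cleaner than manipulating Coxeter words: using the explicit formulas for $\tau_k^{i_k}$ in Remark \ref{tau-power}, one checks directly that $\tau_j^{-i}\cdot\tau_{j+1}^{i}$ acts as the permutation of $S_n$ sending $\ell\mapsto\ell$ for $\ell\le j+i-k_1$ appropriately and realizing the cycle structure matching the right-hand side — in fact $\tau_j^{-i}\cdot\tau_{j+1}^{i}$ should turn out to be the transposition-type element $t$-word whose action fixes all of $[\pm n]$ outside a small window and cyclically shifts a block of size $i+1$, so that it lies in $\dot S_n$ (no sign changes) and equals the stated descending run of generators. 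Matching this against the normal form description for $S_n$ in Definition \ref{sn} identifies the word precisely.

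The main obstacle I anticipate is bookkeeping: one must be careful that all the subscripts $j_1-j_2$ that appear stay in the legal range $0\le j_1-j_2\le n-1$ and, more importantly, that the resulting element genuinely lies in $\dot S_n$ rather than merely in $B_n$ — i.e.\ that the two copies of $s_0$ (one from each $\tau$) always cancel in pairs. For the single step with $i>1$ this requires checking that in $\tau_j^{-i}\cdot\tau_{j+1}^{i}$ the element $1$ is never sent to a negative value, which is where the hypothesis $i\le j$ (equivalently $i\le k$) is used; when $i=k_1$ the element $\tau_{k_1}^{-i}$ already has $\tau_{k_1}^{-k_1}$ acting by total sign reversal on $\{1,\dots,k_1\}$, and one must confirm the signs introduced by $\tau_{k_2}^{i}$ cancel them. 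Once the single-step identity and its membership in $\dot S_n$ are secured, the telescoping product immediately gives the double-product formula, and associativity of the concatenation of descending runs finishes the proof.
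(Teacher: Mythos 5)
Your plan coincides with the paper's in its skeleton: the base case is Lemma \ref{tau-1+1}, and everything is reduced to the one-step identity $\tau_{j}^{-i}\cdot\tau_{j+1}^{i}=s_{j}\cdot s_{j-1}\cdots s_{j-i+1}$; your explicit telescoping $\tau_{k_{1}}^{-i}\cdot\tau_{k_{2}}^{i}=\prod_{j=k_{1}}^{k_{2}-1}\bigl(\tau_{j}^{-i}\cdot\tau_{j+1}^{i}\bigr)$ is exactly the reduction the paper leaves implicit (its inductive step is only written for $k_{2}=k_{1}+1$), so making it explicit is a small improvement. Where you genuinely diverge is in how the one-step identity is proved: the paper inducts on $i$, inserting $\tau_{j}^{-1}\cdot(\tau_{j}^{-i_{0}}\cdot\tau_{j+1}^{i_{0}})\cdot\tau_{j+1}$ and simplifying the resulting Coxeter word using the commutation and braid relations, whereas you propose to verify it on the signed-permutation presentation via Remark \ref{tau-power}. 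That route works and in fact makes the induction on $i$ unnecessary: from Remark \ref{tau-power} both sides are the element of $\dot{S}_{n}$ fixing $1,\ldots,j-i$ and $j+2,\ldots,n$, sending $m\mapsto m+1$ for $j-i+1\leq m\leq j$ and $j+1\mapsto j-i+1$, with no sign changes (the $s_{0}$'s cancel even in the boundary case $i=k_{1}$, as you anticipate). What your proposal does not do is carry out that computation — ``one checks directly'' and ``should turn out to be'' is precisely where the content of the lemma sits — and the details you do state are off: the fixed region is $\ell\leq j-i$ (together with $\ell\geq j+2$), not ``$\ell\leq j+i-k_{1}$'', and you reproduce the statement's inner bound $j_{2}\leq j_{1}+i-1$ literally, which for $i\geq 2$ would call for generators $s_{j_{1}-j_{2}}$ with negative indices; that bound is a typo in the statement, and the run at each $j_{1}$ has length $i$, i.e.\ $\prod_{j_{2}=0}^{i-1}s_{j_{1}-j_{2}}$, which is what the paper's inductive step actually produces ($s_{k}\cdot s_{k-1}\cdots s_{k-i_{0}}$) and what is used later in Theorems \ref{main-ogs-sn} and \ref{standard-bn-sn}. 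With the one-step permutation check written out from Remark \ref{tau-power} and the indices corrected, your argument is complete and is arguably cleaner than the paper's word manipulation, at the cost of leaving the Coxeter-word (normal form) structure less visible.
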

\begin{proof}
The proof is by induction on the value of $i$.\\

The case of  $i=1$ has been proved in Lemma \ref{tau-1+1}\\

Assume by induction that the lemma holds for  $i=i_0$, i.e.,
$$\tau_{k_{1}}^{-i_0} \cdot \tau_{k_{2}}^{i_0}=\prod_{j_{1}=k_{1}}^{k_{2}-1} \prod_{j_{2}=0}^{j_{1}+i_0-1} s_{j_{1}-j_{2}}.$$

Now we prove the lemma for $i=i_0+1$. \\

First, notice: 

$$\tau_{k}^{-(i_0+1)} \cdot \tau_{k+1}^{i_0+1} = \tau_{k}^{-1} \cdot (\tau_{k}^{-i_0} \cdot \tau_{k+1}^{i_0}) \cdot \tau_{k+1}$$ 
 
Then by using the induction assumption and using the property $s_p\cdot s_q=s_q\cdot s_p$ for $|p-q|\geq 2$ we have,

$$\tau_{k}^{-1} \cdot (\tau_{k}^{-i_0} \cdot \tau_{k+1}^{i_0}) \cdot \tau_{k+1} = (s_{k-1} \cdots s_{1}\cdot s_{0}) \cdot (s_{k} \cdot s_{k-1} \cdots s_{k-(i_0-1)}) \cdot (s_{0}\cdot s_{1} \cdots s_{k-1} \cdot s_{k}).$$
$$=(s_{k-1} \cdots s_{k-(i_0-2)}\cdot s_{k-(i_0-1)} \cdot s_{k-i_0})\cdot ( s_{k-(i_0+1)}\cdots s_0)\cdot (s_{k} \cdot s_{k-1} \cdots$$ $$s_{k-(i_0-1)})\cdot (s_{0}\cdot s_{1} \cdots s_{k-(i_0+1)})\cdot(s_{k-i_0}\cdot s_{k-(i_0-1)}\cdots s_k).$$ 
$$= (s_{k-1} \cdots s_{k-(i_0-2)}\cdot s_{k-(i_0-1)} \cdot s_{k-i_0})\cdot  (s_{k}\cdot s_{k-1}\cdots s_{k-(i_0-1)}\cdot s_{k-i_0} \cdot s_{k-(i_0-1)} \cdot s_{k-(i_0-1)+1} \cdots s_{k}).$$
Since $$s_{k}\cdot s_{k-1}\cdots s_{k-(i_0-1)}\cdot s_{k-i_0} \cdot s_{k-(i_0-1)} \cdots s_{k}=s_{k-i_0}\cdot s_{k-(i_0-1)}\cdots s_k\cdot s_{k-1}\cdots s_{k-(i_0-1)}\cdot s_{k-i_0}$$ We get:
$$\tau_{k}^{-1} \cdot (\tau_{k}^{-i_0} \cdot \tau_{k+1}^{i_0}) \cdot \tau_{k+1} = s_{k-1} \cdot s_{k-2} \cdots s_{k-i_0} \cdot s_{k-i_0} \cdot s_{k-(i_0-1)} \cdots s_{k-1} \cdot s_{k} \cdot s_{k-1} \cdots s_{k-i_0}.$$
$$=s_{k} \cdot s_{k-1} \cdots s_{k-i_0}.$$

Hence, the lemma holds for $i=i_0+1$, and therefore it holds for every value of $i$.
\end{proof}

\begin{theorem}\label{main-ogs-sn}
The presentation of every element  $\pi\in \dot{S}_n$  by the generalized standard $OGS$ (as it is described in Theorem \ref{ogs-bn}) has the following form:
 \begin{enumerate}
     \item  $\tau_{k_{1}}^{i_{k_{1}}} \cdot \tau_{k_{2}}^{i_{k_{2}}} \cdots \tau_{k_{m}}^{i_{k_{m}}}$  where $-k_{j} \leqslant i_{k_{j}} \leqslant k_{j}-1$, for every $1 \leqslant j \leqslant m$, $i_{k_{1}} < 0$.
    \item $\sum_{j=1}^{m} i_{k_{j}} = 0 $.
    \item $0 \leqslant \sum_{j=r}^{m} i_{k_{j}} \leqslant k_{r-1}$ for\quad $2 \leqslant r \leqslant m$.\\
    
    Where, 3. is equivalent to \\  
    
    $-k_{r} \leqslant \sum_{j=1}^{r} i_{k_{j}} \leqslant 0$ \quad for \quad $1 \leqslant r \leqslant m-1$.
\end{enumerate}
\end{theorem}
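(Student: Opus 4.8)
The plan is to prove the characterization in both directions, identifying $\dot{S}_n$ with the set of \emph{positive} signed permutations $\{\pi\in B_n : \pi(j)>0 \text{ for } 1\le j\le n\}$; this is immediate from $\dot{S}_n=\langle s_1,\dots,s_{n-1}\rangle$ since none of these generators changes a sign, and it gives $|\dot{S}_n|=n!$. It is convenient to restate conditions $2$ and $3$ via the reversed partial sums $S_r:=\sum_{j=r}^{m}i_{k_j}$ for $1\le r\le m+1$ (with $S_{m+1}=0$): then $i_{k_j}=S_j-S_{j+1}$, condition $2$ is $S_1=0$, condition $3$ is $0\le S_r\le k_{r-1}$ for $2\le r\le m$, the inequality $i_{k_1}<0$ becomes $S_2\ge 1$, and $i_{k_j}\ne 0$ becomes $S_j\ne S_{j+1}$. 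A one-line check shows that, given $S_1=0$ together with these inequalities, the admissibility bounds $-k_j\le i_{k_j}<k_j$ hold automatically, so nothing is lost by working with the $S_r$.

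The easier half is to show that \emph{every} OGS word obeying the three conditions represents an element of $\dot{S}_n$. Using $i_{k_j}=S_j-S_{j+1}$ and $S_1=S_{m+1}=0$, the word telescopes:
$$\tau_{k_1}^{i_{k_1}}\cdot\tau_{k_2}^{i_{k_2}}\cdots\tau_{k_m}^{i_{k_m}}=\prod_{r=2}^{m}\left(\tau_{k_{r-1}}^{-S_r}\cdot\tau_{k_r}^{S_r}\right).$$
By condition $3$ each exponent satisfies $0\le S_r\le k_{r-1}<k_r$, so each factor $\tau_{k_{r-1}}^{-S_r}\tau_{k_r}^{S_r}$ is either trivial (when $S_r=0$) or, by Lemma \ref{tau-i+i} applied with $i=S_r$ and $(k_1,k_2)=(k_{r-1},k_r)$, lies in the subgroup $\dot{S}_n$. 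Hence the whole product lies in $\dot{S}_n$.

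For the statement of the theorem itself I would argue that the assignment sending an OGS word obeying the three conditions to the element of $B_n$ it represents is a bijection onto $\dot{S}_n$: it is injective by the uniqueness of the OGS (Theorem \ref{ogs-bn}), and by the previous paragraph it takes values in $\dot{S}_n$, so it suffices to check that the number of such words equals $n!$. In the $(k_j;S_r)$ encoding this is the combinatorial count of pairs consisting of a subset $\{k_1<\dots<k_m\}\subseteq[n]$ and a sequence $0\le S_2,\dots,S_m$ with $S_r\le k_{r-1}$, $S_2\ge 1$, $S_m\ge 1$, and $S_r\ne S_{r+1}$ for $2\le r\le m-1$ (the empty word, $m=0$, giving the identity). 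I would establish that this count equals $n!$ by induction on $n$, distinguishing whether the largest index $n$ occurs among the $k_j$ and, when it does, isolating the terminal block $\tau_{k_{m-1}}^{-S_m}\tau_n^{S_m}$ and what it forces on the remaining data.

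The main obstacle is precisely this exhaustion/counting step. A direct induction on $n$ applied to $\pi\in\dot{S}_n$ itself is blocked: if $\pi(n)=\ell<n$ then the OGS forces $i_n=n-\ell>0$ and $k_m=n$, but the peeled element $\pi\,\tau_n^{-i_n}\in\dot{B}_{n-1}$ is \emph{not} positive — it acquires exactly $i_n$ negative entries — so the inductive hypothesis for $\dot{S}_{n-1}$ cannot be invoked. Passing to the purely combinatorial count circumvents this, but one must still control how peeling off $\tau_n^{S_m}$ deforms the windows $0\le S_r\le k_{r-1}$; alternatively one can strengthen the induction to a description of the $B_n$-OGS of an \emph{arbitrary} element in terms of its set of negative positions and then specialize, at the cost of more bookkeeping. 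In all approaches the interaction between the $\tau_n$-block and the rest of the word is where the real work lies.
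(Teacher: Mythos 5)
Your converse step is fine: the telescoping $\tau_{k_1}^{i_{k_1}}\cdots\tau_{k_m}^{i_{k_m}}=\prod_{r=2}^{m}\bigl(\tau_{k_{r-1}}^{-S_r}\cdot\tau_{k_r}^{S_r}\bigr)$ together with Lemma \ref{tau-i+i} does show that every word satisfying conditions 1--3 lies in $\dot{S}_n$, and your observation that the admissibility bounds $-k_j\le i_{k_j}<k_j$ follow from the window conditions is correct. But that is not the direction the theorem asserts. The statement is that \emph{every} $\pi\in\dot{S}_n$ has its (unique) generalized standard $OGS$ word of this shape, i.e.\ surjectivity of your map onto $\dot{S}_n$, and this is exactly the step you reduce to the claim that the number of admissible words equals $n!$ and then leave as a sketch, explicitly conceding that the peeling/counting induction is the unresolved ``main obstacle.'' As it stands the proposal therefore has a genuine gap: the content of the theorem is not established, only a plausible strategy for it, and the one difficulty you correctly identify (peeling off the $\tau_n$-block from a positive signed permutation destroys positivity, and controlling how removal of the terminal block deforms the windows $0\le S_r\le k_{r-1}$) is precisely what is not carried out.

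The paper avoids enumeration entirely and you could repair your argument the same way: start from $\pi\in\dot{S}_n\cong S_n$ written in the Björner--Brenti normal form as a product of descending blocks $s_{r_j}s_{r_j-1}\cdots s_{r_j-v_j}$ with $r_1<r_2<\cdots<r_z$ and $v_j\le r_j-1$, rewrite each block as $\tau_{r_j}^{-(v_j+1)}\cdot\tau_{r_j+1}^{v_j+1}$ using Lemma \ref{tau-i+i}, cancel or merge adjacent factors $\tau_{r_j+1}^{v_j+1}\cdot\tau_{r_{j+1}}^{-(v_{j+1}+1)}$ when $r_j+1=r_{j+1}$, and then read off conditions 2 and 3 directly from the resulting exponents (the total sum telescopes to $0$, and the partial sums are controlled by induction using $v_{q}+1\le r_{q}$). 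This is constructive for each individual $\pi$, invokes uniqueness of the $OGS$ only to identify the resulting word with the canonical presentation, and needs no count of admissible words; in effect it produces exactly the telescoped form $\prod_r\tau_{k_{r-1}}^{-S_r}\tau_{k_r}^{S_r}$ you wrote down, but starting from the element rather than from the word.
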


\begin{proof}

 Consider $\pi \in \dot{S}_{n}$. Then the presentation of $\pi$ by the normal form as it is  defined in \cite{BB}, Chapter 3.4, is as follow: 
 
 $$\pi = (s_{r_{1}} \cdot s_{r_{1}-1} \cdots s_{r_{1}-v_{1}} )\cdot (s_{r_{2}} \cdot s_{r_{2}-1} \cdots s_{r_{2}-v_{2}}) \cdots (s_{r_{z}} \cdot s_{r_{z}-1} \cdots s_{r_{z}-v_{z}}),$$

where $z$ is the positive integer, $r_{1} < r_{2} < \cdots < r_{z}$, and $0 \leqslant v_{j} \leqslant r_{j}-1$, for every $1 \leqslant j\leqslant z$.
By Lemma \ref{tau-i+i} we have,

$$s_{r_{j}} \cdot s_{r_{j}-1}\cdots s_{r_{j}-v_{j}} = \tau_{r_{j}}^{-(v_{j}+1)}\cdot \tau_{r_{j}+1}^{v_{j}+1}$$

for every $1\leqslant j \leqslant z$. Therefore, 
\begin{equation}\label{tau-tau+1}
\pi = \tau_{r_{1}}^{-(v_{1}+1)} \cdot \tau_{r_{1}+1}^{v_{1}+1} \cdot \tau_{r_{2}}^{-(v_{2}+1)} \cdot \tau_{r_{2}+1}^{v_{2}+1} \cdots \tau_{r_{z}}^{-(v_{z}+1)}\cdot \tau_{r_{z}+1}^{v_{z}+1}.
 \end{equation}
Now, notice for the case where $r_{j}+1=r_{j+1}$ and  $v_{j}=v_{j+1}$ for some $1\leq j\leq z-1$. Then,

\begin{equation}\label{tau-j=tau-j+1}
\tau_{r_j+1}^{v_j+1}\cdot \tau_{r_{j+1}}^{-(v_{j+1}+1)}=\tau_{r_j+1}^{v_j+1}\cdot \tau_{r_j+1}^{-(v_j+1)}=1.
\end{equation}

Hence, we get that Equation \ref{tau-tau+1} is equivalent to 
\begin{equation}\label{tau-tau+1-reduce-j}
    \pi = \tau_{r_{1}}^{-(v_{1}+1)} \cdots \tau_{r_{j}}^{-(v_{j}+1)}\cdot \tau_{r_{j+1}+1}^{v_{j+1}+1}\cdots \tau_{r_{z}+1}^{v_{z}+1}.
\end{equation}

Hence, by substituting the identity which arises from Equation \ref{tau-j=tau-j+1} in Equation \ref{tau-tau+1}, for every $1\leq j\leq z-1$, such that $r_{j}+1=r_{j+1}$ and  $v_{j}=v_{j+1}$, Equation \ref{tau-tau+1} becomes of the following form:

\begin{equation}\label{tau-tau+1-reduced}
  \pi = \tau_{r_{1}}^{-(v_{1}+1)} \cdot \tau_{r_{q_1}+1}^{v_{1}+1} \cdot \tau_{r_{q_1+1}}^{-(v_{q_1}+1)} \cdot \tau_{r_{q_2}+1}^{v_{q_1}+1} \cdots \tau_{r_{q_{z'}+1}}^{-(v_{q_{z'}}+1)}\cdot \tau_{r_{z}+1}^{v_{q_{z'}}+1}.  
    \end{equation}

such that the following holds:
\begin{itemize}
    \item  $1\leq q_1$;
    \item $q_j+1\leq q_{j+1}$ for every $1\leq j\leq z'-1$; 
    \item $q_{z'}=z$.
\end{itemize}

Now, consider the generalized standard $OGS$ presentation of $\pi$ as it is described in Theorem \ref{ogs-bn}.

\begin{equation}\label{ogs-formula}
\pi=\tau_{k_{1}}^{i_{k_{1}}} \cdot \tau_{k_{2}}^{i_{k_{2}}} \cdots \tau_{k_{m}}^{i_{k_{m}}}
\end{equation}
such that  \item $-k_{j} \leqslant i_{k_{j}} \leqslant k_{j}-1$ , for every $1 \leqslant j \leqslant m$.

Then, by concerning the uniqueness of presentation of an element $\pi\in B_n$ by the generalized standard $OGS$, and  using the presentation of $\pi$ as it is presented in Equation \ref{tau-tau+1-reduced}, the following properties are satisfied.
\begin{itemize}
\item
    \begin{equation}\label{equiv0}
     \sum_{j=1}^m i_{k_j}= -(v_1+1)+(v_1+1)+\sum_{j=1}^{z'}(-(v_{q_j}+1)+(v_{q_j}+1))=0.
    \end{equation}
    Hence,  part 2 of the theorem holds.\\
    
    Now, we show  part 3 of the theorem, where we use all notations which were used from the beginning of the proof up to this point. We consider the generalized standard $OGS$ presentation of $\pi$  with the notations of Equation \ref{ogs-formula}. Then, consider the following equation. 
    \begin{equation}\label{part3}
    -k_{r} \leqslant \sum_{j=1}^{r} i_{k_{j}} \leqslant 0
    \quad for \quad 1 \leqslant r \leqslant m.
    \end{equation}
    We prove Equation \ref{part3} by induction on $r$.
    Then, we show that Equation \ref{part3} is equivalent to part 3 of the theorem.
    \item Since $k_{1}= r_{1}$, we have $-k_1\leq i_{k_{1}}= -(v_{1}+1)< 0$. Hence, Equation \ref{part3} holds for $r=1$. Now, we show Equation \ref{part3} for $r=2$, and then we prove it by induction for every $r\leq m$.
    \item  Now, we consider $k_2$ and $i_{k_2}$. Notice,  $k_{2}=r_{q_1}+1$ and by Equation \ref{tau-tau+1-reduced}, either  $$r_{q_1}+1 < r_{{q_1}+1}$$ or $$r_{q_1}+1 = r_{{q_1}+1}.$$
    First, assume  $$r_{q_1}+1 < r_{{q_1}+1}.$$  Then, $i_{k_2}=v_1+1$, and then 
    \begin{equation}\label{nequal}
    i_{k_{1}}+ i_{k_{2}} -(v_1+1)+(v_1+1)=0.
    \end{equation}
    Now, assume $$r_{q_1}+1 = r_{{q_1}+1}.$$  Then, $$\tau_{r_{q_1}+1}^{v_{1}+1} \cdot \tau_{r_{q_1+1}}^{-(v_{q_1}+1)}=\tau_{r_{q_1}+1}^{v_{1}-v_{q_1}}.$$
    Hence, we get $$i_{k_2}={v_{1}-v_{q_1}}.$$ 
    Therefore,
    $$i_{k_{1}}+i_{k_{2}}= -(v_{1}+1)+(v_{1}-v_{q_1})=-(v_{q_1}+1)<0,$$
    and since $$-(v_{q_1}+1)\geq -\tau_{r_{q_1+1}}=-\tau_{r_{q_1}+1}=-k_2,$$
    we get 
    \begin{equation}\label{equal}
    -k_2\leqslant i_{k_{1}} + i_{k_{2}}<0.
    \end{equation}
    Hence, by considering both Equations \ref{nequal} and \ref{equal}
    we get 
     $$-k_{2} \leqslant i_{k_{1}} + i_{k_{2}}\leqslant 0,$$ 
     for every $\pi\in \dot{S}_n.$
     \item Now, assume by induction on $p$, for every $1\leq p\leq p_0$,
     $$-k_{p} \leqslant \sum_{j=1}^{p} i_{k_{j}} \leqslant 0. $$ 
     Consider $k_{{p_0}+1}$ and $i_{k_{{p_0}+1}}$. Notice, either  $k_{{p_0}+1}=r_{q_y}+1$  or $k_{{p_0}+1}=r_{q_{y}+1}$ for some $1\leq y\leq z'$. 
     
     Assume $k_{{p_0}+1}=r_{q_y}+1$. Then by Equation \ref{tau-tau+1-reduced}, either  $$r_{q_y}+1 < r_{{q_y}+1}$$ or $$r_{q_y}+1 = r_{{q_{y}}+1},$$
     for some $1\leq y\leq z'$.
    First, assume  $$r_{q_y}+1 < r_{{q_y}+1}.$$  Then, $i_{k_{p_0+1}}=v_{q_{y-1}}+1$, and then by using Equation \ref{tau-tau+1-reduced}, 
    \begin{equation}\label{equiv-p}
       \sum_{j=1}^{p_0+1}i_{k_j}= -(v_1+1)+(v_1+1)+\sum_{j=1}^{y-1}(-(v_{q_j}+1)+(v_{q_j}+1))=0.
    \end{equation}
    Now, assume $$r_{q_y}+1 = r_{{q_{y}}+1}.$$  Then, 
    \begin{equation}\label{i-k-p-0+1}
    \tau_{r_{q_y}+1}^{v_{q_{y-1}}+1} \cdot \tau_{r_{q_y+1}}^{-(v_{q_y}+1)}=\tau_{r_{q_1}+1}^{v_{q_{y-1}}-v_{q_y}}\quad \text{which implies}\quad i_{k_{p_0+1}}=v_{q_{y-1}}-v_{q_y}
    \end{equation}
    By induction 
    \begin{equation}\label{induction-p0}
    -k_{p_0}\leq \sum_{j=1}^{p_0}i_{k_j}\leq 0,
    \end{equation}
    By the uniqueness of the presentation by the generalized standard $OGS$ (as it is described in Theorem \ref{ogs-bn}), and by using Equation \ref{tau-tau+1-reduced}, the following holds:
    \begin{equation}\label{equiv-p-1}
       \sum_{j=1}^{p_0}i_{k_j}= -(v_1+1)+(v_1+1)+\sum_{j=1}^{y-2}(-(v_{q_j}+1)+(v_{q_j}+1))-(v_{q_{y-1}}+1)=-(v_{q_{y-1}}+1).
    \end{equation}
    If $k_{{p_0}+1}=r_{q_y}+1$, then $k_{p_0}=r_{q_{y-1}+1}$. Then, by Equation \ref{tau-tau+1-reduced}, \\ $v_{q_{y-1}}+1\leq r_{q_{y-1}+1}$. Hence, $$v_{q_{y-1}}+1\leq k_{p_0}.$$ 
    
    Therefore,
    
   $$-k_{p_0+1} =-( r_{{q_{y}}+1})\leq -( v_{q_y}+1)=-(v_{q_{y-1}}+1)+ (v_{q_{y-1}}-v_{q_y})\leq -k_{p_0}+(v_{q_{y-1}}-v_{q_y}).$$ 
   
   By Equation \ref{i-k-p-0+1},  $i_{k_{p_0+1}}=v_{q_{y-1}}-v_{q_y}$, and then 
    by using Equation \ref{induction-p0}, we get
   
   $$-k_{p_0}+(v_{q_{y-1}}-v_{q_y})\leq \sum_{j=1}^{p_0}i_{k_j}+i_{k_{p_0+1}}=\sum_{j=1}^{p_0+1}i_{k_j}$$
   
   Then By using Equations \ref{equiv-p-1}, \ref{i-k-p-0+1} 
   
   $$\sum_{j=1}^{p_0+1}i_{k_j}=\sum_{j=1}^{p_0}i_{k_j}+i_{k_{p_0+1}}-(v_{q_{y-1}}+1)+v_{q_{y-1}}-v_{q_y}=-1-v_{q_y}\leq 0.$$
    
    Now, assume $k_{{p_0}+1}=r_{q_{y}+1}$. 
    Then by Equation \ref{tau-tau+1-reduced}, either  $$r_{q_y}+1 < r_{{q_y}+1}\quad \text{or}\quad r_{q_y}+1 = r_{{q_{y}}+1},$$
     for some $1\leq y\leq z'$.
    In case $r_{q_y}+1 = r_{{q_{y}}+1}$, we have already proved 
    $-k_{p_0+1}\leq \sum_{j=1}^{p_0+1}i_{k_j}\leq 0.$
    Hence, assume $$r_{q_y}+1 < r_{{q_y}+1}.$$
    Then, $i_{k_{p_0+1}}=-(v_{q_{y}}+1)$, and then by using Equation \ref{tau-tau+1-reduced}, 
    
       $$\sum_{j=1}^{p_0+1}i_{k_j}= -(v_1+1)+(v_1+1)+\sum_{j=1}^{y-1}(-(v_{q_j}+1)+(v_{q_j}+1))-(v_{q_{y}}+1)= -(v_{q_{y}}+1)=i_{k_{p_0+1}}.$$
    
    Therefore, 
    
    $$-k_{p_0+1}\leq i_{k_{p_0+1}}=\sum_{j=1}^{p_0+1}i_{k_j}=-(v_{q_{y}}+1)<0$$

    Hence, we get 
    \begin{equation}\label{geq-k-p}
    -k_{p} \leqslant \sum_{j=1}^{p} i_{k_{j}} \leqslant 0  \quad for \quad 1 \leqslant p \leqslant m-1.
    \end{equation}
    
    \item By Equation \ref{geq-k-p},   $-k_{p-1} \leqslant \sum_{j=1}^{p-1} i_{k_{j}} \leqslant 0$, and  by Equation \ref{equiv0}, $\sum_{j=1}^{m} i_{k_{j}}=0$.\\ 
    
    Therefore,
    
    $$0 \leqslant \sum_{j=p}^{m} i_{k_{j}}=\sum_{j=1}^{m} i_{k_{j}}- \sum_{j=1}^{p-1} i_{k_{j}}\leqslant k_{p-1}.$$
\end{itemize}

Hence, part 3 of the theorem holds.

\end{proof}

\begin{example}
Let $$\pi = \tau_{9}^{-8} \cdot \tau_{10} \cdot \tau_{11}^{-3} \cdot \tau_{13}^{10} = (\tau_{9}^{-8} \cdot \tau_{10}^{8}) \cdot (\tau_{10}^{-7} \cdot \tau_{11}^{7}) \cdot (\tau_{11}^{-10} \cdot \tau_{13}^{10})$$ 

The permutation presentation of $\pi$

$$
[9; ~-1; ~-2; ~-3; ~-4; ~-5; ~-6; ~-7; ~-8; ~10; ~11; ~12; ~13]
$$ $$\cdot 
[-10; ~1; ~2; ~3; ~4; ~5; ~6; ~7; ~8; ~ 9; ~11; ~12; ~13]
 $$ $$\cdot 
[4; ~5; ~6; ~7; ~8; ~9; ~10; ~11; ~-1; ~-2; ~-3; ~12; ~13]
$$ $$\cdot
[-4; ~-5; ~-6; ~-7; ~-8; ~-9; ~-10; ~-11; ~-12; ~-13; ~1; ~2; ~3]
$$
$$= 
[1; ~ 5; ~7; ~8; ~9; ~10; ~11; ~12; ~13; ~4; ~6; ~2; ~3].
$$

The following holds:
\begin{itemize}
    \item $k_1=9, \quad k_2=10, \quad k_3=11, \quad k_4=13$;
    \item $i_{k_{1}} =-8, \quad i_{k_2}=1, \quad i_{k_3}=-3, \quad i_{k_4}=10;$
    \item $i_{k_4}=10>0, \quad i_{k_3}+i_{k_4}=7\geq 0,\quad i_{k_2}+i_{k_3}+i_{k_4}=8\geq 0;$ 
    \item $i_{k_1}+i_{k_2}+i_{k_3}+i_{k_4}=0$. 
\end{itemize}

\end{example}

\begin{lemma}\label{tau-t}
Let $\pi$ be an element of $\dot{S}_n$. By Theorem \ref{main-ogs-sn}, the generalized standard $OGS$ presentation of $\pi$ as an element of $B_n$ has the following form $$\pi=\tau_{k_{1}}^{i_{k_{1}}} \cdot \tau_{k_{2}}^{i_{k_{2}}} \cdots \tau_{k_{m}}^{i_{k_{m}}}$$
such that
\begin{itemize}
    \item $ -k_{j} \leqslant i_{k_{j}} \leqslant k_{j}-1$, for every $1 \leqslant j \leqslant m$.
    \item $\sum_{j=1}^{m} i_{k_{j}}=0$.
    \item  $0\leqslant \sum_{j=r}^{m}i_{k_{j}}\leqslant  k_{r-1}$, for every $2\leq r\leq m$.
\end{itemize}
Then, by considering $\pi$ as an element of $S_n$, the standard $OGS$ presentation of $\pi$ as it is presented in Theorem \ref{canonical-sn},  has the following form: $$t_{k_{1}}^{i_{k_{1}}^{\prime}} \cdot t_{k_{2}}^{i_{k_{2}}^{\prime}} \cdots t_{k_{m}}^{i_{k_{m}}^{\prime}}$$where, 
\begin{itemize}
    \item  $i_{k_{j}}^{\prime}=i_{k_{j}} \quad \text{if} \quad 0\leqslant i_{k_{j}} <k_{j}.$
    \item $i_{k_{j}}^{\prime} = k_{j}+i_{k_{j}} \quad \text{if} \quad -k_{j}\leqslant i_{k_{j}} < 0.$
\end{itemize}
\end{lemma}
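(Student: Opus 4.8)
The plan is to exploit the fact that both $\tau_k$ and $t_k$ are built from the same data, differing only by the factor $s_0$, i.e.\ $\tau_k = s_0\cdot t_k$ (Remark~2.1.3), together with the concrete permutation formulas for the powers of $\tau_k$ (Remark~\ref{tau-power}) and the analogous formulas for powers of $t_k$. The key observation is that an element $\pi\in\dot S_n$ has $\pi(j)>0$ for every $1\le j\le n$ (indeed $[|\pi(1)|;\ldots;|\pi(n)|]$ equals $\pi$ when $\pi$ uses no $s_0$), so I want to show that the product $t_{k_1}^{i'_{k_1}}\cdots t_{k_m}^{i'_{k_m}}$ with the prescribed exponents $i'_{k_j}$ yields exactly the same signed permutation as $\tau_{k_1}^{i_{k_1}}\cdots\tau_{k_m}^{i_{k_m}}$, and hence (being an honest unsigned permutation) coincides with $\pi$ as an element of $S_n$; uniqueness of the standard $OGS$ canonical form of $S_n$ from Theorem~\ref{canonical-sn} then finishes the identification.

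First I would record the pointwise effect of a single factor: for $0\le i_k<k$ one has $\tau_k^{i_k}=s_0\cdot t_k^{i_k}$ acting as $t_k^{i_k}$ followed by the sign flip on the coordinate whose image is $1$; comparing with Remark~\ref{tau-power} this is precisely the statement that $\tau_k^{i_k}$ and $t_k^{i_k}$ agree on $\{i_k+1,\ldots,n\}$ and differ by a sign on $\{1,\ldots,i_k\}$, while for $-k\le i_k<0$ we have $\tau_k^{i_k}=\tau_k^{k+i_k}\cdot\tau_k^{-k}$, where $\tau_k^{-k}$ flips the sign of every coordinate in $\{1,\ldots,k\}$; since $t_k^{k}=1$ in $S_n$, the natural ``$S_n$-shadow'' of $\tau_k^{i_k}$ is $t_k^{k+i_k}=t_k^{i'_{k_j}}$. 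So the substitution rule $i'_{k_j}=i_{k_j}$ for $i_{k_j}\ge0$ and $i'_{k_j}=k_j+i_{k_j}$ for $i_{k_j}<0$ is exactly ``forget the signs.''

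Next I would make this precise for the whole product by tracking signs through the composition. Write $\pi=\tau_{k_1}^{i_{k_1}}\cdots\tau_{k_m}^{i_{k_m}}$ and $\sigma=t_{k_1}^{i'_{k_1}}\cdots t_{k_m}^{i'_{k_m}}$. I claim $|\pi(j)|=\sigma(j)$ for all $j$; since $\pi\in\dot S_n$ forces $\pi(j)=|\pi(j)|>0$, this gives $\pi=\sigma$ as permutations. The claim follows by a right-to-left induction on the factors using Remark~\ref{tau-power} and the corresponding $t_k$-formulas: at each stage the partially-formed $\tau$-product differs from the partially-formed $t$-product only by a set of sign flips, and multiplying by the next $\tau_{k_j}^{i_{k_j}}=s_0\cdot t_{k_j}^{i_{k_j}}$ on the left changes both the underlying permutation (identically in the two computations, because $s_0$ only toggles a sign) and the sign pattern. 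The hypotheses $\sum_{j=r}^m i_{k_j}\ge0$ and $\sum_{j=1}^m i_{k_j}=0$ from Theorem~\ref{main-ogs-sn} are exactly what guarantee the accumulated sign flips all cancel out in the end, i.e.\ $\pi(j)>0$ for every $j$ — which we already know independently since $\pi\in\dot S_n$, so these conditions are really consistency checks rather than needed inputs here.

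The main obstacle I anticipate is the bookkeeping of which coordinates get sign-flipped when composing several powers $\tau_{k_j}^{i_{k_j}}$ with mixed-sign exponents: the ``exceptional'' coordinate sent to $\pm1$ by $s_0\cdot t_{k_j}^{i_{k_j}}$ moves around as $j$ varies, and one must verify carefully that under the substitution $i_{k_j}\mapsto i'_{k_j}$ the permutation part is genuinely unchanged (not merely up to sign). I would handle this by reducing to the single-block identity $\tau_{k_1}^{-i}\cdot\tau_{k_2}^{i}=t_{k_1}^{k_1-i}\cdot t_{k_2}^{i}$, which is just the $S_n$-shadow of Lemma~\ref{tau-i+i} (both sides equal $\prod_{j_1=k_1}^{k_2-1}\prod_{j_2=0}^{j_1+i-1}s_{j_1-j_2}$, an element of $\dot S_n=S_n$ with no $s_0$), and then telescoping the factorization of $\pi$ into such blocks exactly as in the proof of Theorem~\ref{main-ogs-sn}. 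Once the block identity is in hand, Theorem~\ref{main-ogs-sn}'s decomposition $\pi=\tau_{r_1}^{-(v_1+1)}\tau_{r_{q_1}+1}^{v_1+1}\cdots$ translates term-by-term into the standard $OGS$ form of $\pi$ in $S_n$, and matching exponents of $t_{k_j}$ on both sides gives precisely the stated rule, with uniqueness of the $S_n$ canonical form (Theorem~\ref{canonical-sn}) ensuring there is nothing further to check.
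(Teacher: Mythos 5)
Your final plan --- reduce to the block identity $\tau_{k_1}^{-i}\cdot\tau_{k_2}^{i}=t_{k_1}^{k_1-i}\cdot t_{k_2}^{i}$ obtained from Lemma \ref{tau-i+i}, telescope $\pi$ into such blocks, and finish with the uniqueness of the standard $OGS$ canonical form of $S_n$ --- is essentially the paper's own proof, which writes $\pi=(\tau_{k_1}^{-\rho_2}\tau_{k_2}^{\rho_2})(\tau_{k_2}^{-\rho_3}\tau_{k_3}^{\rho_3})\cdots(\tau_{k_{m-1}}^{-\rho_m}\tau_{k_m}^{\rho_m})$ with $\rho_r=\sum_{j=r}^m i_{k_j}$, applies the block identity, and re-telescopes using $t_{k}^{k}=1$. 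One small correction: the hypotheses $\sum_{j=1}^m i_{k_j}=0$ and $0\le\sum_{j=r}^m i_{k_j}\le k_{r-1}$ are not mere consistency checks in this route --- they are exactly what make each block exponent $\rho_r$ admissible for Lemma \ref{tau-i+i} and force $0\le k_1+i_{k_1}<k_1$, so the resulting exponents are legal for Theorem \ref{canonical-sn}.
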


\begin{proof}
Consider the generalized standard $OGS$ presentation of the following element $\pi\in \dot{S}_n$, as it is described in Theorem \ref{main-ogs-sn}:

$$\pi=\tau_{k_{1}}^{i_{k_{1}}} \cdot \tau_{k_{2}}^{i_{k_{2}}} \cdots \tau_{k_{m}}^{i_{k_{m}}}$$
such that
\begin{itemize}
    \item $ -k_{j} \leqslant i_{k_{j}} \leqslant k_{j}-1$, for every $1 \leqslant j \leqslant m$.
    \item $\sum_{j=1}^{m} i_{k_{j}}=0$.
    \item  $0\leqslant \sum_{j=r}^{m}i_{k_{j}}\leqslant  k_{r-1}$, for every $2\leq r\leq m$.
\end{itemize}
Then
$$\pi= (\tau_{k_{1}}^{-(\sum_{j=2}^{m} i_{k_j})} \cdot \tau_{k_{2}}^{(\sum_{j=2}^{m} i_{k_j})}) \cdots  (\tau_{k_{m-2}}^{-(i_{k_{m-1}}+i_{k_{m}})}\cdot \tau_{k_{m-1}}^{(i_{k_{m-1}}+i_{k_{m}})})\cdot (\tau_{k_{m-1}}^{-i_{k_{m}}}\cdot \tau_{k_{m}}^{i_{k_{m}}})$$ 
Then , we conclude the following identity, where in left hand side of Equation \ref{tau_t} using Lemma \ref{tau-i+i} and in right hand side of Equation \ref{tau_t} using the standard $OGS$ presentation of $\pi$ as it is described in Theorem \ref{canonical-sn} by considering $\pi$ as an element of $S_n$.
\begin{equation}\label{tau_t}
\tau_{k_{1}}^{-j} \cdot \tau_{k_{2}}^{j} = t_{k_{1}}^{k_{1}-j} \cdot t_{k_{2}}^{j}
\end{equation}
Hence,
$$\pi=(t_{k_{1}}^{k_{1}-(\sum_{j=2}^{m} i_{k_j})} \cdot t_{k_{2}}^{\sum_{j=2}^{m} i_{k_j}})\cdots (t_{k_{m-2}}^{k_{m-2}-(\sum_{j=m-1}^{m} i_{k_j})}\cdot t_{k_{m-1}}^{\sum_{j=m-1}^{m} i_{k_j}})\cdot (t_{k_{m-1}}^{k_{m-1}-i_{k_{m}}}\cdot t_{k_{m}}^{i_{k_{m}}})=$$

$$=t_{k_{1}}^{k_{1}-(\sum_{j=2}^{m} i_{k_j})} \cdot t_{k_2}^{k_2+i_{k_2}}\cdots t_{k_m}^{k_m+i_{k_m}}$$

Since $t_{k_j}^{k_j}=1$ and $k_1\geq |i_{k_1}|=\sum_{j=2}^{m}i_{k_j}$ we get the following:
$$t_{k_{1}}^{k_{1}-(\sum_{j=2}^{m} i_{k_j})} \cdot t_{k_2}^{k_2+i_{k_2}}\cdots t_{k_m}^{k_m+i_{k_m}}=t_{k_1}^{i'_{k_1}}\cdots t_{k_m}^{i'_{k_m}}.$$ 
\end{proof}

Theorem \ref{standard-bn-sn} shows the characterization of the standard $OGS$ elementary elements of $S_n$ (defined in Definition \ref{elementary}) by the generalized standard $OGS$ of $\dot{S}_n$ according to Theorem \ref{main-ogs-sn}. 

\begin{theorem}\label{standard-bn-sn}
Let $\pi$ be  an element of $\dot{S}_{n}$ in $B_{n}$. Then, by considering $\pi$ as an element of $S_n$, $\pi$ is a standard $OGS$ elementary element as it is defined in Definition \ref{elementary} if and only if the generalized standard $OGS$ presentation of $\pi$ as it is described in Theorem \ref{ogs-bn}, has the following form: $$\pi=\tau_{k_{1}}^{i_{k_{1}}} \cdot \tau_{k_{2}}^{i_{k_{2}}} \cdots \tau_{k_{m}}^{i_{k_{m}}}, $$where, $$-k_1\leqslant i_{k_{1}} < 0 \quad , 0<i_{k_{j}} <k_j \quad \text{for} \quad j\geq 2 ,\quad \sum_{j=1}^{m} i_{k_{j}} =0.$$ 

\end{theorem}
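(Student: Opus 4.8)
My plan is to reduce the statement to Definition \ref{elementary} by passing through Lemma \ref{tau-t}. Write the generalized standard $OGS$ presentation of $\pi$ as guaranteed by Theorem \ref{main-ogs-sn}, namely $\pi=\tau_{k_1}^{i_{k_1}}\cdots\tau_{k_m}^{i_{k_m}}$ with $i_{k_1}<0$, $\sum_{j=1}^{m}i_{k_j}=0$, and $0\le\sum_{j=r}^{m}i_{k_j}\le k_{r-1}$ for $2\le r\le m$; put $N=\{\,j:i_{k_j}<0\,\}$, so that $1\in N$, and note $m\ge 2$ since a vanishing total with $i_{k_1}<0$ forces a positive exponent. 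By Lemma \ref{tau-t} the $S_n$ standard $OGS$ canonical form of $\pi$ is $\prod_{j=1}^{m}t_{k_j}^{i_{k_j}^{\prime}}$ with $i_{k_j}^{\prime}=i_{k_j}$ for $j\notin N$ and $i_{k_j}^{\prime}=k_j+i_{k_j}$ for $j\in N$, where a factor disappears exactly when $i_{k_j}^{\prime}=0$, which can only occur if $i_{k_j}=-k_j$. Using $\sum_{j=1}^{m}i_{k_j}=0$, the sum of the exponents of this canonical form is $S:=\sum_{j=1}^{m}i_{k_j}^{\prime}=\sum_{j\in N}k_j$, while the smallest generator index that actually occurs is $K:=\min\{\,k_j:i_{k_j}^{\prime}\ne 0\,\}$. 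By Definition \ref{elementary}, $\pi$ is a standard $OGS$ elementary element of $S_n$ if and only if $S\le K$, so the whole statement reduces to proving that $S\le K$ holds if and only if $N=\{1\}$.

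If $N=\{1\}$, then $S=k_1$; moreover $K=k_1$ when $i_{k_1}>-k_1$ and $K=k_2>k_1$ when $i_{k_1}=-k_1$ (since then every $i_{k_j}^{\prime}=i_{k_j}>0$ for $j\ge 2$), so in either case $S=k_1\le K$ and $\pi$ is standard $OGS$ elementary. Conversely, assume $\pi$ is standard $OGS$ elementary, i.e. $S\le K$, and suppose for contradiction that $N\ne\{1\}$; let $j_0=\min(N\setminus\{1\})\ge 2$, so $S\ge k_1+k_{j_0}$. If $j_0\ge 3$, then every index $j$ with $2\le j<j_0$ lies outside $N$, hence $i_{k_2}^{\prime}=i_{k_2}>0$ and $K\le k_2<k_{j_0}<S$, contradicting $S\le K$. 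If $j_0=2$, first note $m\ge 3$ (when $m=2$ the relation $\sum_{j=1}^{2}i_{k_j}=0$ forces $i_{k_2}=-i_{k_1}>0$, i.e. $2\notin N$), and then $i_{k_2}\ne -k_2$, since otherwise the inequalities $0\le\sum_{j=r}^{m}i_{k_j}\le k_{r-1}$ of Theorem \ref{main-ogs-sn} applied with $r=2$ and $r=3$ give $0\le\sum_{j=2}^{m}i_{k_j}=\sum_{j=3}^{m}i_{k_j}-k_2\le 0$, whence $\sum_{j=2}^{m}i_{k_j}=0$ and so $i_{k_1}=0$, contradicting $i_{k_1}<0$. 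Hence $i_{k_2}^{\prime}\ne 0$, so $K\le k_2<k_1+k_2\le S$, again contradicting $S\le K$. Therefore $N=\{1\}$, which is exactly the asserted form of the presentation.

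I expect the only delicate point to be the bookkeeping around the values $i_{k_j}=-k_j$: such an exponent collapses the corresponding $t$-factor to the identity and shifts the minimal surviving index $K$, and pinning down $K$ in that situation is precisely where the inequalities of Theorem \ref{main-ogs-sn} are needed. Everything else is a direct substitution via Lemma \ref{tau-t} together with the identity $S=\sum_{j\in N}k_j$.
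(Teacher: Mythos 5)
Your argument is correct, and its two halves relate to the paper differently. The direction ``stated form $\Rightarrow$ elementary'' is essentially the paper's: both pass through Lemma \ref{tau-t}, observe that the exponent sum of the resulting $S_n$ canonical form equals $k_1$, and split into the cases $-k_1<i_{k_1}<0$ and $i_{k_1}=-k_1$. The converse is where you genuinely diverge. The paper argues constructively: starting from the standard $OGS$ canonical form of an elementary element of $S_n$, it invokes Theorem 28 of \cite{S1} for the normal form, converts back to the $\tau$-generators via Lemma \ref{tau-i+i}, and reads off that the resulting generalized presentation has exactly one negative exponent (case split $maj(\pi)<k_1$ versus $maj(\pi)=k_1$). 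You instead stay entirely on the $\tau$-side and argue by contradiction: with $S=\sum_{j\in N}k_j$ the exponent sum and $K$ the minimal surviving index of the $S_n$ form given by Lemma \ref{tau-t}, elementarity is exactly $S\leq K$, and a second negative exponent forces $S>K$, with the only delicate subcase ($j_0=2$, $i_{k_2}=-k_2$) excluded by the partial-sum inequalities of Theorem \ref{main-ogs-sn}. Your route avoids the external result from \cite{S1} and the normal-form computation, at the price of the case analysis on where a second negative exponent could sit and of the (standard, but worth stating) conventions you lean on: that the generalized presentation lists only nonzero exponents, that its uniqueness (Theorem \ref{ogs-bn}) lets you identify ``the presentation has the stated form'' with $N=\{1\}$, and that $\pi\neq 1$ so that $K$ is well defined; the paper's constructive converse has the mild advantage of explicitly exhibiting the $\tau$-presentation rather than only certifying its shape.
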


\begin{proof}
Consider:
$$\pi=\tau_{k_{1}}^{i_{k_{1}}} \cdot \tau_{k_{2}}^{i_{k_{2}}}\cdots \tau_{k_{m}}^{i_{k_{m}}} $$
and assume $$-k_1\leqslant i_{k_{1}} < 0 \quad ,0< i_{k_{j}} <k_j \quad \text{for} \quad j\geq 2 ,\quad \sum_{j=1}^{m} i_{k_{j}} =0.$$ 
First, we prove that $\pi\in \dot{S}_n$ and then by considering $\pi$ as an element of $S_n$, we prove that $\pi$ is a standard $OGS$ elementary element (as it is defined in Definition \ref{elementary}). \\

Consider $\sum_{j=r}^{m} i_{k_{j}}$, for $2\leq r\leq m$.
Since $i_{k_r}>0$ for every $r\geq 2$, we have
$\sum_{j=r}^{m} i_{k_{j}}\leq\sum_{j=2}^{m} i_{k_{j}}$.
Since $\sum_{j=1}^{m} i_{k_{j}}=0$, we have $\sum_{j=2}^{m} i_{k_{j}}=-i_{k_1}$
Therefore, $-k_1\leq i_{k_1}<0$ implies that  $0<\sum_{j=2}^{m} i_{k_{j}}\leq k_1$.
Hence, for every $2\leq r\leq m$:

$$0<\sum_{j=r}^{m} i_{k_{j}}\leq \sum_{j=2}^{m} i_{k_{j}}\leq k_1<k_{r}.$$
Hence, by Theorem \ref{main-ogs-sn}, $\pi\in \dot{S}_n$.
Now consider the presentation of $\pi$ as an element of $S_n$ according to the standard $OGS$  presentation by using Lemma \ref{tau-t}. 

$$\pi=t_{k_1}^{i'_{k_1}}\cdot t_{k_2}^{i'_{k_2}}\cdots t_{k_m}^{i'_{k_m}}$$
such that  the following holds:
\begin{itemize}
    \item $i'_{k_r}=i_{k_r}$, since $i_{k_r}>0$ for $2\leq r\leq m$.
    \item $i'_{k_1}=k_1+i_{k_1}$, since $i_{k_1}<0$.
    \item $\sum_{j=1}^m i'_{k_j}=k_1+\sum_{j=1}^m i_{k_j}=k_1$, since $\sum_{j=1}^m i_{k_j}=0$.  
    \end{itemize}
    Hence, concerning the value of $i_{k_1}$, $\pi$ has one of the two following presentation as an element of $S_n$:
    \begin{itemize}
    \item In the case $-k_1<i_{k_1}<0$,we have $0<i'_{k_1}=k_1+i_{k_1}<k_1$, and then  the standard $OGS$ presentation (as it is presented in Theorem \ref{canonical-sn}) of $\pi$ as an element of $S_n$  as follow:
    $$\pi=t_{k_1}^{i'_{k_1}}\cdot t_{k_2}^{i'_{k_2}}\cdots t_{k_m}^{i'_{k_m}}, $$ such that $1\leq i'_{k_j}<k_j$ for every $1\leq j\leq m$, and $\sum_{j=1}^m i'_{k_j}=k_1$.
    Hence, by Definition \ref{elementary}, the element $t_{k_1}^{k_1+i_{k_1}}\cdot t_{k_2}^{i_{k_2}}\cdots t_{k_m}^{i_{k_m}}$ 
    is a standard $OGS$ elementary element;
    \item In the case $i_{k_1}=-k_1$, we have $i'_{k_1}=k_1+i_{k_1}=0$, and then the standard $OGS$ presentation (as it is presented in Theorem \ref{canonical-sn}) of $\pi$ as an element of $S_n$  as follow:  $$\pi=t_{k_1}^{k_1+i_{k_1}}\cdot t_{k_2}^{i_{k_2}}\cdots t_{k_m}^{i_{k_m}}=t_{k_2}^{i_{k_2}}\cdots t_{k_m}^{i_{k_m}},$$ such that $\sum_{j=2}^m i_{k_j}=k_1+i_{k_1}+\sum_{j=2}^m i_{k_j}=k_1<k_2$. Hence, by Definition \ref{elementary} the element $$\pi=t_{k_2}^{i_{k_2}}\cdots t_{k_m}^{i_{k_m}}$$  is a standard $OGS$ elementary element.
     \end{itemize}

Now, we turn to the proof second direction of the theorem.\\

Assume $\pi\in \dot{S}_n$, and by considering $\pi$ as an element of $S_n$, $\pi$ is a standard $OGS$ elementary element.
Then, by Definition \ref{elementary},

$$\pi=t_{k_1}^{i_{k_1}}\cdot t_{k_2}^{i_{k_2}}\cdots t_{k_m}^{i_{k_m}},$$
where, $1\leq i_{k_j}<k_j$ and  $\sum_{j=1}^m i_{k_j}\leq k_1$.Then, by \cite{S1} Theorem 28 

$$norm(\pi)=\prod_{u=\rho_1}^{k_1-1}\prod_{r=0}^{\rho_1-1}s_{u-r}\cdot \prod_{u=k_1}^{k_2-1}\prod_{r=0}^{\rho_2-1}s_{u-r}\cdot \prod_{u=k_2}^{k_3-1}\prod_{r=0}^{\rho_3-1}s_{u-r}\cdots \prod_{u=k_{m-1}}^{k_m-1}\prod_{r=0}^{\rho_m-1}s_{u-r},$$
where, $\rho_{j}=\sum_{x=j}^{m}i_{k_{x}}$ for $1\leq j\leq m$.

Now, consider $\pi$ presented in terms of generators $\tau_{k_j}$ (as it defined in Definition \ref{tau}), for $1\leq j\leq m$.
By Lemma \ref{tau-i+i}, 
$$\pi=(\tau_{maj(\pi)}^{-\rho_1}\cdot \tau_{k_1}^{\rho_1})\cdot (\tau_{k_1}^{-\rho_2}\cdot \tau_{k_2}^{\rho_2})\cdot (\tau_{k_2}^{-\rho_3}\cdot \tau_{k_3}^{\rho_3})\cdots (\tau_{k_{m-1}}^{-\rho_m}\cdot \tau_{k_m}^{\rho_m})$$
$$=\tau_{maj(\pi)}^{-\rho_1}\cdot \tau_{k_1}^{\rho_1-\rho_2}\cdot\tau_{k_2}^{\rho_2-\rho_3}\cdots \tau_{k_{m-1}}^{\rho_{m-1}-\rho_m}\cdot \tau_{k_m}^{\rho_m}.$$

Since $\rho_j=\sum_{x=j}^{m}i_{k_x}$, the following holds:
\begin{itemize}
    \item $\rho_1=maj(\pi)$;
    \item $\rho_j-\rho_{j+1}=i_{k_j}$ for $1\leq j\leq m-1$;
    \item $\rho_m=i_{k_m}$.
\end{itemize}

Now, notice that $\pi$ is a standard $OGS$ elementary element, therefore by Definition \ref{elementary}, $maj(\pi)\leq k_1$, i.e., either $maj(\pi)<k_1$ or $maj(\pi)=k_1$.

Hence, in case $maj(\pi)<k_1$, the following holds:

$$\pi= \tau_{maj(\pi)}^{-\rho_1}\cdot \tau_{k_1}^{\rho_1-\rho_2}\cdot\tau_{k_2}^{\rho_2-\rho_3}\cdots \tau_{k_{m-1}}^{\rho_{m-1}-\rho_m}\cdot \tau_{k_m}^{\rho_m}$$
$$=\tau_{maj(\pi)}^{-maj(\pi)}\cdot \tau_{k_1}^{i_{k_1}}\cdot\tau_{k_2}^{i_{k_2}}\cdots \tau_{k_{m-1}}^{i_{k_{m-1}}}\cdot \tau_{k_m}^{i_{k_m}}.$$ 
such that $0<i_{k_j}<k_j$ for $1\leq j\leq m$, and $-maj(\pi)<0$. Hence, the Theorem holds in case $maj(\pi)<k_1$.

Now, assume $maj(\pi)=k_1$, then
$$\rho_2=\sum_{j=2}^m i_{k_j}=\sum_{j=1}^m i_{k_j}-i_{k_1}=maj(\pi)-i_{k_1}=k_1-i_{k_1},$$
and then,
$$\tau_{maj(\pi)}^{-\rho_1}\cdot \tau_{k_1}^{\rho_1-\rho_2}=\tau_{k_1}^{-\rho_2}=\tau_{k_1}^{i_{k_1}-k_1}$$
Hence, the following holds:
$$\pi= \tau_{maj(\pi)}^{-\rho_1}\cdot \tau_{k_1}^{\rho_1-\rho_2}\cdot\tau_{k_2}^{\rho_2-\rho_3}\cdots \tau_{k_{m-1}}^{\rho_{m-1}-\rho_m}\cdot \tau_{k_m}^{\rho_m}$$
$$=\tau_{k_1}^{i_{k_1}-k_1}\cdot \tau_{k_2}^{i_{k_2}}\cdots \tau_{k_{m-1}}^{i_{k_{m-1}}}\cdot \tau_{k_m}^{i_{k_m}}.$$ 
such that $0<i_{k_j}<k_j$ for $2\leq j\leq m$, and $i_{k_1}-k_1<0$. Hence, the Theorem holds in case $maj(\pi)=k_1$ as well. 

\end{proof}

\begin{example}
Let $$\pi = \tau_{6}^{-5} \cdot \tau_{8}^{2} \cdot \tau_{9}^{2} \cdot \tau_{10}$$

Notice, $$i_{k_1}=-5<0, \quad i_{k_2}=2>0, \quad i_{k_3}=2>0,\quad i_{k_4}=1>0,$$
$$i_{k_1}+i_{k_2}+i_{k_3}+i_{k_4}=-5+2+2+1=0.$$

Hence, the generalized standard $OGS$ presentation of $\pi$ satisfies the conditions of Theorem \ref{standard-bn-sn}.
By Lemma \ref{tau-t}, the presentation of $\pi$ in the standard $OGS$ presentation,  by considering $\pi$ as an element of $S_n$ as follow:
$$\pi=t_{6}^{6-5}\cdot t_8^2\cdot t_9^2\cdot t_{10}=t_6\cdot t_8^2\cdot t_9^2\cdot t_{10}. $$
Notice, $maj(\pi)=1+2+2+1=6=k_1$. Hence, by Defintion \ref{elementary}, $\pi$ is a standard $OGS$ elementary element by considering $\pi$ as an elemnt of $S_n$.\\

Now, consider the permutation presentation of $\pi$.

$$\pi=
[6; ~-1; ~-2; ~-3; ~-4; ~-5; ~7; ~8; ~9; ~10]\cdot 
[-7; ~-8; ~1; ~2; ~3; ~4; ~5; ~6; ~9; ~10] 
$$ $$\cdot
[-8; ~-9; ~1; ~2; ~3; ~4; ~5; ~6; ~7; ~10] 
 \cdot
[-10; ~1; ~2; ~3; ~4; ~5; ~6; ~7; ~8; ~9]
$$
$$= 
[1; ~4; ~5; ~7; ~8; ~10; ~2; ~3; ~6; ~9]. 
$$

Notice, that the following holds:

$$\pi(1)<\pi(2)<\pi(3)<\pi(4)<\pi(5)<\pi(6), \quad \pi(6)>\pi(7),$$ $$\pi(7)<\pi(8)<\pi(9)<\pi(10).$$

Therefore, $des(\pi)=\{6\}$. By \cite{S1} Theorem 28, $\pi\in S_n$ is a standard $OGS$ elementary element if and only if $des(\pi)$ contains a single element. Hence, $\pi$ is a standard $OGS$ elementary element by considering $\pi$ as an element of $S_n$. 
\end{example}

Theorem \ref{factorization-} shows a connection between the generalized standard $OGS$ of an element $\pi\in \dot{S}_n$ as it is presented in Theorem \ref{main-ogs-sn}, and the standard $OGS$ factorization of $\pi$ as it is defined in Definition \ref{canonical-factorization-def} by considering $\pi$ as an element of $S_n$.

\begin{theorem}\label{factorization-}
 Let $\pi$ be an element of $\dot{S}_n$, such that the presentation of $\pi$ by the generalized standard $OGS$ , as it is presented in Theorem \ref{main-ogs-sn}, as follow:  $$\pi=\tau_{k_{1}}^{i_{k_{1}}} \cdot \tau_{k_{2}}^{i_{k_{2}}} \cdots \tau_{k_{m}}^{i_{k_{m}}}$$
 such that
\begin{itemize}
    \item $ -k_{j} \leqslant i_{k_{j}} \leqslant k_{j}-1$, for every $1 \leqslant j \leqslant m$.
    \item $\sum_{j=1}^{m} i_{k_{j}}=0$.
    \item  $0\leqslant \sum_{j=r}^{m}i_{k_{j}}\leqslant  k_{r-1}$, for every $2\leq r\leq m$.
\end{itemize}
 Then, consider the standard $OGS$ elementary factorization of $\pi$ as it is defined in Definition \ref{canonical-factorization-def}, where we consider $\pi$ as an element in $S_n$:  $$\pi= \prod_{v=1}^{z(\pi)} \pi^{(v)}$$  
 The value of $z(\pi)$ satisfies the following property:
 $$z(\pi) =| \{1 \leqslant j \leqslant n \quad |\quad  i_{k_{j}} < 0.\}|$$
 
\end{theorem}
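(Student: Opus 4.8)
The plan is to pass through the standard $OGS$ elementary factorization of $\pi$, viewed as an element of $S_n$. Write $\pi=\prod_{v=1}^{z(\pi)}\pi^{(v)}$ for this factorization (Definition \ref{canonical-factorization-def}), with $\pi^{(v)}=\prod_{j=1}^{m^{(v)}}t_{h^{(v)}_j}^{\imath^{(v)}_j}$ in standard $OGS$ form, $\imath^{(v)}_j>0$, and with the chain conditions $maj(\pi^{(1)})\le h^{(1)}_1$ and $h^{(v-1)}_{m^{(v-1)}}\le maj(\pi^{(v)})\le h^{(v)}_1$. Each $\pi^{(v)}$ is a standard $OGS$ elementary element, so via $\dot{S}_n\cong S_n$ I would regard it inside $\dot{S}_n$, read off its generalized standard $OGS$ from (the proof of) Theorem \ref{standard-bn-sn}, multiply these presentations together, reduce the product to generalized standard $OGS$ form, and count the negative exponents that survive.

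First I would record, for a single elementary factor, that with $c^{(v)}:=maj(\pi^{(v)})=\sum_{j}\imath^{(v)}_j$, the proof of Theorem \ref{standard-bn-sn} gives either
$$\pi^{(v)}=\tau_{c^{(v)}}^{-c^{(v)}}\cdot\tau_{h^{(v)}_1}^{\imath^{(v)}_1}\cdots\tau_{h^{(v)}_{m^{(v)}}}^{\imath^{(v)}_{m^{(v)}}}\qquad(c^{(v)}<h^{(v)}_1),$$
or
$$\pi^{(v)}=\tau_{h^{(v)}_1}^{\imath^{(v)}_1-h^{(v)}_1}\cdot\tau_{h^{(v)}_2}^{\imath^{(v)}_2}\cdots\tau_{h^{(v)}_{m^{(v)}}}^{\imath^{(v)}_{m^{(v)}}}\qquad(c^{(v)}=h^{(v)}_1).$$
In either case this is a product of at least two $\tau$-powers (a one-term generalized $OGS$ of an element of $\dot{S}_n$ would have exponent $0$ by Theorem \ref{main-ogs-sn}, hence equal $1$, whereas $\pi^{(v)}\ne1$); its first exponent is strictly negative while every later exponent is strictly positive; its smallest generator index is $maj(\pi^{(v)})$; and its largest generator index is $h^{(v)}_{m^{(v)}}$.

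Then I would glue. Multiplying the presentations in the order $v=1,\dots,z(\pi)$, the chain condition $h^{(v-1)}_{m^{(v-1)}}\le maj(\pi^{(v)})$ forces, between consecutive factors, either a strict increase of all generator indices (nothing to simplify) or a single coincidence $h^{(v-1)}_{m^{(v-1)}}=maj(\pi^{(v)})$; in the latter case the last $\tau$-power of $\pi^{(v-1)}$, with positive exponent $\imath^{(v-1)}_{m^{(v-1)}}$, absorbs the first $\tau$-power of $\pi^{(v)}$, with negative exponent. The merged exponent equals $\imath^{(v-1)}_{m^{(v-1)}}-h^{(v-1)}_{m^{(v-1)}}$ when $c^{(v)}<h^{(v)}_1$, and $\imath^{(v-1)}_{m^{(v-1)}}+\imath^{(v)}_1-h^{(v)}_1$ when $c^{(v)}=h^{(v)}_1$; using $0<\imath^{(v)}_j<h^{(v)}_j$, and in the second case the identity from Theorem \ref{theorem-factorization} that $\imath^{(v-1)}_{m^{(v-1)}}+\imath^{(v)}_1$ is a standard $OGS$ exponent of $\pi$ at that generator (hence $<h^{(v)}_1$), one checks the merged exponent lies in $(-k,0)$ with $k$ the common index, so it is again strictly negative and admissible. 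No generator index is shared by three consecutive factors (a factor's largest index exceeds its $maj$), so no reduction cascades; thus the glued product already has strictly increasing indices and admissible exponents, and by the uniqueness in Theorem \ref{ogs-bn} it is the generalized standard $OGS$ of $\pi$. In this presentation each factor $\pi^{(v)}$ contributes exactly one negative exponent, namely its front term, either standing alone or entering the absorbing merge with $\pi^{(v-1)}$, and all remaining terms are positive; hence the number of indices $j$ with $i_{k_j}<0$ equals the number of factors, i.e. $z(\pi)$.

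The hard part is the gluing step: keeping straight the two shapes of $\pi^{(v)}$, aligning the front index $c^{(v)}=maj(\pi^{(v)})$ against the possible coincidence $h^{(v-1)}_{m^{(v-1)}}=maj(\pi^{(v)})$, and confirming that every boundary merge leaves an exponent that is still strictly negative and still lies in the admissible range $(-k,k)$. This is precisely where both chain inequalities of Definition \ref{canonical-factorization-def}, the bound $\imath^{(v)}_j<h^{(v)}_j$ on standard $OGS$ exponents, and Theorem \ref{theorem-factorization} in the tight case $h^{(v-1)}_{m^{(v-1)}}=h^{(v)}_1$ are all used.
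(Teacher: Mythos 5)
Your proposal is correct, but it runs in the opposite direction to the paper's argument. The paper starts from the generalized standard $OGS$ presentation $\tau_{k_1}^{i_{k_1}}\cdots\tau_{k_m}^{i_{k_m}}$, lets $r_1<\cdots<r_q$ be the positions with $i_{k_{r_\alpha}}<0$, defines recursive partial sums $i'_\alpha$, splits $\pi$ into the $q$ blocks beginning at those positions, shows via Theorem \ref{standard-bn-sn} and Lemma \ref{tau-t} that each block is a standard $OGS$ elementary element with $maj(\pi_\alpha)=k_{r_\alpha}$, and then verifies the interlacing conditions of Definition \ref{canonical-factorization-def}, so that the exhibited product is the elementary factorization and $z(\pi)=q$. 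You instead take the (unique) elementary factorization in $S_n$ as the starting point, convert each factor to its generalized $OGS$ form using the second direction of Theorem \ref{standard-bn-sn}, and glue: your two shapes of $\pi^{(v)}$ match the paper's $maj<k_1$ and $maj=k_1$ cases exactly, your boundary analysis correctly isolates the only possible coincidence $h^{(v-1)}_{m^{(v-1)}}=maj(\pi^{(v)})$, and your use of the uniqueness clause of Theorem \ref{theorem-factorization} in the tight case $h^{(v-1)}_{m^{(v-1)}}=h^{(v)}_1$ (to get $\imath^{(v-1)}_{m^{(v-1)}}+\imath^{(v)}_1<h^{(v)}_1$, hence a strictly negative merged exponent) is precisely the input needed; the non-cascading observation and the final appeal to uniqueness of the generalized $OGS$ (Theorem \ref{ogs-bn}) close the argument. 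What each route buys: the paper's construction works directly from the $B_n$-side exponent data and produces the factors explicitly in terms of the $\tau$-exponents (a description reused later, e.g. in Proposition \ref{des-bn-sn}), at the cost of the recursive $i'_\alpha$ bookkeeping; your route avoids that bookkeeping and gets the count of negative exponents almost immediately from uniqueness statements, at the cost of a careful case analysis at the factor boundaries, which you have handled correctly.
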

\begin{proof}
Let $\pi\in \dot{S}_n$ is presented by the generalized standard $OGS$ presentation as follow:
$$\pi=\tau_{k_{1}}^{i_{k_{1}}} \cdot \tau_{k_{2}}^{i_{k_{2}}}\cdots \tau_{k_{m}}^{i_{k_{m}}}.$$ 
Denote by $q$ the number of $j$ such that $1\leq j\leq m$ and $i_{k_j}<0$, and  by $r_1, r_2\ldots, r_q$,  all the indices of $k$, such that $r_1<r_2<r_3,...<r_q$, and  $i_{k_{r_j}}<0$ for $1\leq j\leq q$.  
Since, $\pi\in \dot{S}_n$, by Theorem \ref{main-ogs-sn}, $i_{k_1}<0$. Hence, $r_1=1$.

Now, for every $1\leq \alpha\leq q$,   let define $i^{\prime}_{\alpha}$
by the following recursive algorithm:
\begin{itemize}
    \item Let define $i^{\prime}_{q}$ to be $$i^{\prime}_{q}=i_{k_m};$$
    \item For $\alpha=q-1$, let define $i^{\prime}_{q-1}$ to be
    $$i^{\prime}_{q-1}=i_{r_q}+\sum_{j=r_{q}+1}^{m-1} i_{k_{j}}+i^{\prime}_{q};$$
    \item Then for every $1\leq \alpha\leq q-2$,  let define $i^{\prime}_{\alpha}$ to be
    $$i^{\prime}_{\alpha}=i_{r_{\alpha+1}}+\sum_{j=r_{\alpha+1}+1}^{r_{\alpha+2}-1} i_{k_{j}}+i^{\prime}_{\alpha+1}.$$
\end{itemize}

Then, consider the following presentation of $\pi$

$$\pi = \tau_{k_{1}}^{i_{k_{1}}} \cdot \tau_{k_{2}}^{i_{k_{2}}}\cdots \tau_{k_{m}}^{i_{k_{m}}}$$ $$ = (\tau_{k_{r_1}}^{i_{k_{r_1}}} \cdot \tau_{k_{2}}^{i_{k_{2}}}\cdots  \tau_{k_{r_{2}-1}}^ {i_{k_{r_{2}-1}}}\cdot \tau_{k_{r_{2}}}^{i^{\prime}_{1}}) \cdot (\tau_{k_{r_{2}}}^{-\sum_{j=r_{2}+1}^{r_3-1} i_{k_{j}}-i^{\prime}_{2}} \cdot \tau_{k_{r_{2}+1}}^{i_{k_{r_{2}+1}}}  \cdots  \tau_{k_{r_{3}-1}}^ {i_{k_{r_{3}-1}}}\cdot \tau_{k_{r_{3}}}^{i^{\prime}_{2}})\cdots $$ $$ \cdots (\tau_{k_{r_{q}}}^{-\sum_{j=r_{q}+1}^{m-1} i_{k_{j}}-i^{\prime}_{q}} \cdot \tau_{k_{r_{q}+1}}^{i_{k_{r_{q}+1}}}  \cdots \tau_{k_{m-1}}^{i_{k_{m-1}}}\cdot \tau_{k_{m}}^{i^{\prime}_{q}}).$$

Since $\pi\in \dot{S}_n$,  by Theorem \ref{main-ogs-sn}, 
$\sum_{j=1}^m i_{k_j}=0$. Therefore,
$$i_{k_1}=i_{k_{r_1}}=-\sum_{j=2}^{r_2-1} i_{k_{j}}-i^{\prime}_{1}.$$
Hence, $\pi$ can be presented as follow:

$$\pi = (\tau_{k_{r_1}}^{-\sum_{j=2}^{r_2-1} i_{k_{j}}-i^{\prime}_{1}} \cdot \tau_{k_{2}}^{i_{k_{2}}}\cdots  \tau_{k_{r_{2}-1}}^ {i_{k_{r_{2}-1}}}\cdot\tau_{k_{r_{2}}}^{i^{\prime}_{1}}) \cdot (\tau_{k_{r_{2}}}^{-\sum_{j=r_{2}+1}^{r_3-1} i_{k_{j}}-i^{\prime}_{2}} \cdot \tau_{k_{r_{2}+1}}^{i_{k_{r_{2}+1}}}  \cdots \tau_{k_{r_{3}-1}}^{i_{k_{r_{3}-1}}}\cdot \tau_{k_{r_{3}}}^{i^{\prime}_{2}})\cdots$$ $$\cdots (\tau_{k_{r_{q}}}^{-\sum_{j=r_{q}+1}^{m-1} i_{k_{j}}-i^{\prime}_{q}} \cdot \tau_{k_{r_{q}+1}}^{i_{k_{r_{q}+1}}}  \cdots \tau_{k_{m-1}}^{i_{k_{m-1}}} \tau_{k_{m}}^{i^{\prime}_{q}}).$$

Now, notice, the following properties holds:
\begin{itemize}
    \item $i_{k_j}>0$ for every $1\leq j\leq m$ such that $j\neq r_p$ for $1\leq p\leq q$.
    \item Notice, $i^{\prime}_{q}=i_{k_m}$. Hence, by Theorem \ref{main-ogs-sn}, $i^{\prime}_{q}>0$.
    \item Since $\sum_{j=r_{q}+1}^{m-1} i_{k_{j}}+i^{\prime}_{q}= \sum_{j=r_{q}+1}^{m} i_{k_{j}}$, by Theorem \ref{main-ogs-sn}, $\sum_{j=r_{q}+1}^{m-1} i_{k_{j}}+i^{\prime}_{q}\leq k_{r_q}$.
    Hence, $-k_{r_q}\leq -\sum_{j=r_{q}+1}^{m-1} i_{k_{j}}-i^{\prime}_{q}<0 $.
    \item Notice, $i^{\prime}_{q-1}=i_{r_q}+\sum_{j=r_{q}+1}^{m-1} i_{k_{j}}+i^{\prime}_{q}=\sum_{j=r_q}^m i_{k_j}$. By Theorem \ref{main-ogs-sn}, $\sum_{j=r_q}^{m} i_{k_j}\geq 0$. Hence, $i^{\prime}_{q-1}\geq 0$.
   \item Now, by considering $\sum_{j=r_{\alpha+1}+1}^{r_{\alpha+2}-1} i_{k_{j}}+i^{\prime}_{\alpha+1}$  and  $i^{\prime}_{\alpha}$  for $1\leq \alpha\leq q-2$, the following properties holds (which can be proved by the same argument as it  have been proved for $\alpha=q-1$):
    \begin{itemize}
        \item $-k_{r_{\alpha+1}}\leq  -\sum_{j=r_{\alpha+1}+1}^{r_{\alpha+2}-1} i_{k_{j}}-i^{\prime}_{\alpha+1}<0$;
        \item $i^{\prime}_{\alpha}\geq 0$.
    \end{itemize}
\end{itemize}

Hence, by Theorem \ref{standard-bn-sn}, every component $\pi_{\alpha}$ of $\pi$ (where $1\leq \alpha\leq q$) of the following form
$$\pi_{\alpha}=\tau_{k_{r_{\alpha}}}^{-\sum_{j=r_{\alpha}+1}^{r_{\alpha+1}-1} i_{k_{j}}-i^{\prime}_{\alpha}} \cdot \tau_{k_{r_{\alpha}+1}}^{i_{k_{r_{\alpha}+1}}}  \cdots  \tau_{k_{r_{\alpha+1}-1}}^ {i_{k_{r_{\alpha+1}-1}}}\cdot \tau_{k_{r_{\alpha+1}}}^{i^{\prime}_{\alpha}}$$
is a standard $OGS$ elementary element by considering $\pi_{\alpha}$ as an element of $S_n$.\\

Notice the following observations:
\begin{itemize}
    \item Since $i^{\prime}_{\alpha}\geq 0$ for $1\leq \alpha\leq q-1$, we have that either $i^{\prime}_{\alpha}>0$ or $i^{\prime}_{\alpha}=0$ for $1\leq \alpha\leq q-1$. ( $i^{\prime}_{q}=i_{k_m}>0$ for every $\pi\in \dot{S}_n$).
    \item Since $\sum_{j=r_{\alpha}+1}^{r_{\alpha+1}-1}i_{k_{j}}+i^{\prime}_{\alpha}\leq k_{r_{\alpha}}$, we have that either \\ $k_{r_{\alpha}}-\sum_{j=r_{\alpha}+1}^{r_{\alpha+1}-1}i_{k_{j}}-i^{\prime}_{\alpha}>0$ or $k_{r_{\alpha}}-\sum_{j=r_{\alpha}+1}^{r_{\alpha+1}-1}i_{k_{j}}-i^{\prime}_{\alpha}=0$ for $1\leq \alpha\leq q$.
\end{itemize}

Then, by Lemma \ref{tau-t}, the presentation of the component $\pi_{\alpha}$ of  $\pi$ by the standard $OGS$ of $S_n$ is one of the four following form:

\begin{itemize}
    \item $$\pi_{\alpha}=t_{k_{r_{\alpha}}}^{k_{r_{\alpha}}-\sum_{j=r_{\alpha}+1}^{r_{\alpha+1}-1}i_{k_{j}}-i^{\prime}_{\alpha}} \cdot t_{k_{r_{\alpha}+1}}^{i_{k_{r_{\alpha}+1}}}  \cdots t_{k_{r_{\alpha+1}-1}}^ {i_{k_{r_{\alpha+1}-1}}}\cdot t_{k_{r_{\alpha+1}}}^{i^{\prime}_{\alpha}},$$
in case $i^{\prime}_{\alpha}>0$ and $\sum_{j=r_{\alpha}+1}^{r_{\alpha+1}-1}i_{k_{j}}+i^{\prime}_{\alpha}< k_{r_{\alpha}}$ (i.e., $k_{r_{\alpha}}-\sum_{j=r_{\alpha}+1}^{r_{\alpha+1}-1}i_{k_{j}}-i^{\prime}_{\alpha}>0$);
\item $$\pi_{\alpha}=t_{k_{r_{\alpha}}}^{k_{r_{\alpha}}-\sum_{j=r_{\alpha}+1}^{r_{\alpha+1}-1}i_{k_{j}}-i^{\prime}_{\alpha}} \cdot t_{k_{r_{\alpha}+1}}^{i_{k_{r_{\alpha}+1}}}  \cdots t_{k_{r_{\alpha+1}-1}}^ {i_{k_{r_{\alpha+1}-1}}},$$
in case $i^{\prime}_{\alpha}=0$ and $\sum_{j=r_{\alpha}+1}^{r_{\alpha+1}-1}i_{k_{j}}+i^{\prime}_{\alpha}< k_{r_{\alpha}}$ (i.e., $k_{r_{\alpha}}-\sum_{j=r_{\alpha}+1}^{r_{\alpha+1}-1}i_{k_{j}}-i^{\prime}_{\alpha}>0$);
\item $$\pi_{\alpha}=t_{k_{r_{\alpha}+1}}^{i_{k_{r_{\alpha}+1}}}  \cdots t_{k_{r_{\alpha+1}-1}}^ {i_{k_{r_{\alpha+1}-1}}}\cdot t_{k_{r_{\alpha+1}}}^{i^{\prime}_{\alpha}},$$
in case $i^{\prime}_{\alpha}>0$ and $\sum_{j=r_{\alpha}+1}^{r_{\alpha+1}-1}i_{k_{j}}+i^{\prime}_{\alpha}= k_{r_{\alpha}}$ (i.e., $k_{r_{\alpha}}-\sum_{j=r_{\alpha}+1}^{r_{\alpha+1}-1}i_{k_{j}}-i^{\prime}_{\alpha}=0$);
\item $$\pi_{\alpha}=t_{k_{r_{\alpha}+1}}^{i_{k_{r_{\alpha}+1}}}  \cdots t_{k_{r_{\alpha+1}-1}}^ {i_{k_{r_{\alpha+1}-1}}},$$
in case $i^{\prime}_{\alpha}=0$ and $\sum_{j=r_{\alpha}+1}^{r_{\alpha+1}-1}i_{k_{j}}+i^{\prime}_{\alpha}= k_{r_{\alpha}}$ (i.e., $k_{r_{\alpha}}-\sum_{j=r_{\alpha}+1}^{r_{\alpha+1}-1}i_{k_{j}}-i^{\prime}_{\alpha}=0$)
\end{itemize}

Now, by considering the standard $OGS$ presentation of $\pi_{\alpha}$ for $1\leq \alpha\leq q$ as an element in $S_n$ the following holds:

\begin{itemize}
    \item In all the possibly cases of $\pi_{\alpha}$, $$maj(\pi_{\alpha})=k_{r_{\alpha}}-\sum_{j=r_{\alpha}+1}^{r_{\alpha+1}-1}i_{k_{j}}-i^{\prime}_{\alpha}+\sum_{j=r_{\alpha}+1}^{r_{\alpha+1}-1}i_{k_{j}}+i^{\prime}_{\alpha}=k_{r_{\alpha}}.$$
    \item The smallest $p$ such that a non-zero power of $t_p$ is a subword of the standard $OGS$ presentation of $\pi_{\alpha}$ is $p=k_{r_{\alpha}}=maj(\pi_{\alpha})$ or \\ $p=k_{r_{\alpha}+1}>maj(\pi_{\alpha})$.
    \item The greatest $p$ such that a non-zero power of $t_p$ is a subword of the standard $OGS$ presentation of $\pi_{\alpha-1}$ (for $\alpha\geq 2$) is $p=k_{r_{\alpha}}=maj(\pi_{\alpha})$ or $p=k_{r_{\alpha}-1}<maj(\pi_{\alpha})$.
\end{itemize}

Hence, by Definition  \ref{canonical-factorization-def}, 
$$\pi=\pi_1\cdot \pi_2\cdots \pi_q$$ is a standard $OGS$ elementary factorization of $\pi$, such that every element $\pi_{\alpha}$ for $1\leq \alpha\leq q$ is the elementary factor $\pi^{(\alpha)}$ of $\pi$, where $z(\pi)=q$. 

\end{proof}

\begin{example}
Let $\pi=\tau_5^{-3}\cdot \tau_7^2\cdot \tau_8^{-4}\cdot \tau_9^4\cdot \tau_{11}^{-3}\cdot \tau_{12}^4$.
\\

Then by Theorem \ref {main-ogs-sn} $\pi\in \dot{S}_{12}$.
\\

Moreover,
\begin{itemize}
    \item $i_{k_1}=-3<0, \quad i_{k_3}=-4<0, \quad i_{k_5}=-3<0$;
    \item $i_{k_2}=2>0, \quad i_{k_4}=4>0, \quad i_{k_6}=4>0$.
\end{itemize}

Hence, by Theorem \ref{factorization-} $z(\pi)=3$ and the standard $OGS$ elementary factorization of $\pi$ as follow:

 $$\pi^{(1)}=\tau_5^{-3}\cdot \tau_7^2\cdot \tau_8,\quad \pi^{(2)}=\tau_8^{-5}\cdot \tau_9^4\cdot \tau_{11}, \quad \pi^{(3)}=\tau_{11}^{-4}\cdot \tau_{12}^4.$$

Indeed,

$$\pi^{(1)}\cdot \pi^{(2)}\cdot  \pi^{(3)}=(\tau_5^{-3}\cdot \tau_7^2\cdot \tau_8)\cdot (\tau_8^{-5}\cdot \tau_9^4\cdot \tau_{11})\cdot (\tau_{11}^{-4}\cdot \tau_{12}^4)$$
$$=\tau_5^{-3}\cdot \tau_7^2\cdot \tau_8^{-4}\cdot \tau_9^4\cdot \tau_{11}^{-3}\cdot \tau_{12}^4=\pi.$$

\end{example}

\section{Generalization of the standard OGS factorization for $B_{n}$}\label{gen-stan-factor}

There is defined In \cite{S1} a standard $OGS$ elementary factorization for the elements of the symmetric group $S_n$. This definition eases the calculation of the Coxeter length and the descent sets of the elements of $S_n$.
In this section, we generalize the definition of  the standard OGS elementary factorization for the group $B_n$.

\begin{theorem}\label{uvuv}
 Let $\pi \in B_{n}$, then $\pi$ has a unique presentation in the following form:
$u_{1} \cdot v_{1} \cdot u_{2} \cdot v_{2} \cdots u_{r}$ for some $r$, such that the following holds:
\begin{itemize}
    \item $u_{j} \in \dot{S}_{n}$, for every $1\leqslant j\leqslant r$.
    \item $v_{j} = \tau_{p_{j}}^{p_{j}}$, where for $j>j^{\prime}$, $p_{j} > p_{j^{\prime}}$.
    \item  For every $1 \leqslant j \leqslant r$, either $u_{j} = 1$ or $u_{j} = \tau_{p_{j_{1}}}^{i_{p_{j_{1}}}} \cdots \tau_{p_{j_{z_{j}}}}^{i_{p_{j_{z_{j}}}}} $ by the
generalized standard $OGS$ presentation (as it is described in Theorem \ref{ogs-bn}), where $p_{j_{1}} \geq p_{j-1}$, for every $2 \leqslant j \leqslant r$,and $p_{j_{z_{j}}} \leqslant p_{j}$ , for every $1 \leqslant j \leqslant r-1$.
\end{itemize}

\end{theorem}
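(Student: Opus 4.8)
The plan is to read the factorization off from the generalized standard $OGS$ presentation of Theorem \ref{ogs-bn}, so that uniqueness will essentially be inherited from the uniqueness of that presentation. Concretely, I would start from the generalized standard $OGS$ word $\pi=\tau_{k_{1}}^{i_{k_{1}}}\cdots\tau_{k_{m}}^{i_{k_{m}}}$, with all exponents nonzero, $k_{1}<\cdots<k_{m}$ and $-k_{j}\le i_{k_{j}}<k_{j}$, and scan it from left to right, cutting it into consecutive blocks at indices $p_{1}<p_{2}<\cdots<p_{r-1}$ and inserting a sign-change factor $v_{l}=\tau_{p_{l}}^{p_{l}}$ at each cut. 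The cut that terminates a block is placed at the first index where keeping the block intact would violate the partial-sum conditions of Theorem \ref{main-ogs-sn} for membership in $\dot{S}_{n}$ (intuitively, the first index at which the running left partial sum of the exponents is forced below the bound $-k$); at that point one uses the identities $\tau_{k}^{-k}=\tau_{k}^{k}$ and $\tau_{k}^{i}=\tau_{k}^{-k}\cdot\tau_{k}^{\,i+k}$ both to split off $v_{l}$ and to feed the next block a leading exponent of the sign required by Theorem \ref{main-ogs-sn}. By Lemma \ref{tau-1+1} and Lemma \ref{tau-i+i} each block, once the sign change has been extracted, is literally its own generalized standard $OGS$ word, and by construction it has exponent sum $0$ with admissible partial sums, so Theorem \ref{main-ogs-sn} certifies that it lies in $\dot{S}_{n}$; these are the factors $u_{l}$. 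The ordering $p_{1}<\cdots<p_{r-1}$ is automatic since the cuts increase, and the window conditions on the $OGS$ support of $u_{j}$ (bottom index $\ge p_{j-1}$, top index $\le p_{j}$) hold because $u_{j}$ is the portion of the word lying between the cuts $p_{j-1}$ and $p_{j}$, any boundary coincidence at $p_{j-1}$ or $p_{j}$ being absorbed into $v_{j-1}$ respectively $v_{j}$.

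\textbf{Uniqueness.} For the converse, given any presentation $\pi=u_{1}v_{1}\cdots u_{r}$ satisfying the three conditions, I would concatenate the generalized standard $OGS$ words of $u_{1},\ v_{1}=\tau_{p_{1}}^{-p_{1}},\ u_{2},\ v_{2}=\tau_{p_{2}}^{-p_{2}},\ldots$ and merge powers of equal $\tau_{k}$; the window conditions force successive factors to share an index only at the $p_{j}$'s, so after merging (and reducing exponents modulo $2k$) each index of $B_{n}$ occurs at most once, and the result is a genuine generalized standard $OGS$ word for $\pi$, hence the unique one of Theorem \ref{ogs-bn}. It then suffices to check that the breakpoints $p_{j}$ and the factors $u_{j}$ are forced by that word: $p_{1}$ must be the least index at which the prefix of the word stops representing an element of $\dot{S}_{n}$ (tested by Theorem \ref{main-ogs-sn}), $v_{1}$ is the attached $\tau_{p_{1}}^{p_{1}}$, and peeling it off and iterating determines $p_{2},u_{2},v_{2},\ldots$. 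Together with the uniqueness of the $OGS$ word this gives $r=r'$, $p_{j}=p_{j}'$, $u_{j}=u_{j}'$ and $v_{j}=v_{j}'$.

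\textbf{Main obstacle.} The substance of the proof is the existence step --- making the breakpoint rule precise enough that each block provably satisfies the $\dot{S}_{n}$-criterion of Theorem \ref{main-ogs-sn}, while correctly tracking the two effects that complicate the bookkeeping: the wrap-around identity $\tau_{k}^{-k}=\tau_{k}^{k}$, which flips the sign of an exponent each time a $v$-factor is split off or reabsorbed, and the boundary indices, where a single index $p_{j}$ may simultaneously be the top index of $u_{j}$, the base of $v_{j}$, and the bottom index of $u_{j+1}$, so that a ``cut'' need not sit at a clean position of the $OGS$ word. Keeping these exponent manipulations under control --- for which the exchange laws established just above are the main computational tool --- is where essentially all of the effort goes; once they are in hand, the ordering conditions of the statement and the uniqueness assertion follow without further difficulty.
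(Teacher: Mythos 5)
Your high-level framework coincides with the paper's: read the factorization off the generalized standard $OGS$ word of $\pi$, certify each block as an element of $\dot{S}_n$ via Theorem \ref{main-ogs-sn}, and insert sign-change factors $v_j=\tau_{p_j}^{p_j}$ at the cuts. The genuine gap is that the one thing carrying the proof --- the precise rule that determines the cut values $p_j$ and how the exponent at a cut is split among $u_j$, $v_j$ and $u_{j+1}$ --- is exactly what you leave unspecified, and the informal rule you do state is wrong. The paper processes the word from the right: with the suffix sums $\rho_x=\sum_{j=x}^{m} i_{k_j}$ it takes the largest $x$ with either $\rho_{x+1}\le k_x\le\rho_x$ (cut at the existing index $k_x$, the exponent splitting as $i_{k_x}=(\rho_x-k_x)+k_x+(-\rho_{x+1})$) or $k_{x-1}<\rho_x<k_x$ (cut at the new value $p=\rho_x$), and then iterates on the shortened prefix; anchoring at the right end is what makes the borrowed leading exponents well defined. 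Your left-to-right scan has a built-in circularity (the borrowed leading exponent of a block equals minus the sum of the rest of the block, which is unknown until you know where the block ends), and your stated breakpoint criterion, ``the first index at which the running left partial sum is forced below $-k$,'' already fails on the paper's own example $\pi=\tau_3^{2}\cdot\tau_4^{3}\cdot\tau_5^{-2}\cdot\tau_7^{4}\cdot\tau_8^{2}\cdot\tau_9^{4}$: scanning from the left, the $\dot{S}_n$ conditions are violated because the partial sums are positive (never below $-k$), and the correct first cut is $v_1=\tau_2^{2}$ at the value $p_1=2$, which is not an index occurring in the word at all but a derived quantity (a suffix sum of the remaining prefix).

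The same defect undermines your uniqueness sketch: its key assertion, that $p_1$ is ``the least index at which the prefix of the word stops representing an element of $\dot{S}_n$,'' would give $3$ rather than $2$ in that example, and in any case the forcing of the breakpoints is asserted rather than proved. Since you yourself concede that ``essentially all of the effort'' lies in making the breakpoint rule precise, what you have is a plan rather than a proof; the missing piece is precisely the quantitative bookkeeping (the suffix-sum criterion with its two cases, plus the separate treatment of $i_{k_m}<0$) that constitutes the paper's argument. One point in your favour: the idea of proving uniqueness by re-merging the factors into a single $OGS$ word and invoking Theorem \ref{ogs-bn} is sensible and goes beyond the paper, whose written proof only constructs the presentation and never addresses uniqueness --- but to count for anything it would need a correct and proved description of how the merged word forces the $p_j$.
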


\begin{proof}
Let $\pi=\tau_{k_{1}}^{i_{k_{1}}} \cdot \tau_{k_{2}}^{i_{k_{2}}} \cdots \tau_{k_{m}}^{i_{k_{m}}} $ be expressed by the generalized standard $OGS$ presentation.\\
-  First, We find $u_{r}$ and $v_{r-1}$, where we consider two cases which depend on the possibly values of $i_{k_{m}}$ as follow:
\begin{enumerate}
    \item $1\leq i_{k_m}<k_m$.
    \item $-k_m\leq i_{k_m}<0$.
\end{enumerate}
    
 - Consider case 1 (i.e.,  $1\leq i_{k_m}<k_m$).

\begin{itemize}
\item  By Theorem \ref{main-ogs-sn}, $\tau_{k_{m}}^{i_{k_{m}}}$ can be considered as a subword of an element in $\dot{S}_n$. Hence, we consider $\tau_{k_{m}}^{i_{k_{m}}}$ as a subword of $u_{r}$ and $p_{r_{z_{r}}}=k_{m}$.\\
  
  -  Now, We consider ${k_{m-1}}$ and $i_{k_{m-1}}$ in case 1 ($1\leq i_{k_m}<k_m$), where we divide the proof into the following two subcases:
 \begin{itemize}
     \item subcase 1: $k_{m-1} < i_{k_{m}}$.
     \item subcase 2: $k_{m-1} \geq i_{k_{m}}$.
 \end{itemize}
  
    \item subcase 1: If $k_{m-1} < i_{k_{m}}$, then put $u_r$ to be $u_{r}= \tau_{i_{k_{m}}}^{-i_{k_{m}}}\cdot \tau_{k_{m}}^{i_{k_{m}}}$, and put $v_{r-1}$ to be $v_{r-1} = \tau_{i_{k_{m}}}^{i_{k_{m}}} $.\\
Then, \\ 
$$\pi=\tau_{k_{1}}^{i_{k_{1}}} \cdot \tau_{k_{2}}^{i_{k_{2}}} \cdots \tau_{k_{m-1}}^{i_{k_{m-1}}} \cdot \tau_{i_{k_{m}}}^{i_{k_{m}}}\cdot \tau_{i_{k_{m}}}^{-i_{k_{m}}}\cdot \tau_{k_{m}}^{i_{k_{m}}} =\tau_{k_{1}}^{i_{k_{1}}} \cdot \tau_{k_{2}}^{i_{k_{2}}} \cdots \tau_{k_{m-1}}^{i_{k_{m-1}}} \cdot v_{r-1} \cdot u_{r}.$$

Then, we look at the subword $$\tilde{\pi}=\tau_{k_{1}}^{i_{k_{1}}} \cdot \tau_{k_{2}}^{i_{k_{2}}} \cdots \tau_{k_{m-1}}^{i_{k_{m-1}}}$$ of $\pi$ , and we consider $u_{r-1}$ and $v_{r-2}$ by applying the same algorithm as we have done for finding $u_r$ and $v_{r-1}$, by considering  $\tilde{\pi}$ instead of $\pi$.

\item subcase 2:  If $k_{m-1} \geq i_{k_{m}}$, then there are the following two possibilities:
   
   \begin{itemize}
       \item Possibility 1: $i_{k_{m}}+ i_{k_{m-1}}\geq k_{m-1}$.
       \item Possibility 2: $i_{k_{m}}+ i_{k_{m-1}}< k_{m-1}$.
   \end{itemize}
   
   \item Assume possibility 1. Then,  $i_{k_{m}}+ i_{k_{m-1}}\geq k_{m-1}$. Then we put $u_r$ to be  $u_{r}=\tau_{k_{m-1}}^{-i_{k_{m}}}\cdot \tau_{k_{m}}^{i_{k_{m}}}$  and we put $v_{r-1}$ to be  $v_{r-1}= \tau_{k_{m-1}}^{k_{m-1}}$. Then, 
   $$\pi = \tau_{k_{1}}^{i_{k_{1}}} \cdot \tau_{k_{2}}^{i_{k_{2}}} \cdots \tau_{k_{m-2}}^{i_{k_{m-2}}}\cdot  \tau_{k_{m-1}}^{{i_{k_{m-1}}+i_{k_m}-k_{m-1}}}\cdot \tau_{k_{m-1}}^{k_{m-1}}\cdot (\tau_{k_{m-1}}^{-i_{k_m}}\cdot \tau_{k_m}^{i_{k_m}})$$ $$= \tau_{k_{1}}^{i_{k_{1}}} \cdot \tau_{k_{2}}^{i_{k_{2}}} \cdots \tau_{k_{m-2}}^{i_{k_{m-2}}}\cdot  \tau_{k_{m-1}}^{{i_{k_{m-1}}+i_{k_m}-k_{m-1}}} \cdot v_{r-1} \cdot u_{r}.$$
Then, we look at the subword $$\tilde{\pi}=\tau_{k_{1}}^{i_{k_{1}}} \cdot \tau_{k_{2}}^{i_{k_{2}}} \cdots \tau_{k_{m-2}}^{i_{k_{m-2}}}\cdot \tau_{k_{m-1}}^{{i_{k_{m-1}}+i_{k_m}-k_{m-1}}}$$ of $\pi$ , and we consider $u_{r-1}$ and $v_{r-2}$ by applying the same algorithm as we have done for finding $u_r$ and $v_{r-1}$, by considering  $\tilde{\pi}$ instead of $\pi$.

   \item Assume possibility 2. Then, $i_{k_{m}}+ i_{k_{m-1}}< k_{m-1}$. Then, $\tau_{k_{m-1}}^{i_{k_{m-1}}}\cdot \tau_{k_{m}}^{i_{k_{m}}}$ is a subword of $u_{r}$ and $p_{r_{z_{r-1}}}= k_{m-1}$. Then, we consider $\tau_{k_{m-2}}$ and  $i_{k_{m-2}}$. Then, We analyze $\tau_{k_{m-2}}$ and $i_{k_{m-2}}$ by the same argument as we have analysed $\tau_{k_{m-1}}$ and  $i_{k_{m-1}}$.\\
First, dividing the proof to two main subcases
\begin{itemize}
    \item Subcase 1: $k_{m-2}< i_{k_{m}}+ i_{k_{m-1}}$.
    \item Subcase 2: $k_{m-2}\geq i_{k_{m}}+ i_{k_{m-1}}$.
     \end{itemize}
\item We analyze subcase 1 by the same algorithm as we have done in the subcase $k_{m-1}<i_{k_m}$. i.e., put $u_r$ to be $u_r=\tau_{i_{k_{m}}+ i_{k_{m-1}}}^{-i_{k_{m}}-i_{k_{m-1}}}\cdot \tau_{k_{m-1}}^{i_{k_{m-1}} }\cdot \tau_{k_{m}}^{i_{k_{m}} }$ and put $v_{r-1}$ to be $v_{r-1}= \tau_{i_{k_{m}}+ i_{k_{m-1}}}^{i_{k_{m}}+i_{k_{m-1}}}.$
\item  We analyze subcase 2 by the same algorithm as we have done in the subcase $k_{m-1}\geq i_{k_m}$ . i.e., considering two possibilities.
\begin{itemize}
    \item Possibility 1: $i_{k_{m}}+ i_{k_{m-1}}+i_{k_{m-2}}\geq k_{m-2}$.
    \item Possibility 2: $i_{k_{m}}+ i_{k_{m-1}}+i_{k_{m-2}}< k_{m-2}$.
\end{itemize}
\item Then, in case of  possibility 1, we put $u_r$ and $v_{r-1}$ by the same way  as we have done for the possibility  $i_{k_{m}}+ i_{k_{m-1}}\geq k_{m-1}$.
\item In case of possibility 2  ($i_{k_{m}}+i_{k_{m-1}}+i_{k_{m-2}} < k_{m-2}$), we conclude that $\tau_{k_{m-2}}^{i_{k_{m-2}}} \cdot \tau_{k_{m-1}}^{i_{k_{m-1}}}\cdot \tau_{k_{m}}^{i_{k_{m}}}$ is a subword of $u_{r}$ and $p_{r_{z_{r-2}}} = k_{m-2}$. Then, considering $\tau_{k_{m-3}}$ and $i_{k_{m-3}}$ by continuing  the same process
as we have done in the possibility $i_{k_{m}}+i_{k_{m-1}} < k_{m-1}$ where we have considered $\tau_{k_{m-2}}$ and $i_{k_{m-2}}$.
\item Then we continue the algorithm by the same method. 
\item Finally, we get the following conclusion from the described algorithm concerning $u_r$ and $v_{r-1}$ in case 1 ($i_{k_m}>0$). For every $1\leq x\leq m$,  Let define $\rho_x$ to be $\rho_x=\sum_{j=x}^{m} i_{k_{j}}$.
Then, we consider the largest $x$ such that $1\leq x\leq m$ such that one of the following two conditions holds
\begin{itemize}
    \item $\rho_{x+1}\leq  k_{x}\leq \rho_{x}$.
    \item $k_{x-1}<\rho_{x}<k_{x}$.
\end{itemize}
 Then $u_r$ and $v_{r-1}$ have the following presentation by the generalized standard $OGS$.
\begin{itemize}
    \item If ~~$\rho_{x+1}\leq  k_{x}\leq \rho_{x}$, then $$u_{r}=\tau_{k_{x}}^{-\rho_{x+1}}\cdot \tau_{k_{x+1}}^{i_{k_{x+1}}}\cdot\tau_{k_{x+2}}^{i_{k_{x+2}}}  \cdots \tau_{k_m}^{i_{k_m}}\quad v_{r-1}=\tau_{k_{x}}^{k_{x}}.$$
    $$\pi=\tau_{k_1}^{i_{k_1}}\cdot\tau_{k_2}^{i_{k_2}}\cdots \tau_{k_{x-1}}^{i_{k_{x-1}}}\cdot \tau_{k_x}^{\rho_x-k_x} \cdot v_{r-1}\cdot u_r .$$
    Hence, we find $u_{r-1}$ and $v_{r-2}$ by considering 
    $$\tilde{\pi}=\tau_{k_1}^{i_{k_1}}\cdot\tau_{k_2}^{i_{k_2}}\cdots \tau_{k_{x-1}}^{i_{k_{x-1}}}\cdot\tau_{k_x}^{\rho_x-k_x},$$ where we use the same method which is used to find $u_r$ and $v_{r-1}$.
    \item If ~~$k_{x-1}<\rho_{x}<k_{x}$, then
    $$u_{r}=\tau_{\rho_{x}}^{- \rho_{x}}\cdot \tau_{k_{x}}^{i_{k_{x}}}\cdot \tau_{k_{x+1}}^{i_{k_{x+1}}}\cdots \tau_{k_m}^{i_{k_m}}\quad v_{r-1}=\tau_{\rho_{x}}^{\rho_{x}}$$ $$\pi=\tau_{k_1}^{i_{k_1}}\cdot\tau_{k_2}^{i_{k_2}}\cdots \tau_{k_{x-1}}^{i_{k_{x-1}}} \cdot v_{r-1}\cdot u_r .$$
    Hence, we find $u_{r-1}$ and $v_{r-2}$ by considering 
    $$\tilde{\pi}=\tau_{k_1}^{i_{k_1}}\cdot\tau_{k_2}^{i_{k_2}}\cdots \tau_{k_{x-1}}^{i_{k_{x-1}}},$$
     where we use the same method which is used to find $u_r$ and $v_{r-1}$.
\end{itemize}

\end{itemize}

- Now, We turn to case 2 (i.e., $-k_m\leq i_{k_{m}}<0$ ).\\

If $i_{k_{m}}<0$ then, $u_{r}$ is defined to be $1$, and $v_{r-1}$ is defined to be $\tau_{k_{m}}^{-k_{m}}=\tau_{k_m}^{k_m}$. Then, 

$$\tau_{k_m}^{i_{k_m}}=\tau_{k_m}^{-k_m+(i_{k_m}+k_m)}=\tau_{k_m}^{i_{k_m}+k_m}\cdot \tau_{k_m}^{-k_m}$$
Since, $-k_m\leq i_{k_m}<0$, we get that $0\leq k_m+i_{k_m}<k_m$.
Then, $$\pi = \tau_{k_{1}}^{i_{k_{1}}} \cdot \tau_{k_{2}}^{i_{k_{2}}} \cdots \tau_{k_{m}}^{i_{k_{m}+k_{m}}} \cdot v_{r-1} \cdot u_{r},$$  where $0 \leqslant i_{k_{m}}+k_{m} < k_{m}$ .
Now, we consider the following two subcases:
\begin{itemize}
    \item Subcase 1: $i_{k_m}+k_m>0$.
    \item Subcase 2: If $i_{k_m}+k_m=0$.
\end{itemize}
Consider subcase 1. 
Then, $i_{k_m}+k_m>0$, and then we find $u_{r-1}$ by the same algorithm as we have done for finding $u_r$ in case 1 (The case of $1\leq i_{k_m}<k_m$).\\

Now, consider subcase 2. Then, $i_{k_m}+k_m=0$. Hence,  
$$\pi = \tau_{k_{1}}^{i_{k_{1}}} \cdot \tau_{k_{2}}^{i_{k_{2}}} \cdots \tau_{k_{m-1}}^{i_{k_{m-1}}}\cdot  \tau_{k_{m}}^{i_{k_{m}+k_{m}}} \cdot v_{r-1} \cdot u_{r}=\tau_{k_{1}}^{i_{k_{1}}} \cdot \tau_{k_{2}}^{i_{k_{2}}} \cdots \tau_{k_{m-1}}^{i_{k_{m-1}}}\cdot v_{r-1} \cdot u_{r}.$$
Then, we look at the subword $$\tilde{\pi}=\tau_{k_{1}}^{i_{k_{1}}} \cdot \tau_{k_{2}}^{i_{k_{2}}} \cdots \tau_{k_{m-1}}^{i_{k_{m-1}}}$$ of $\pi$ , and we consider $u_{r-1}$ and $v_{r-2}$ by applying the same algorithm as we have done for finding $u_r$ and $v_{r-1}$, by considering  $\tilde{\pi}$ instead of $\pi$.

\end{proof}

\begin{example}
Let $\pi=\tau_3^2\cdot \tau_4^3\cdot \tau_5^{-2}\cdot \tau_7^4\cdot \tau_8^2\cdot \tau_9^4$.
\\

Then, applying the algorithm as it is described in Theorem \ref{uvuv}.

First notice, $$i_{k_6}=4>0.$$ Hence $$\tau_{k_6}^{i_{k_6}}=\tau_9^4$$ is a subword of $u_r$.
Now notice, $$i_{k_6}+i_{k_5}=6,\quad  i_{k_6}+i_{k_5}+i_{k_4}=11, \quad  k_4=7,$$ 
Hence, we have:
$$i_{k_6}+i_{k_5}<k_{4}<i_{k_6}+i_{k_5}+i_{k_4}$$
Therefore, we put $u_r$ and $v_{r-1}$ as follow: $$u_r=\tau_{k_4}^{-i_{k_5}-i_{k_6}}\cdot \tau_{k_5}^{i_{k_5}}\cdot \tau_{k_6}^{i_{k_6}}=\tau_7^{-6}\cdot \tau_8^2\cdot \tau_9^4,\quad v_{r-1}=\tau_{k_4}^{k_4}=\tau_7^7.$$
Hence, we get
$$\pi=\tau_{k_1}^{i_{k_1}}\cdot \tau_{k_2}^{i_{k_2}}\cdot \tau_{k_3}^{i_{k_3}}\cdot \tau_{k_4}^{i_{k_4}+k_4-k_5-k_6}\cdot \tau_{k_4}^{k_4}\cdot (\tau_{k_4}^{-i_{k_5}-i_{k_6}}\cdot \tau_{k_5}^{i_{k_5}}\cdot \tau_{k_6}^{i_{k_6}})=$$
$$\tau_3^2\cdot \tau_4^3\cdot \tau_5^{-2}\cdot \tau_7^{3}\cdot \tau_7^{7}\cdot (\tau_7^{-6}\cdot \tau_8^2\cdot \tau_9^4).$$

Now, look at the following subword of $\pi$: $$\tau_{k_1}^{i_{k_1}}\cdot \tau_{k_2}^{i_{k_2}}\cdot\tau_{k_3}^{i_{k_3}}\cdot\tau_{k_4}^{i'_{k_4}}=\tau_3^2\cdot \tau_4^3\cdot \tau_5^{-2}\cdot \tau_7^3.$$
Where, $$i'_{k_4}=i_{k_4}+k_4-i_{k_5}-i_{k_6}.$$

Since, $$i'_{k_4}=3>0,$$

$$\tau_{k_4}^{i'_{k_4}}=\tau_7^3$$
is a subword of $u_{r-1}$.

Now, the following is satisfied:
$$i'_{k_4}+i_{k_3}=3-2=1, \quad k_2=4, \quad  i'_{k_4}+i_{k_3}+i_{k_2}=4$$
Hence,
$$i'_{k_4}+i_{k_3}<k_2\leq i'_{k_4}+i_{k_3}+i_{k_2}$$
Therefore, we put $u_{r-1}$ and $v_{r-1}$ to be as follow: $$u_{r-1}=\tau_{k_2}^{-i_{k_3}-i'_{k_4}}\cdot \tau_{k_3}^{i_{k_3}}\cdot \tau_{k_4}^{i'_{k_4}}=\tau_4^{-1}\cdot \tau_5^{-2}\cdot \tau_7^3, \quad v_{r-2}=\tau_{k_2}^{k_2}=\tau_4^4.$$

Then we get:

$$\pi=\tau_{k_1}^{i_{k_1}}\cdot \tau_{k_2}^{k_2}\cdot (\tau_{k_2}^{-i_{k_3}-i'_{k_4}}\cdot \tau_{k_3}^{i_{k_3}}\cdot \tau_{k_4}^{i'_{k_4}})\cdot \tau_{k_4}^{k_4}\cdot  (\tau_{k_4}^{-i_{k_5}-i_{k_6}}\cdot \tau_{k_5}^{i_{k_5}}\cdot \tau_{k_6}^{i_{k_6}})=$$
$$\tau_3^2\cdot \tau_4^4\cdot (\tau_4^{-1}\cdot \tau_5^{-2}\cdot \tau_7^3)\cdot \tau_7^{7}\cdot(\tau_7^{-6}\cdot \tau_8^2\cdot \tau_9^4).  $$

Now, consider the following subword of $\pi$: $$\tau_{k_1}^{i_{k_1}}=\tau_3^2.$$

Since $$k_1=2>0,$$
$$\tau_{k_1}^{i_{k_1}}=\tau_3^2$$
is a subword of $u_{r-2}$.
Now, notice $$i_{k_1}=2, \quad k_1=3.$$
Hence, $$i_{k_1}<k_1.$$
Therefore, we  get $r-3=1$, and we put $u_{r-2}=u_2$, $v_{r-3}=v_1$ and $u_{r-3}=u_1$ as follow:
$$u_{r-2}=u_2=\tau_{i_{k_1}}^{-i_{k_1}}\cdot \tau_{k_1}^{i_{k_1}}=\tau_2^{-2}\cdot \tau_3^3, \quad v_1=\tau_{i_{k_1}}^{i_{k_1}}=\tau_2^2, \quad u_1=1.$$

Hence,

$$\pi=u_1\cdot v_1\cdot u_2\cdot v_2\cdot u_3\cdot v_3\cdot u_4=\tau_2^2\cdot(\tau_2^{-2}\cdot \tau_3^2)\cdot \tau_4^4\cdot(\tau_4^{-1}\cdot \tau_5^{-2}\cdot \tau_7^3)\cdot \tau_7^{7}\cdot(\tau_7^{-6}\cdot \tau_8^2\cdot \tau_9^4).$$

Where,
$$u_1=1\quad\quad  v_1=\tau_2^2=\tau_2^{-2}\quad\quad  u_2=\tau_2^{-2}\cdot \tau_3^2\quad\quad v_2=\tau_4^4=\tau_4^{-4}$$ $$u_3=\tau_4^{-1}\cdot \tau_5^{-2}\cdot \tau_7^3\quad\quad
    v_3=\tau_7^7=\tau_7^{-7}\quad\quad  u_4=\tau_7^{-6}\cdot \tau_8^2\cdot \tau_9^4.$$

\end{example}

\section{The Coxeter length function in $B_{n}$ by applying the generalized standard $OGS$}\label{cox-length} 

In this section  We give an explicit formula for $\ell(\pi)$ considering the generalized standard $OGS$ presentation, as it is described in Theorem \ref{ogs-bn}, and the presentation that is described in Theorem \ref{uvuv}.
We start with a lemma about the length of elements of the form $\tau_{k}^{-k}$. Then, we find the length of elements in the subgroup $\dot{S}_n$. Finally we give a formula for the length of a general element of $B_n$.

\begin{lemma}\label{tau-k-k} For $1\leq k\leq n$, let $\tau_{k}$ be the element of $B_n$ as it is defined in Definition \ref{tau}. Then, the following holds:
 $$\ell(\tau_{k}^{-k}) = \sum_{i=0}^{k-1} 2i-1 = k^{2}$$\\
\end{lemma}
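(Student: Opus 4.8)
The plan is to compute $\ell(\tau_k^{-k})$ directly from the permutation (signed-permutation) description of $\tau_k^{-k}$, using the standard combinatorial formula for the Coxeter length in $B_n$. By Remark \ref{tau-power} (the case $i_k=-k$), the element $\tau_k^{-k}$ is the signed permutation sending $j\mapsto -j$ for $1\leq j\leq k$ and $j\mapsto j$ for $k+1\leq j\leq n$; in one-line notation $\tau_k^{-k}=[-1;\,-2;\,\ldots;\,-k;\,k+1;\,\ldots;\,n]$. So first I would recall (from \cite{BB}, Chapter 8.1) the length formula for $B_n$: for $\pi\in B_n$,
$$\ell(\pi)=\mathrm{inv}(\pi)+\sum_{\substack{1\leq i\leq n\\ \pi(i)<0}}|\pi(i)|,$$
where $\mathrm{inv}(\pi)=|\{(i,j):1\leq i<j\leq n,\ \pi(i)>\pi(j)\}|$ counts the ordinary inversions among the values $\pi(1),\ldots,\pi(n)$ (equivalently one uses the formula $\ell(\pi)=\mathrm{neg}(\pi)+\mathrm{inv}(\pi)+2\,\mathrm{nsp}(\pi)$; any of the standard equivalent forms works). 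Alternatively, since the statement suggests summing $2i-1$, I would instead verify it by induction on $k$ using $\ell(\tau_{k}^{-k})-\ell(\tau_{k-1}^{-(k-1)})$, or simply plug the one-line form into the formula above.

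Carrying out the direct computation: in $\tau_k^{-k}$ exactly the entries $\pi(1)=-1,\ldots,\pi(k)=-k$ are negative, contributing $\sum_{i=1}^{k}|\pi(i)|=\sum_{i=1}^k i=\binom{k+1}{2}$ to the "negative sum" part. For the inversion count, note the value sequence is $-1,-2,\ldots,-k,k+1,\ldots,n$: among the first $k$ entries every pair $(i,j)$ with $i<j$ has $\pi(i)=-i>-j=\pi(j)$, so all $\binom{k}{2}$ such pairs are inversions; pairs with $i\leq k<j$ have $\pi(i)=-i<0<\pi(j)$, not inversions; pairs with $k<i<j$ are increasing, not inversions. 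Hence $\mathrm{inv}=\binom{k}{2}$. Adding, $\ell(\tau_k^{-k})=\binom{k}{2}+\binom{k+1}{2}=\frac{k(k-1)}{2}+\frac{k(k+1)}{2}=k^2$. Finally I would reconcile this with the telescoping expression in the statement: $\sum_{i=0}^{k-1}(2i+1)=k^2$ (I read the paper's "$2i-1$" as the summand $2i+1$ over $i=0,\ldots,k-1$, or equivalently $\sum_{i=1}^{k}(2i-1)=k^2$), which is the classical identity for the sum of the first $k$ odd numbers.

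The only real subtlety — not so much an obstacle as a bookkeeping point — is making sure the length formula for $B_n$ is stated in a form consistent with the paper's conventions: the paper uses left-to-right multiplication of permutations and the convention $\pi(-i)=-\pi(i)$, and describes $s_0$ as the transposition $1\leftrightarrow -1$ and $s_i$ ($i\geq 1$) as $i\leftrightarrow i+1$ together with $-i\leftrightarrow -(i+1)$. I would therefore either cite \cite{BB} for the length formula in exactly this convention, or, to be safe and self-contained, give the short inductive argument: $\tau_k^{-k}=s_0\cdot s_1\cdots s_{k-1}\cdot(\text{something})$ is awkward, so instead use that $\tau_k^{-k}$ fixes $n,n-1,\ldots,k+1$ and restricts to the longest element $w_0$ of the parabolic $\dot B_k$, whose length is well known to be $k^2$; combined with the normal-form length accounting $\ell(\pi)=\sum y_i$ from Definition \ref{Normal form of $B_{n}$}, where for $w_0$ one has $y_i=2i+1$ for $0\leq i\leq k-1$ and $y_i=0$ otherwise, giving $\ell=\sum_{i=0}^{k-1}(2i+1)=k^2$. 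Either route closes the proof.
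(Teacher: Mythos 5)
Your proposal is correct, but your main argument takes a genuinely different route from the paper's. The paper's proof is exactly the ``alternative'' you sketch at the end: it writes out the normal form of $\tau_k^{-k}$ as in Definition \ref{Normal form of $B_{n}$},
$$\tau_k^{-k}=s_0\cdot(s_1\cdot s_0\cdot s_1)\cdot(s_2\cdot s_1\cdot s_0\cdot s_1\cdot s_2)\cdots(s_{k-1}\cdots s_1\cdot s_0\cdot s_1\cdots s_{k-1}),$$
i.e.\ $y_i=2i+1$ for $0\leq i\leq k-1$, and reads off $\ell(\tau_k^{-k})=\sum_{i=0}^{k-1}(2i+1)=k^2$ directly from the normal-form length count. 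Your primary route instead identifies $\tau_k^{-k}$ via Remark \ref{tau-power} as the signed permutation $[-1;\,-2;\ldots;\,-k;\,k+1;\ldots;\,n]$ and applies the combinatorial length statistic $\ell(\pi)=\mathrm{inv}(\pi)+\sum_{\pi(i)<0}|\pi(i)|$ from \cite{BB}, obtaining $\binom{k}{2}+\binom{k+1}{2}=k^2$; the computation is right, and the convention worry you raise is harmless here since Coxeter length is intrinsic and $\tau_k^{-k}$ is an involution (indeed the longest element of the parabolic $\dot{B}_k$). What each buys: your statistic-based computation is self-contained modulo citing the $B_n$ length formula in the correct convention, while the paper's argument stays entirely inside the normal-form machinery it has already set up and reuses throughout Section \ref{cox-length}, and it exhibits an explicit reduced word. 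You are also right to flag the summand: ``$\sum_{i=0}^{k-1}2i-1$'' in the statement is a typo for $\sum_{i=0}^{k-1}(2i+1)$ (equivalently $\sum_{i=1}^{k}(2i-1)$), as the displayed blocks of lengths $1,3,\ldots,2k-1$ in the paper's own proof make clear.
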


\begin{proof}
By considering the normal form of $\tau_k^{-k}\in B_{n}$ as it is defined in Definition \ref{Normal form of $B_{n}$}:
$$\tau_{k}^{-k} = \prod_{i=0}^{k-1} \prod_{j=0}^{2i} s_{|i-j|} = s_{0} \cdot (s_{1}\cdot s_{0} \cdot s_{1}) \cdot (s_{2}\cdot s_{1} \cdot s_{0} \cdot s_{1} \cdot s_{2})  \cdots (s_{k-1}\cdots s_{1}\cdot s_{0} \cdot s_{1} \cdots s_{k-1}). $$
Hence
$$\ell(\tau_{k}^{-k}) = \sum_{i=0}^{k-1} 2i-1 = k^{2}$$
\end{proof}

\begin{theorem}\label{ell-sn}
Let $\pi$ be an element in the subgroup $\dot{S}_{n}$ of $B_{n}$, where by the generalized standard $OGS$ presentation of $\pi$ (as it is described in Theorem \ref{main-ogs-sn}):
$$\pi=\tau_{k_{1}}^{i_{k_{1}}} \cdot \tau_{k_{2}}^{i_{k_{2}}} \cdots \tau_{k_{m}}^{i_{k_{m}}}$$  such that
\begin{itemize}
    \item $ -k_{j} \leqslant i_{k_{j}} \leqslant k_{j}-1$, for every $1 \leqslant j \leqslant m$.
    \item $\sum_{j=1}^{m} i_{k_{j}}=0$.
    \item  $0\leqslant \sum_{j=r}^{m}i_{k_{j}}\leqslant  k_{r-1}$, for every $2\leq r\leq m$.
\end{itemize}

Then, $$\ell(\pi) = \sum_{j=1}^{m} k_{j} \cdot i_{k_{j}}.$$
 
\end{theorem}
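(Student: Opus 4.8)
The plan is to reduce the length computation for a general $\pi\in\dot S_n$ to the case of a standard $OGS$ elementary element of $S_n$, for which the Coxeter length is already known from Theorem~\ref{theorem-elementary} combined with Lemma~\ref{tau-t}, and then to control the error terms that arise when the reduction is applied. Concretely, Lemma~\ref{tau-t} converts the generalized standard $OGS$ presentation $\pi=\prod_{j=1}^m\tau_{k_j}^{i_{k_j}}$ into the standard $OGS$ presentation of $\pi$ as an element of $S_n$, with exponents $i'_{k_j}=i_{k_j}$ when $i_{k_j}\ge 0$ and $i'_{k_j}=k_j+i_{k_j}$ when $i_{k_j}<0$. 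Since the Coxeter length of $\pi$ is the same whether we compute it in $\dot S_n$ or in $S_n$ (the inclusion $\dot S_n\hookrightarrow B_n$ is that of a parabolic subgroup, so it preserves length), it suffices to express $\ell(\pi)$ in terms of the $i'_{k_j}$ and then substitute back.

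First I would record that, by Theorem~\ref{main-ogs-sn}, the negative exponents $i_{k_{r_1}},\dots,i_{k_{r_q}}$ (with $r_1=1$) are exactly the positions where a new standard $OGS$ elementary factor of $\pi$, viewed in $S_n$, begins, and that Theorem~\ref{factorization-} gives the standard $OGS$ elementary factorization $\pi=\pi^{(1)}\cdots\pi^{(q)}$ with $z(\pi)=q$ and $maj(\pi^{(\alpha)})=k_{r_\alpha}$. Applying the length formula of Theorem~\ref{theorem-factorization},
$$\ell(\pi)=\sum_{x=1}^{m}k_x\cdot i'_{k_x}-\sum_{\alpha=1}^{q}\bigl(maj(\pi^{(\alpha)})\bigr)^2=\sum_{x=1}^{m}k_x\cdot i'_{k_x}-\sum_{\alpha=1}^{q}k_{r_\alpha}^2.$$
The final step is then purely algebraic: split the first sum according to the sign of $i_{k_x}$. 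For indices with $i_{k_x}\ge 0$ the contribution is $k_x i_{k_x}$, matching the claimed answer directly; for the indices $x=r_\alpha$ with $i_{k_{r_\alpha}}<0$ we have $k_{r_\alpha}i'_{k_{r_\alpha}}=k_{r_\alpha}(k_{r_\alpha}+i_{k_{r_\alpha}})=k_{r_\alpha}^2+k_{r_\alpha}i_{k_{r_\alpha}}$, so the surplus $k_{r_\alpha}^2$ is precisely cancelled by the corresponding term of $\sum_\alpha k_{r_\alpha}^2$. What remains is $\sum_{x=1}^{m}k_x i_{k_x}$, which is the desired formula.

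The main obstacle I anticipate is not in the algebra but in the bookkeeping that licenses the use of Theorem~\ref{factorization-}: one must be sure that the factors $\pi^{(\alpha)}$ produced there are genuinely the standard $OGS$ elementary factors in the sense of Definition~\ref{canonical-factorization-def}, i.e.\ that the chaining inequalities $h^{(v-1)}_{m^{(v-1)}}\le maj(\pi^{(v)})\le h^{(v)}_1$ hold and that $z(\pi)$ is minimal. Theorem~\ref{factorization-} asserts exactly this, so the work is to quote it correctly and to verify that the set of negative-exponent indices $\{j: i_{k_j}<0\}$ has size $q=z(\pi)$; then the cancellation above is automatic. An alternative, more self-contained route — which I would mention as a fallback — is a direct induction on $q$: peel off the last elementary factor using the $u$–$v$ decomposition of Theorem~\ref{uvuv}, apply Lemma~\ref{tau-k-k} and Theorem~\ref{theorem-elementary} to the factor, and check that the length is additive across the factorization (which is the content of the first bullet of Theorem~\ref{theorem-factorization}); but routing everything through Theorems~\ref{factorization-} and~\ref{theorem-factorization} is the shorter path.
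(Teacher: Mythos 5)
Your argument is correct, but it is not the paper's route. The paper proves this theorem directly from the $B_n$ normal form of \cite{BB}: it writes $\pi$ as a product of blocks $s_{r_j}\cdots s_{r_j-\mu_j}$, converts each block via Lemma \ref{tau-i+i} into $\tau_{r_j}^{-(\mu_j+1)}\cdot\tau_{r_j+1}^{\mu_j+1}$, observes that $\ell(\pi)=\sum_j(\mu_j+1)=\sum_j\bigl(-(\mu_j+1)r_j+(\mu_j+1)(r_j+1)\bigr)$, and then uses uniqueness of the generalized standard $OGS$ presentation to see that merging adjacent $\tau$-factors preserves the weighted sum, giving $\sum_j k_j i_{k_j}$ by telescoping. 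You instead pass to $S_n$ via Lemma \ref{tau-t}, invoke Theorem \ref{factorization-} together with the length formula of Theorem \ref{theorem-factorization} from \cite{S1}, and cancel the $\bigl(maj(\pi^{(\alpha)})\bigr)^2=k_{r_\alpha}^2$ terms against the surplus $k_{r_\alpha}^2$ coming from $i'_{k_{r_\alpha}}=k_{r_\alpha}+i_{k_{r_\alpha}}$; the degenerate cases $i'_{k_j}=0$ contribute nothing, so the cancellation is complete and the identity $\ell(\pi)=\sum_j k_j i_{k_j}$ follows, with the parabolic-subgroup fact justifying that the $S_n$-length equals the $B_n$-length. Two small caveats: the statement of Theorem \ref{factorization-} only asserts $z(\pi)=\lvert\{j:i_{k_j}<0\}\rvert$, while the equality $maj(\pi^{(\alpha)})=k_{r_\alpha}$ that you need is established inside its proof (and restated later in Proposition \ref{des-bn-sn}), so you should cite that explicitly rather than attribute it to the theorem's statement; and your route leans on the heavier machinery of \cite{S1}, whereas the paper's computation is more self-contained and elementary. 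What your route buys is a conceptual explanation of why the quadratic correction terms $\bigl(maj\bigr)^2$ of the $S_n$ formula disappear in the generalized $OGS$ coordinates of $B_n$: the negative exponents absorb them, which is a nice complement to the paper's telescoping argument.
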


\begin{proof}

Let  $\pi$ be an element of $\dot{S}_{n}$, presented by the generalized standard $OGS$ (as it is described in Theorem \ref{main-ogs-sn}) as follow:
$$\pi=\tau_{k_{1}}^{i_{k_{1}}} \cdot \tau_{k_{2}}^{i_{k_{2}}} \cdots \tau_{k_{m}}^{i_{k_{m}}}$$  such that
\begin{itemize}
    \item $ -k_{j} \leqslant i_{k_{j}} \leqslant k_{j}-1$, for every $1 \leqslant j \leqslant m$.
    \item $\sum_{j=1}^{m} i_{k_{j}}=0$.
    \item  $0\leqslant \sum_{j=r}^{m}i_{k_{j}}\leqslant  k_{r-1}$, for every $2\leq r\leq m$.
\end{itemize}
The presentation of $\pi$ by the normal form (\cite{BB} Chapter 3.4) as follow: 
$$\pi = (s_{r_{1}} \cdot s_{r_{1}-1} \cdots s_{r_{1}-\mu_{1}} )\cdot (s_{r_{2}} \cdot s_{r_{2}-1} \cdots s_{r_{2}-\mu_{2}}) \cdots (s_{r_{z}} \cdot s_{r_{z}-1} \cdots s_{r_{z}-\mu_{z}}),$$ 
where $r_j<r_{j+1}$ for every $1\leq j\leq z-1$.
Therefore, \quad $\ell(\pi) = \sum_{j=1}^{z} \mu_{j} + z $.\\
Then, By Lemma \ref{tau-i+i},  $$ \pi = \tau_{r_{1}}^{-(\mu_{1}+1)} \cdot \tau_{r_{1}+1}^{\mu_{1}+1} \cdot \tau_{r_{2}}^{-(\mu_{2}+1)} \cdot \tau_{r_{2}+1}^{\mu_{2}+1} \cdots \tau_{r_{z}}^{-(\mu_{z}+1)}\cdot \tau_{r_{z}+1}^{\mu_{z}+1}.$$ 

Thus, $$\ell(\pi) = \sum_{j=1}^{z} (\mu_{j}+1) = \sum_{j=1}^{z} {-(\mu_{j}+1)\cdot r_{j} + (\mu_{j}+1) \cdot (r_{j}+1)}.$$

Since $r_j<r_{j+1}$, we conclude $r_{j}+1\leq r_{j+1}$.
Hence, we have either $r_j+1=r_{j+1}$  or $r_j+1<r_{j+1}$ for some $1\leq j\leq z-1$.
Therefore, in case $r_j+1=r_{j+1}$ for some $1\leq j\leq z-1$,
$$ \pi = \tau_{r_{1}}^{-(\mu_{1}+1)} \cdot \tau_{r_{1}+1}^{\mu_{1}+1} \cdot \tau_{r_{2}}^{-(\mu_{2}+1)} \cdot \tau_{r_{2}+1}^{\mu_{2}+1} \cdots \tau_{r_{z}}^{-(\mu_{z}+1)}\cdot \tau_{r_{z}+1}^{\mu_{z}+1}=$$
$$=\tau_{r_{1}}^{-(\mu_{1}+1)} \cdots \tau_{r_{j}+1}^{(\mu_{j}+1)-(\mu_{j+1}+1)} \cdots \tau_{r_{z}+1}^{\mu_{z}+1}=$$
$$=\tau_{k_1}^{i_{k_1}}\cdots \tau_{k_m}^{i_{k_m}},$$

Then, by the uniqueness of presentation by the generalized standard $OGS$ (Theorem \ref{ogs-bn}), we conclude:

$$\ell(\pi)= \sum_{j=1}^{z} (\mu_{j}+1)\cdot r_{j} + (\mu_{j}+1) \cdot (r_{j}+1)=\sum_{j=1}^m k_j\cdot i_{k_j}.$$

\end{proof}

 The next theorem demonstrates the connection of the generalized standard $OGS$ presentation of an  element $\pi$ of $B_{n}$ (as it is described in Theorem \ref{ogs-bn}) to the Coxeter length of  $\pi$.

\begin{theorem}
 Let $\pi$ be an element of $B_{n}$, Consider the presentation of $\pi$ in the form $u_{1} \cdot  v_{1} \cdot u_{2} \cdot v_{2} \cdots u_{r}$ for some $r$, as it is described in Theorem \ref{uvuv}.\\
Then, $$\ell(\pi) = \ell(u_{r}) + \ell(v_{r-1}) - \ell(u_{r-1}) - \ell(v_{r-1}) + \ell(u_{r-2}) + \ell(v_{r-2}) - \cdots + (-1)^{r-1} \cdot \ell(u_{1}).$$
\end{theorem}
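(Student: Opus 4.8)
The plan is to induct on $r$. For $r=1$ the presentation of Theorem \ref{uvuv} is simply $\pi=u_1\in\dot{S}_n$, and the asserted identity reduces to $\ell(\pi)=\ell(u_1)$, which is immediate. So let $r\geq 2$ and set $\mu=u_1\cdot v_1\cdot u_2\cdots v_{r-2}\cdot u_{r-1}$, so that $\pi=\mu\cdot v_{r-1}\cdot u_r$. The first observation is that $\mu$ lies in the parabolic subgroup $\dot{B}_{p_{r-1}}:=\langle s_0,s_1,\dots,s_{p_{r-1}-1}\rangle$ of $B_n$: indeed, by the index conditions of Theorem \ref{uvuv} each $v_j$ with $j\leq r-2$ equals $\tau_{p_j}^{p_j}$ with $p_j<p_{r-1}$ and so lies in $\langle s_0,\dots,s_{p_j-1}\rangle$, while each $u_j$ with $j\leq r-1$ has all of its generalized $OGS$ indices at most $p_{r-1}$, hence by Lemmas \ref{tau-1+1} and \ref{tau-i+i} (together with $u_j\in\dot{S}_n$) is a word in $s_1,\dots,s_{p_{r-1}-1}$. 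Moreover, by Remark \ref{tau-power}, $v_{r-1}=\tau_{p_{r-1}}^{p_{r-1}}=\tau_{p_{r-1}}^{-p_{r-1}}$ is the signed permutation $j\mapsto -j$ for $1\leq j\leq p_{r-1}$ (and the identity elsewhere), which is exactly the longest element $w_0$ of the finite parabolic $\dot{B}_{p_{r-1}}$, with $\ell(v_{r-1})=p_{r-1}^2$ by Lemma \ref{tau-k-k}. Since $\mu\in\dot{B}_{p_{r-1}}$, the standard identity $\ell(x\,w_0)=\ell(w_0)-\ell(x)$ for the longest element of a finite Coxeter group yields
$$\ell(\mu\cdot v_{r-1})=\ell(v_{r-1})-\ell(\mu).$$

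The heart of the proof is the claim that appending $u_r$ on the right is length-additive:
$$\ell(\pi)=\ell\big((\mu\cdot v_{r-1})\cdot u_r\big)=\ell(\mu\cdot v_{r-1})+\ell(u_r).$$
Because $\mu\cdot v_{r-1}\in\dot{B}_{p_{r-1}}$, each of its right descents lies in $\{s_0,\dots,s_{p_{r-1}-1}\}$; what must be shown is that no reduced word of $u_r$ can be appended to a reduced word of $\mu\cdot v_{r-1}$ with any cancellation, equivalently that the number of inversions and the sum of absolute values of the negative entries add up separately when one passes from $\mu\cdot v_{r-1}$ to $(\mu\cdot v_{r-1})\cdot u_r$. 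I would verify this in the permutation (window) model, using the type $B_n$ length formula $\ell(w)=\#\{(i,j)\mid 1\leq i<j\leq n,\ w(i)>w(j)\}+\sum_{w(i)<0}|w(i)|$ together with the facts that $\mu\cdot v_{r-1}$ is supported on the block $\{1,\dots,p_{r-1}\}$ and that the first generalized $OGS$ index of $u_r$ is at least $p_{r-1}$, so that $u_r$ rearranges the window compatibly with that block. This bookkeeping — tracking precisely which entries $u_r$ sends below, and which above, $p_{r-1}$, and checking that it neither destroys an existing inversion nor cancels a negative value already present — is the step I expect to be the main obstacle.

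Finally, combining the two displays gives $\ell(\pi)=\ell(u_r)+\ell(v_{r-1})-\ell(\mu)$. Now $\mu=u_1\cdot v_1\cdots v_{r-2}\cdot u_{r-1}$ is itself a presentation of exactly the form of Theorem \ref{uvuv}, but with $r-1$ levels (the constraint relating the last factor of $\mu$ to $p_{r-1}$ is simply dropped), so by the induction hypothesis $\ell(\mu)$ equals the corresponding alternating expression in $\ell(u_1),\dots,\ell(u_{r-1}),\ell(v_1),\dots,\ell(v_{r-2})$; substituting this into $\ell(\pi)=\ell(u_r)+\ell(v_{r-1})-\ell(\mu)$ flips every sign of the $(r-1)$-level expression and prepends the new terms $+\ell(u_r)+\ell(v_{r-1})$, which is precisely the $r$-level alternating formula. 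A short separate check handles the degenerate cases $u_{r-1}=1$ or $u_r=1$, in which some adjacent factors collapse; and one may at the end substitute $\ell(u_j)=\sum_x k_x\, i_{k_x}$ and $\ell(v_j)=p_j^2$, from Theorem \ref{ell-sn} and Lemma \ref{tau-k-k}, to obtain a fully explicit form.
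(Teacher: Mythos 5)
Your skeleton is the same as the paper's: induct, use that $v_{r-1}=\tau_{p_{r-1}}^{p_{r-1}}$ is the longest element of the parabolic subgroup $\dot{B}_{p_{r-1}}=\langle s_0,\ldots,s_{p_{r-1}-1}\rangle$ containing $\mu=u_1\cdot v_1\cdots u_{r-1}$, so that $\ell(\mu\cdot v_{r-1})=\ell(v_{r-1})-\ell(\mu)$ by the longest-element identity (\cite{BB}, Chapter 2.3), and then add $\ell(u_r)$; the paper builds the word up from the left while you peel from the right, an inessential difference, and your observation that the truncation $u_1\cdot v_1\cdots u_{r-1}$ is again a legitimate instance of Theorem \ref{uvuv} with $r-1$ levels is fine. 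The genuine gap is the step you yourself flag as ``the main obstacle'': the additivity
$$\ell\bigl((\mu\cdot v_{r-1})\cdot u_r\bigr)=\ell(\mu\cdot v_{r-1})+\ell(u_r)$$
is asserted, a strategy (inversion and negative-entry bookkeeping in the window model via $\ell(w)=\mathrm{inv}(w)+\sum_{w(i)<0}|w(i)|$) is announced, but the verification is never carried out. That identity is precisely the nontrivial content of the induction step, so as written the proof is incomplete.

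The paper closes this step with no window bookkeeping at all, using the normal form of \cite{BB}, Chapter 3.4: since $\mu\cdot v_{r-1}\in\dot{B}_{p_{r-1}}$, its normal form $\prod_{i=0}^{n-1}\prod_{j=0}^{y_i-1}s_{|i-j|}$ has $y_i=0$ for all $i\geq p_{r-1}$, while $u_r\in\dot{S}_n$ has (by the computation in the proof of Theorem \ref{ell-sn}) normal form $(s_{r_1}\cdots s_{r_1-\mu_1})(s_{r_2}\cdots s_{r_2-\mu_2})\cdots(s_{r_z}\cdots s_{r_z-\mu_z})$ with $r_1<r_2<\cdots<r_z$ and $r_1=k_1\geq p_{r-1}$, the last inequality being exactly the index condition of Theorem \ref{uvuv}. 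Hence the concatenation of the two normal forms is again a word in normal form, and by uniqueness of the normal form it is the normal form of the product, so the lengths add. If you insist on the window-model route you must actually prove that passing from $\mu\cdot v_{r-1}$ to $(\mu\cdot v_{r-1})\cdot u_r$ increases $\mathrm{inv}+\sum_{w(i)<0}|w(i)|$ by exactly $\ell(u_r)$, which in effect reproves this normal-form fact by hand; the cleaner fix is to replace that paragraph by the concatenation argument. (Minor points: the displayed formula in the statement contains a typo, the second occurrence of $-\ell(v_{r-1})$ should be $-\ell(v_{r-2})$ and so on down the alternating sum, and your expansion produces the intended formula; the degenerate cases $u_j=1$ cause no trouble since $\ell(1)=0$.)
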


\begin{proof}
First, consider the case:
$$\pi = u\cdot v=  \prod_{j=1}^{m} \tau_{k_{j}}^{i_{k_j}}\cdot \tau_{p}^{p}.$$
where $p\geq k_m$.
For $1\leq j<n$, denote by $\dot{B}_j$ the parabolic subgroup of $B_n$ which is generated by $s_0, s_1, \ldots s_{j-1}$. (Notice $\dot{B}_j$ is isomorphic to $B_j$.)
By Theorem \ref{ogs-bn}, $u$ is in the parabolic subgroup $\dot{B}_{k_m}$ of $B_n$, and by \cite{BB} Chapter 3.4, the normal form of $v=\tau_p^p$ as follow
$$v=\tau_p^p =  s_{0}\cdot (s_{1}\cdot s_{0}\cdot s_{1}) \cdots (s_{p-1}\cdot s_{p-2} \cdots s_{0} \cdot s_1\cdots s_{p-1})  $$

Since, $p\geq k_m$, both elements $u$ and $v$ can be considered as elements of $\dot{B}_p$.

By \cite{BB} Chapter 3.4  $v=\tau_p^p$ is the longest element in the parabolic subgroup $\dot{B}_{p}$ of $B_n$. Therefore, by \cite{BB} Chapter 2.3,
$$\ell(\pi) = \ell(v) - \ell(u).$$

Now, assume the following by induction on $m$:

If
$$ \pi = u_{1} \cdot v_{1} \cdots u_{m-1} \cdot v_{m-1} \cdot u_{m} \cdot v_{m},$$
then
$$\ell(\pi) = \sum_{i=1}^{m} (-1)^{m-i} \cdot (\ell(v_{i}) - \ell(u_{i})).$$

Now, consider :
$$\pi_{1} =  \pi \cdot u_{m+1}.$$
where $\pi=u_{1} \cdot v_{1} \cdots u_{m-1} \cdot v_{m-1} \cdot u_{m} \cdot v_{m}$. 
Then, $v_m=\tau_q^{q}$ for some $q<n$, and by Theorem \ref{uvuv} $\pi$ is in the parabolic subgroup $\dot{B}_q$ of $B_n$. Notice, that the generalized standard $OGS$ presentation of $u_{m+1}$ is  $u_{m+1}=\prod_{j=1}^m\tau_{k_j}^{i_{k_j}}$, where by Theorem \ref{uvuv}, $k_1\geq q$, and then by \cite{BB} Chapter 3.4, the normal form of $u_{m+1}$ as follow:
$$u_{m+1} = (s_{r_{1}} \cdot s_{r_{1}-1} \cdots s_{r_{1}-\mu_{1}} )\cdot (s_{r_{2}} \cdot s_{r_{2}-1} \cdots s_{r_{2}-\mu_{2}}) \cdots (s_{r_{z}} \cdot s_{r_{z}-1} \cdots s_{r_{z}-\mu_{z}})$$ 
where $r_j<r_{j+1}$ for every $1\leq j\leq z-1$.
By the proof of Theorem \ref{ell-sn}, $r_1=k_1$, and since $k_1\geq q$, we have that $r_1\geq q$.
Since, $\pi\in \dot{B}_q$, the normal form of $\pi\cdot u_{m+1}$ by \cite{BB} Chapter 3.4 as follow:

$$norm(\pi\cdot u_{m+1})=$$ $$=norm(\pi)\cdot (s_{r_{1}} \cdot s_{r_{1}-1} \cdots s_{r_{1}-\mu_{1}} )\cdot (s_{r_{2}} \cdot s_{r_{2}-1} \cdots s_{r_{2}-\mu_{2}}) \cdots (s_{r_{z}} \cdot s_{r_{z}-1} \cdots s_{r_{z}-\mu_{z}})=$$ $$=norm(\pi)\cdot norm(u_{m+1}).$$

Hence, $$\ell(\pi\cdot u_{m+1})=\ell(\pi)+\ell(u_{m+1}).$$

Now, consider:
$$\pi_{2} = \pi_{1}\cdot v_{m+1}=\pi \cdot (u_{m+1} \cdot v_{m+1}).$$

By the same argument as in the calculation of the length of $ (u\cdot v)$, we get that  $v_{m+1}$ is the longest element in a parabolic subgroup of $B_n$, which contains the elements $u_1, v_1, \ldots, u_m, v_m, u_{m+1}$. 
Hence, by \cite{BB} Chapter 2.3,

$$\ell(\pi_{2}) = \ell(v_{m+1})-\ell(\pi_1) =  \ell(v_{m+1})-[\ell(u_{m+1})+\ell(\pi)] =(\sum_{i=1}^{m+1} (-1)^{m+1-i}\cdot (\ell(v_{i}) - \ell(u_{i})).$$
 
 Hence, 
 
 $$\ell(\pi) = \sum_{i=1}^{m+1} (-1)^{m+1-i} (\ell(v_{i}) - \ell(u_{i})).$$
\end{proof}

\begin{example}
Let $\pi$ be: $$\pi = \tau_{2} \cdot \tau_{3} \cdot \tau_{4}^{3} \cdot \tau_{5}^{2}$$
$$\pi = \tau_{1} \cdot (\tau_{1}^{-1} \cdot \tau_{2} ) \cdot \tau_{2}^{2} \cdot (\tau_{2}^{-2} \cdot \tau_{3} \cdot \tau_{4}) \cdot \tau_{4}^{4} \cdot (\tau_{4}^{-2} \cdot \tau_{5}^{2}).$$
Hence,

 $$u_1=1\quad\quad v_1=\tau_{1}=\tau_{1}^{-1}\quad\quad u_2=\tau_{1}^{-1} \cdot \tau_{2}\quad \quad v_2=\tau_{2}^2=\tau_{2}^{-2}$$ 
  $$u_3=\tau_{2}^{-2} \cdot \tau_{3} \cdot \tau_{4}\quad\quad
  v_3=\tau_{4}^4=\tau_{4}^{-4}\quad\quad
  u_4=\tau_{4}^{-2} \cdot \tau_{5}^{2}.$$

Then,
$$\ell(\pi) = \ell(\tau_{4}^{-2} \cdot \tau_{5}^{2}) + \ell(\tau_{4}^{-4}) - \ell(\tau_{2}^{-2} \cdot \tau_{3} \cdot \tau_{4}) -\ell(\tau_{2}^{-2})+ \ell(\tau_{1}^{-1} \cdot \tau_{2})+\ell(\tau_{1}^{-1}).$$
$$= 4\cdot (-2) + 5\cdot 2 + 4\cdot 4 - 2\cdot (-2) - 3 - 4 - 2\cdot 2 + 1 \cdot (-1) + 2 + 1 = 13.$$

Indeed, the presentation of $\pi$ in the normal form contains $13$ Coxeter generators as follow

$$\text{norm}(\pi)=s_0\cdot s_1\cdot (s_2\cdot s_1\cdot s_0)\cdot (s_3\cdot s_2\cdot s_1\cdot s_0\cdot s_1\cdot s_2)\cdot (s_4\cdot s_3).$$

\end{example}

\section{The descent set of elements of $B_{n}$.}\label{descent-bn}
\quad

In this subsection, we characterize the descent set of the elements in the Coxeter group $B_{n}$ by using the generalized standard $OGS$ presentation for the elements of $B_n$

\begin{proposition}\label{des-bn-sn}
 Let $\pi$ be an element of $\dot{S}_{n}$ in $B_{n}$ which is expressed by the generalized standard OGS presentation (i.e., $\pi = \tau_{k_{1}}^{i_{k_{1}}} \cdot \tau_{k_{2}}^{i_{k_{2}}} \cdots \tau_{k_{m}}^{i_{k_{m}}}$, where by Theorem \ref{main-ogs-sn}, $\sum_{j=1}^{m} i_{k_j}=0$ , and $k_j\leqslant \sum_{j=1}^{r} i_{k_j} \leqslant 0$ $\quad$, $r < m.$)\\

Then $des(\pi)$, the set of the locations of the descents of $\pi$ satisfies the following property:

$$s_{k_{j}} \in des(\pi) \Longleftrightarrow  i_{k_{j}} < 0$$
\end{proposition}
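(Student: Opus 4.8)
The plan is to reduce the statement to the description of the descent set of $S_n$ via the standard $OGS$ elementary factorization (Theorem \ref{theorem-factorization}), transported through the results of Section \ref{parabolic-subgroup}.

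First I would check that for $\pi\in\dot S_n$ the left descent set of $\pi$ as an element of $B_n$ coincides with the left descent set of $\pi$ regarded as an element of $S_n$, and that neither contains $0$. Since $\pi\in\dot S_n$ is an honest (unsigned) permutation, $\pi(1)\geq 1 > 0 = \pi(0)$, so $0\notin des(\pi)$; and for $1\leq i\leq n-1$ the generator $s_i$ acts on $\{1,\dots,n\}$ exactly as the transposition $(i,i+1)$ acts in $S_n$, so the criterion $s_i\in des(\pi)\iff\pi(i)>\pi(i+1)$ is literally the same statement in both groups (equivalently, $\dot S_n$ is a standard parabolic subgroup, so $\ell_{B_n}$ restricted to it is the Coxeter length of $S_n$, by \cite{BB} Chapter 2.3). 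Hence it suffices to prove that $k_j\in des_{S_n}(\pi)$ if and only if $i_{k_j}<0$.

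Next I would invoke the factorization machinery. Let $q=|\{1\leq j\leq m: i_{k_j}<0\}|$ and let $r_1<\dots<r_q$ be the indices with $i_{k_{r_\alpha}}<0$ (recall $r_1=1$ since $i_{k_1}<0$ by Theorem \ref{main-ogs-sn}). By Theorem \ref{factorization-} we have $z(\pi)=q$, and the proof of that theorem exhibits the (unique) standard $OGS$ elementary factorization $\pi=\pi^{(1)}\cdots\pi^{(q)}$ whose factors satisfy $maj(\pi^{(\alpha)})=k_{r_\alpha}$ for every $1\leq\alpha\leq q$ (this is obtained there by passing to the standard $OGS$ of $S_n$ via Lemma \ref{tau-t}). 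Applying Theorem \ref{theorem-factorization} to $\pi$ as an element of $S_n$ then gives
$$des_{S_n}(\pi)=\{maj(\pi^{(\alpha)}) : 1\leq\alpha\leq q\}=\{k_{r_1},\dots,k_{r_q}\}=\{k_j : 1\leq j\leq m,\ i_{k_j}<0\}.$$
Since the indices $k_1<k_2<\dots<k_m$ are pairwise distinct, this says precisely that $k_j\in des_{S_n}(\pi)$ if and only if $i_{k_j}<0$, which combined with the first paragraph finishes the proof.

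The two ingredients that need care are the identification $maj(\pi^{(\alpha)})=k_{r_\alpha}$ (already carried out inside the proof of Theorem \ref{factorization-}, so I would only need to point to it) and the compatibility of the $B_n$- and $S_n$-descent sets on the parabolic subgroup $\dot S_n$. I expect the latter to be the only genuinely delicate point: one must be sure that passing to the bigger group neither creates a descent at position $0$ nor changes the length function on $\dot S_n$, and both facts rest on $\pi$ being unsigned together with the standard theory of parabolic subgroups of Coxeter groups. Everything else is bookkeeping with the indices $r_\alpha$.
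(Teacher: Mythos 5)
Your proposal is correct and follows essentially the same route as the paper's own proof: both use Theorem \ref{factorization-} to produce the standard $OGS$ elementary factorization whose factors satisfy $maj\left(\pi^{(\alpha)}\right)=k_{r_{\alpha}}$, and then apply Theorem \ref{theorem-factorization} to read off $des(\pi)$ as $\{k_{j} : i_{k_{j}}<0\}$. Your explicit verification that for an unsigned permutation the $B_n$-descent set agrees with the $S_n$-descent set and cannot contain $0$ is a point the paper leaves implicit, but it is a refinement of the same argument rather than a different approach.
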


\begin{proof}
Consider  $$\pi = \tau_{k_{1}}^{i_{k_{1}}} \cdot \tau_{k_{2}}^{i_{k_{2}}} \cdots \tau_{k_{m}}^{i_{k_{m}}} , \quad \text{where} \quad \sum_{j=1}^{m} i_{k_{j}}=0.$$
Denote by $r_1, \ldots, r_{q}$, all the indices $j$ such that $i_{k_{j}}<0$.  Since by Theorem \ref{main-ogs-sn},  $i_{k_1}<0$ for every element of $\dot{S}_n$, we have $r_1=1$.\\
 
 Now, for every $1\leq \alpha\leq q$,   let define $i^{\prime}_{\alpha}$ as it was defined in Theorem \ref{factorization-},
by the following recursive method:
\begin{itemize}
    \item Let define $i^{\prime}_{q}$ to be $$i^{\prime}_{q}=i_{k_m};$$
    \item starting with $\alpha=q-1$, let define $i^{\prime}_{q-1}$ to be
    $$i^{\prime}_{q-1}=i_{r_q}+\sum_{j=r_{q}+1}^{m-1} i_{k_{j}}+i^{\prime}_{q};$$
    \item Then for every $1\leq \alpha\leq q-2$,  let define $i^{\prime}_{\alpha}$ to be
    $$i^{\prime}_{\alpha}=i_{r_{\alpha+1}}+\sum_{j=r_{\alpha+1}+1}^{r_{\alpha+2}-1} i_{k_{j}}+i^{\prime}_{\alpha+1}.$$
\end{itemize}

Then by Theorem \ref{factorization-}, 

$$\pi = (\tau_{k_{r_1}}^{-\sum_{j=2}^{r_2-1} i_{k_{j}}-i^{\prime}_{1}} \cdot \tau_{k_{2}}^{i_{k_{2}}}\cdots  \tau_{k_{r_{2}-1}}^ {i_{k_{r_{2}-1}}}\cdot\tau_{k_{r_{2}}}^{i^{\prime}_{1}}) \cdot (\tau_{k_{r_{2}}}^{-\sum_{j=r_{2}+1}^{r_3-1} i_{k_{j}}-i^{\prime}_{2}} \cdot \tau_{k_{r_{2}+1}}^{i_{k_{r_{2}+1}}}  \cdots \tau_{k_{r_{3}-1}}^{i_{k_{r_{3}-1}}}\cdot \tau_{k_{r_{3}}}^{i^{\prime}_{2}})\cdots$$ $$\cdots (\tau_{k_{r_{q}}}^{-\sum_{j=r_{q}+1}^{m-1} i_{k_{j}}-i^{\prime}_{q}} \cdot \tau_{k_{r_{q}+1}}^{i_{k_{r_{q}+1}}}  \cdots \tau_{k_{m-1}}^{i_{k_{m-1}}} \tau_{k_{m}}^{i^{\prime}_{q}}).$$

Then, by Theorem \ref{factorization-} the following properties hold:
\begin{itemize}
    \item By considering $\pi$ as an element of $S_n$, the subword
    $$\pi_{\alpha}=\tau_{k_{r_{\alpha}}}^{-\sum_{j=r_{\alpha}+1}^{r_{\alpha+1}-1} i_{k_{j}}-i^{\prime}_{\alpha}} \cdot \tau_{k_{r_{\alpha}+1}}^{i_{k_{r_{\alpha}+1}}}  \cdots \tau_{k_{r_{\alpha+1}-1}}^{i_{k_{r_{\alpha+1}-1}}}\cdot \tau_{k_{r_{\alpha+1}}}^{i^{\prime}_{\alpha}}$$ is the standard $OGS$ elementary factor $\pi^{(\alpha)}$ of $\pi$ for every $\alpha$ such that  $1\leq \alpha\leq q$, where $maj(\pi^{(\alpha)})=k_{r_{\alpha}}$, and $z(\pi)=q$;
    \item $i_{k_j}>0$ for every $2\leq j\leq m$ such that $j\neq r_{\alpha}$ for $1\leq \alpha\leq q$.
\end{itemize}
Hence, by Theorem \ref{theorem-factorization}, 
$$des\left(\pi\right)=\bigcup_{\alpha=1}^{z(\pi)}des\left(\pi^{(\alpha)}\right)=\{maj\left(\pi^{(\alpha)}\right)~|~1\leq \alpha\leq z(\pi)\};$$
Hence,  for $1\leq j\leq k_m$, ~$s_j\in des(\pi)$ if and only if $j=k_{r_{\alpha}}$ for $1\leq \alpha\leq q$. Since by definition of $r_{\alpha}$,  ~$j=k_{r_{\alpha}}$ if and only if $i_j<0$, we get that $s_j\in des(\pi)$ if and only if $i_j<0$.
\end{proof}
\begin{example}
 Let $\pi=\tau_{3}^{-2} \cdot \tau_{4}^{-1} \cdot \tau_{5}^{3}$ .\\

Then, the following holds:
\begin{itemize}
    \item $k_1=3, \quad k_2=4, \quad k_3=5$;
    \item $i_{k_{1}} =-2, \quad i_{k_2}=-1, \quad i_{k_3}=3$;
    \item $i_{k_1}=-2<0,\quad i_{k_1}+i_{k_2}=-1\leq 0,\quad i_{k_1}+i_{k_2}+i_{k_3}=0$.
    \end{itemize}

Hence, by Theorem \ref{main-ogs-sn} $\pi\in \dot{S}_5$.
\\

The permutation presentation of $\pi$ is as follows:

$$\pi=\tau_{3}^{-2} \cdot \tau_{4}^{-1} \cdot \tau_{5}^{3}$$
$$=
[3; ~-1; ~-2; ~4; ~5] 
\cdot 
[2; ~3; ~4; ~-1; ~5]
\cdot
[-3; ~-4; ~-5; ~1; ~2]
$$
$$=
[1; ~4; ~5; ~3; ~2] 
$$

Then we have,
 $$\pi(1)<\pi(2)<\pi(3),\quad\quad \pi(3)>\pi(4)>\pi(5).$$

Hence, 
$$\text{des}(\pi)=\{3, 4\}.$$

\end{example}

\begin{proposition}
Let $\pi$ be the element $\tau_{k}^{-k}$ , Then: $$des(\tau_{k}^{-k}) = \{0,1,2,\cdots ,k-1\}$$
\end{proposition}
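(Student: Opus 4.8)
The plan is to compute the descent set directly from the signed‑permutation presentation of $\tau_k^{-k}$ together with the combinatorial description of $des$ recalled in Section \ref{gen-OGS_B-n}, namely
$$des(\pi)=\{0\leqslant i\leqslant n-1\mid \pi(i)>\pi(i+1)\},\qquad \pi(0):=0.$$
First I would invoke the $i_k=-k$ case of Remark \ref{tau-power}, which gives $\tau_k^{-k}(j)=-j$ for $1\leqslant j\leqslant k$ and $\tau_k^{-k}(j)=j$ for $k+1\leqslant j\leqslant n$; equivalently, in window notation $\tau_k^{-k}=[-1;\ -2;\ \ldots;\ -k;\ k+1;\ \ldots;\ n]$.

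Next I would test membership of each $i$ with $0\leqslant i\leqslant n-1$ in $des(\tau_k^{-k})$ by checking the inequality $\tau_k^{-k}(i)>\tau_k^{-k}(i+1)$, splitting into the ranges $i=0$, $1\leqslant i\leqslant k-1$, $i=k$, and $k+1\leqslant i\leqslant n-1$. For $i=0$ we have $\tau_k^{-k}(0)=0>-1=\tau_k^{-k}(1)$, so $0$ is a descent. For $1\leqslant i\leqslant k-1$ we have $\tau_k^{-k}(i)=-i>-(i+1)=\tau_k^{-k}(i+1)$, so each such $i$ is a descent. For $i=k$ we have $\tau_k^{-k}(k)=-k<k+1=\tau_k^{-k}(k+1)$ (this range being vacuous exactly when $k=n$), so $k$ is not a descent. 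For $k+1\leqslant i\leqslant n-1$ we have $\tau_k^{-k}(i)=i<i+1=\tau_k^{-k}(i+1)$, so no such $i$ is a descent. Collecting the cases yields $des(\tau_k^{-k})=\{0,1,\ldots,k-1\}$.

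There is essentially no obstacle here; the only point requiring a word of care is the boundary behaviour when $k=n$, where the last two ranges are empty and the statement still reads $des(\tau_n^{-n})=\{0,1,\ldots,n-1\}$, consistent with $\tau_n^{-n}=[-1;\ -2;\ \ldots;\ -n]$. As an alternative one could argue via Lemma \ref{tau-k-k}: the normal form there exhibits $\tau_k^{-k}$ as the longest element of the parabolic subgroup $\dot{B}_k$ generated by $s_0,s_1,\ldots,s_{k-1}$, and the longest element of a standard parabolic subgroup $W_J$ has left descent set exactly $J$; since $s_j$ for $j\geqslant k$ fixes the positions $1,\ldots,k$ and merely permutes the fixed points $k+1,\ldots,n$, it contributes no descent, giving the same conclusion. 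I would present the direct permutation computation as the main argument and mention the parabolic viewpoint only as a remark.
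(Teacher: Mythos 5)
Your proposal is correct and follows essentially the same route as the paper: write $\tau_k^{-k}$ in its signed-permutation (window) form $[-1;\,-2;\,\ldots;\,-k;\,k+1;\,\ldots;\,n]$ and read off the descents directly from the definition $des(\pi)=\{0\leq i\leq n-1\mid \pi(i)>\pi(i+1)\}$ with $\pi(0)=0$. You merely spell out the case analysis (and the $k=n$ boundary, plus the optional longest-element remark) more explicitly than the paper does.
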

\begin{proof}
Consider the permutation presentation of $\pi$:
$$\pi =
[-1; ~-2; \ldots; ~-k; ~k+1; ~k+2; \ldots; ~n]
$$
Then,by definition $2.0.1.$
$$des(\tau_{k}^{-k}) = \{0,1,2,\cdots ,k-1\}$$
\end{proof}

\begin{theorem}
Let $\pi$ be the element $v\cdot u = \tau_{i_{1}}^{-i_{1}} \cdot (\tau_{i_{2}}^{k_{2}} \cdots \tau_{i_{m}}^{k_{m}}).$ where $v$ is the longest element of the parabolic subgroup which is generated by the Coxeter elements $\{s_{0},s_{1} ,\cdots , s_{i_{1}-1}\}$, and $u$ is an element of $\dot{S}_{n}$, presented by the generalized standard $OGS$, where $i_2\geq i_1$. Then the descent set of $\pi$ satisfies the following:
\begin{itemize}
    \item $j \in des(v\cdot u) \quad , 0 \leqslant j < i_{1} \Longleftrightarrow j \notin des (u).$
   \item $j \in des(v\cdot u) \quad , j > i_{1} \Longleftrightarrow j \in des(u).$
   \item $i_1\notin des(v\cdot u)$.
 \end{itemize}
\end{theorem}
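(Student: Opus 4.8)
The plan is to reduce everything to the signed-permutation presentation of $B_n$ and read off the left descents of $\pi=v\cdot u$ directly from its one-line notation. First I would record the two factors as signed permutations. By Remark~\ref{tau-power} (the case $i_k=-k$), the element $v=\tau_{i_1}^{-i_1}$ is the sign change on the first $i_1$ coordinates, i.e. $v(j)=-j$ for $1\le j\le i_1$ and $v(j)=j$ for $i_1<j\le n$; it is the longest element of the parabolic subgroup $\dot{B}_{i_1}=\langle s_0,s_1,\ldots,s_{i_1-1}\rangle$. Since $u\in\dot{S}_n$, its one-line notation $[u(1);~u(2);\ldots;~u(n)]$ is an unsigned permutation of $\{1,\ldots,n\}$, so $u(j)>0$ for every $j$. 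With the left-to-right convention $(v\cdot u)(j)=u(v(j))$ this yields
$$
\pi(j)=\begin{cases}-u(j)&1\le j\le i_1,\\ u(j)& i_1<j\le n,\end{cases}
$$
so $\pi$ is obtained from $u$ by negating its first $i_1$ one-line entries.

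Next I would compute $des(\pi)=\{0\le j\le n-1\mid \pi(j)>\pi(j+1)\}$ (with $\pi(0)=0$) by splitting into the three ranges $0\le j<i_1$, $j=i_1$, and $i_1<j\le n-1$. For $1\le j<i_1$ we have $\pi(j)>\pi(j+1)\iff -u(j)>-u(j+1)\iff u(j)<u(j+1)$, which is exactly the statement $j\notin des(u)$; and for $j=0$, $\pi(0)=0>-u(1)=\pi(1)$ holds always (since $u(1)>0$), while $0\notin des(u)$ always because $u\in\dot{S}_n$ — so in both subcases $j\in des(\pi)\iff j\notin des(u)$, which is the first bullet. For $i_1<j\le n-1$ both $\pi(j)=u(j)$ and $\pi(j+1)=u(j+1)$, hence $j\in des(\pi)\iff u(j)>u(j+1)\iff j\in des(u)$, which is the second bullet. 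Finally, if $i_1\le n-1$ then $\pi(i_1)=-u(i_1)<0<u(i_1+1)=\pi(i_1+1)$, so $i_1\notin des(\pi)$, the third bullet; and if $i_1=n$ the hypothesis $i_2\ge i_1$ forces $u=1$ and $\pi=v$ is the longest element of $B_n$, for which all three assertions are immediate.

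There is no serious obstacle here — the computation above is essentially the whole proof. The two points that deserve a sentence of care are the boundary index $j=0$ (where one must use $u\in\dot{S}_n$ to conclude $0\notin des(u)$, since $0$ is never a left descent of an element of the parabolic $\dot{S}_n$) and the degenerate case $i_1=n$. I would also note that the hypothesis $i_2\ge i_1$ is not actually used in the descent computation; it only ensures that $v\cdot u$ is genuinely the two-block segment of the normal form of Theorem~\ref{uvuv} under discussion, so that $v$ is the longest element of $\dot{B}_{i_1}$ and $u$ begins at an index $\ge i_1$. If a more Coxeter-theoretic argument were preferred, one could instead combine the fact that $v$ is the longest element of $\dot{B}_{i_1}$ with the length formula of Theorem~\ref{ell-sn} to check $\ell(s_j\pi)<\ell(\pi)$ exactly in the claimed cases, but the permutation argument is the shortest.
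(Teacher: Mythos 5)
Your proposal is correct and follows essentially the same route as the paper: write $\pi$ in the signed-permutation presentation, observe that $\pi(j)=-u(j)$ for $1\le j\le i_1$ and $\pi(j)=u(j)$ for $j>i_1$, and split the descent check into the three ranges $j<i_1$, $j=i_1$, $j>i_1$. Your explicit handling of the boundary index $j=0$ (using $u\in\dot{S}_n$ so that $0\notin des(u)$) and of the degenerate case $i_1=n$ is a small refinement the paper leaves implicit, but the argument is the same.
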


\begin{proof}

 Consider
$$\pi = \tau_{i_{1}}^{-i_{1}} \cdot (\tau_{i_{2}}^{k_{2}} \cdots \tau_{i_{m}}^{k_{m}}).$$
$$ v = \tau_{i_{1}}^{-i_{1}}.$$
$$u = \tau_{i_{2}}^{k_{2}} \cdots \tau_{i_{m}}^{k_{m}} \quad , u \in S_{i_{m}}.$$
Then by considering the permutation presentation of the element $vu$:
\begin{itemize}
    \item  $[v\cdot u](j) = -u(j) \quad 1\leqslant j \leqslant i_{1}$
    \item $[v\cdot u](j) = u(j) \quad i_{1}+1\leqslant j \leqslant i_{m}.$
\end{itemize}
Hence the following holds:
\begin{itemize}
    \item For $1\leq j\leq i_{1}-1$: $$j\in des(v\cdot u)\Longleftrightarrow [v\cdot u](j)>[v\cdot u](j+1)\Longleftrightarrow u(j)<u(j+1) \quad  (i.e.,\quad j\notin des(u)).$$
    \item For $i_1+1\leq j\leq n-1$: $$j\in des(v\cdot u)\Longleftrightarrow [v\cdot u](j)>[v\cdot u](j+1)\Longleftrightarrow u(j)>u(j+1) \quad  (i.e., \quad j\in des(u)).$$
    \item For $j=i_1$: $$[v\cdot u](i_1)=-u(i_1), \quad [v\cdot u](i_1+1)=u(i_1+1) \Longrightarrow [v\cdot u](i_1)<[v\cdot u](i_1+1) \Longrightarrow i_1\notin des(v\cdot u).$$
\end{itemize}

\end{proof}

\begin{theorem}
 Let $\pi$ be the element $u\cdot v =(\tau_{i_{1}}^{k_{1}} \cdot \tau_{i_{2}}^{k_{2}} \cdots \tau_{i_{m}}^{k_{m}}) \cdot \tau_{i_{m+1}}^{i_{m+1}}$ where $v$ is the longest element of the parabolic subgroup generated by the Coxeter elements $\{s_{0},s_{1} ,\cdots , s_{i_{m+1}-1}\}$ ,and $u$ is element of $\dot{S}_{n}$ presented by the generalized standard $OGS$, and $i_m\leq i_{m+1}$, then the descent set of $\pi$ satisfies the following property:
 \begin{itemize}
     \item  $j \in des(u\cdot v) \quad , 0 \leqslant j <i_{m} \Longleftrightarrow j \notin des(u).$
    \item $j \in des(u\cdot v) \quad , i_{m}\leqslant j <i_{m+1}.$
 \end{itemize}
\end{theorem}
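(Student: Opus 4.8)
The plan is to reduce the whole statement to the one-line permutation description of $u\cdot v$ and then read off the descents from the definition $j\in des(\sigma)\Longleftrightarrow \sigma(j)>\sigma(j+1)$ (with the convention $\sigma(0)=0$), exactly in the spirit of the proof of the preceding theorem about $v\cdot u$.

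First I would record the permutation of $v$. Since $\tau_{i_{m+1}}$ has order $2i_{m+1}$ we have $v=\tau_{i_{m+1}}^{i_{m+1}}=\tau_{i_{m+1}}^{-i_{m+1}}$, so by Remark \ref{tau-power} (the case $i_k=-k$),
$$v(\ell)=\begin{cases}-\ell & 1\le \ell\le i_{m+1},\\ \ell & i_{m+1}<\ell\le n.\end{cases}$$
Next, $u\in\dot{S}_n$ is a genuine (unsigned) permutation, and because its generalized standard $OGS$ presentation $\tau_{i_1}^{k_1}\cdots\tau_{i_m}^{k_m}$ involves only $\tau$'s of index at most $i_m$, the element $u$ is supported on $\{1,\dots,i_m\}$; that is, $u(j)=j$ for $i_m<j\le n$ and $u(j)\in\{1,\dots,i_m\}$ for $1\le j\le i_m$ (here Theorem \ref{main-ogs-sn} and the explicit action in Remark \ref{tau-power} are used). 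Using the left-to-right multiplication convention $[u\cdot v](j)=v(u(j))$, and noting that the hypothesis $i_m\le i_{m+1}$ forces $1\le u(j)\le i_m\le i_{m+1}$ whenever $1\le j\le i_m$, I obtain
$$[u\cdot v](j)=\begin{cases}-u(j) & 1\le j\le i_m,\\ -j & i_m<j\le i_{m+1},\\ j & i_{m+1}<j\le n,\end{cases}$$
together with $[u\cdot v](0)=0$.

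With this formula the two assertions reduce to a short case check. For $1\le j\le i_m-1$ both neighbouring values are $-u(j)$ and $-u(j+1)$, so $j\in des(u\cdot v)\Longleftrightarrow u(j)<u(j+1)\Longleftrightarrow j\notin des(u)$; for $j=0$ we have $[u\cdot v](0)=0>-u(1)=[u\cdot v](1)$, hence $0\in des(u\cdot v)$, while $0\notin des(u)$ since $u(0)=0<u(1)$, so the equivalence again holds. This gives the first bullet. For the second bullet, when $i_m<i_{m+1}$ and $j=i_m$ we have $[u\cdot v](i_m)=-u(i_m)$ and $[u\cdot v](i_m+1)=-(i_m+1)$; since $u(i_m)\le i_m<i_m+1$ it follows that $i_m\in des(u\cdot v)$; and for $i_m<j<i_{m+1}$ we have $[u\cdot v](j)=-j>-(j+1)=[u\cdot v](j+1)$, so $j\in des(u\cdot v)$.

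The only point that is not pure bookkeeping is the claim that $u$ is supported on $\{1,\dots,i_m\}$, which makes the middle and right branches of the formula for $[u\cdot v](j)$ correct; this rests on Theorem \ref{main-ogs-sn} together with the explicit action of the $\tau_k$, and it is also the place where the hypothesis $i_m\le i_{m+1}$ is genuinely needed, so that $v$ negates every value in the image of $\{1,\dots,i_m\}$ under $u$. Once that is in place everything follows by unwinding the descent definition, the only cases requiring any care being the boundary indices $j=0$ and $j=i_m$.
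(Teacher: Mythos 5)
Your proposal is correct and follows essentially the same route as the paper's own proof: compute the one-line (signed) permutation of $u\cdot v$, namely $[u\cdot v](j)=-u(j)$ for $1\le j\le i_m$, $-j$ for $i_m<j\le i_{m+1}$, and $j$ beyond, and then read off descents from $[u\cdot v](j)>[u\cdot v](j+1)$. Your treatment is in fact slightly more careful than the paper's at the boundary indices $j=0$ and $j=i_m$ and in justifying that $u$ fixes $\{1,\dots,i_m\}$ setwise, but the argument is the same.
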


\begin{proof}

 Consider

$$\pi = (\tau_{i_{1}}^{k_{1}} \cdot \tau_{i_{2}}^{k_{2}} \cdots \tau_{i_{m}}^{k_{m}}) \cdot \tau_{i_{m+1}}^{i_{m+1}} \quad, i_{m}\leq  i_{m+1}.$$

 $$u=\tau_{i_{1}}^{k_{1}} \cdot \tau_{i_{2}}^{k_{2}} \cdots \tau_{i_{m}}^{k_{m}} \quad u \in S_{i_{m}}.$$
$$des(u) = i_{j} \quad , k_{j} < 0.$$

$$v=\tau_{i_{m+1}}^{i_{m+1}}.$$

$$des(v) = \{0,1,\cdots , i_{m+1-1}\}.$$

Then by looking at the permutation presentation of the element $uv$:
$$[u\cdot v](j) = -u(j)   \quad 1\leqslant j \leqslant i_m, \quad\quad [u\cdot v](j) = -j  \quad i_{m}< j < i_{m+1}.$$

Hence the following holds:
\begin{itemize}
    \item For $1\leq j\leq i_ m$: $$j\in des(u\cdot v)\Longleftrightarrow [u\cdot v](j)>[u\cdot v](j+1)\Longleftrightarrow u(j)<u(j+1) \quad  (i.e.,\quad j\notin des(u)).$$
    \item For $i_m < j\leq i_m+1$: $$[u\cdot v](j)>[u\cdot v](j+1)\Longrightarrow j\in des(u\cdot v)).$$
    
\end{itemize}

\end{proof}

\begin{example}
The case $\pi=v \cdot u$:\\

$\bullet$ Consider $$\pi=\tau_{9}^{4}\cdot \tau_{10}^{4}= \tau_{8}^{-8} \cdot (\tau_{8}^{-8} \cdot \tau_{9}^{4}\cdot \tau_{10}^{4}) .$$

The permutation presentation of $\pi$ as follow:

$$v= \tau_{8}^{-8}=
[-1; ~-2; ~-3; ~-4; ~-5; ~-6; ~-7; ~-8; ~9; ~10] 
$$
$$u=\tau_{8}^{-8} \cdot \tau_{9}^{4}\cdot \tau_{10}^{4}$$
$$=
[-1; ~-2; ~-3; ~-4; ~-5; ~-6; ~-7; ~-8; ~9; ~10]
\cdot
[-6; ~-7; ~-8; ~-9; ~1; ~2; ~3; ~4; ~5; ~10] 
\cdot $$
$$
[~-7; ~-8; ~-9; ~-10; ~1; ~2; ~3; ~4; ~5; ~6] 
$$
$$=
[2; ~3; ~4; ~5; ~7; ~8; ~9; ~10; ~1; ~6].
$$
Hence, $$\pi=v\cdot u=\tau_{8}^{-8} \cdot (\tau_{8}^{-8} \cdot \tau_{9}^{4}\cdot \tau_{10}^{4})=\tau_{9}^{4}\cdot \tau_{10}^{4}$$
$$=
[-1; ~-2; ~-3; ~-4; ~-5; ~-6; ~-7; ~-8; ~9; ~10]\cdot 
[2; ~3; ~4; ~5; ~7; ~8; ~9; ~10; ~1; ~6]
$$
$$=
[-2; ~-3; ~-4; ~-5; ~-7; ~-8; ~-9; ~-10; ~1; ~6]
$$
Then :\\
$$des(v)=des(\tau_{8}^{-8}) = \{0,1,2,...,7\}, \quad des(u)=des(\tau_{8}^{-8} \cdot \tau_{9}^{4}\cdot \tau_{10}^{4} ) = \{8\}.$$

Hence, we conclude that in the permutation presentation of $u$, we get:

$$u(1)<u(2)<...<u(8)\quad \quad u(8)>u(9)\quad\quad  u(9) < u(10).$$

And in the permutation presentation of $v$ , $$v(i)= -i \quad \text{for} \quad 1<i<8.$$

Hence, in the permutation presentation of $v\cdot u$ :
\begin{itemize}
    \item  For every $1 < i < 8  \Longleftrightarrow [v\cdot u](i) = -u(i).$
    \item  For $i=9,10 \Longleftrightarrow [v\cdot u](i)=u(i).$
    \item  Therefore  $u(8)<0 \quad and \quad u(9)>0$, which applies $[v\cdot u](8) < [v\cdot u](9).$
    \item Furthermore the following holds:
     $$0 > [v\cdot u](1) > [v\cdot u](2) > [v\cdot u](3) > \cdots > [v\cdot u](8),\quad [v\cdot u](9) < [v\cdot u](10).$$
\end{itemize}

Hence, the descent set of $\pi$ as follow:
$$des(\pi)=\{0,1,2,3,4,5,6,7\}. $$

\end{example}

\begin{example}

The case $\pi=u\cdot v$ :\\

$\bullet$Let $\pi$ be $$(\tau_{3}^{-1}\cdot \tau_{4}^{-2} \cdot \tau_{5}^{3})\cdot \tau_5^{-5}.$$
 $u= \tau_{3}^{-1}\cdot \tau_{4}^{-2} \cdot \tau_{5}^{3}\in \dot{S}_{5}$ and $v=\tau_5^{-5}$.
\\

The permutation presentation of $\pi$ as follow:
 
 $$u=\tau_{3}^{-1}\cdot \tau_{4}^{-2} \cdot \tau_{5}^{3}$$
 $$=
[2; ~3; ~-1; ~4; ~5]
\cdot
[3; ~4; ~-1; ~-2; ~5]
\cdot 
[-3; ~-4; ~-5; ~1; ~2]$$
$$=
[1; ~3; ~5; ~4; ~2].
$$

$$v=\tau_5^{-5}=
[-1; ~-2; ~-3; ~-4; ~-5].
$$

Hence, 
$$\pi=u\cdot v=(\tau_{3}^{-1}\cdot \tau_{4}^{-2} \cdot \tau_{5}^{3})\cdot \tau_5^{-5}$$
$$
[-1; ~-3; ~-5; ~-4; ~-2].
$$

 Then by Proposition \ref{des-bn-sn}:
  $$des(u)=\{3,4\}$$
Hence we conclude:
    $$u(1) < u(2) < u(3)\quad\quad   u(3) > u(4) > u(5).$$

Now, we turn to the element $v$. We know that:
$$v(i) = -i \quad \text{for every} \quad  1<i<5 .$$
 Then, Then by Proposition \ref{des-bn-sn}: $$ des(v) = \{0,1,2,3,4\}.$$

Therefore, in the permutation presentation of  $u\cdot v$, We have:\\
$$[uv](i) = -u(i) \quad 1<i<5.$$

Hence, we have in the permutation presentation of $u\cdot v$,

$$(0 > [u\cdot v](1) >[u\cdot v](2)  > [u\cdot v](3)), \quad ([u\cdot v](3) < [u\cdot v](4) < [u\cdot v](5) ).$$
 Then the descent set of $\pi=u\cdot v$  is as follow:

$$des(u\cdot v)= \{0,1,2\}\quad \text{where}\quad des(u\cdot v) = \{0,1,2,3,...,k-1\} - des(u).$$
\end{example}

\section{Conclusions and future plans}

This paper is a natural continuation of the paper \cite{S1}, where  there was introduced a quite interesting generalization of the fundamental theorem for abelian groups to two important and very elementary families of non-abelian Coxeter groups, the $I$-type (dihedral groups), and the $A$-type (symmetric groups).
There were introduced canonical forms, with very interesting exchange laws, and quite interesting properties concerning the Coxeter lengths of the elements. In this paper we generalized the results of \cite{S1} for the $B$-type Coxeter groups, namely $B_n$. The results of the paper  motivate us for further generalizations of the $OGS$ and the arising properties from it for more families of  Coxeter and generalized Coxeter groups, which have an importance in the classification
of Lie algebras and the Lie-type simple groups, and in other fields of mathematics, such as
algebraic geometry for classification of fundamental groups of Galois covers of surfaces \cite{alst}.
The next natural step of generalization is defining a generalized standard OGS canonical form for $D$-type Coxeter group, which similarly to the $B$-type Coxeter groups, have presentations
as signed permutations. Furthermore, it is interesting to find generalization of the $OGS$ to the affine classical families $\tilde{A}_n$, $\tilde{B}_n$, $\tilde{C}_n$, and $\tilde{D}_n$, and also to other generalizations of the mentioned Coxeter groups, as the complex reflection groups $G(r, p, n)$ \cite{ST} or the generalized affine classical groups, the definition of which is described in \cite{rtv}, \cite{ast}.

\end{document}